\DeclareRobustCommand{\greektext}{%
  \fontencoding{LGR}\selectfont\def\encodingdefault{LGR}}
\DeclareRobustCommand{\textgreek}[1]{\leavevmode{\greektext #1}}
\numberwithin{equation}{section}
\numberwithin{figure}{section}
\theoremstyle{plain}
\newtheorem{thm}{\protect\theoremname}
\theoremstyle{remark}
\newtheorem{rem}[thm]{\protect\remarkname}
\theoremstyle{plain}
\newtheorem{prop}[thm]{\protect\propositionname}
\theoremstyle{plain}
\newtheorem{lem}[thm]{\protect\lemmaname}
\theoremstyle{plain}
\newtheorem{cor}[thm]{\protect\corollaryname}
\providecommand{\lemmaname}{Lemma}
\providecommand{\remarkname}{Remark}
\providecommand{\theoremname}{Theorem}
\providecommand{\corollaryname}{Corollary}
\providecommand{\lemmaname}{Lemma}
\providecommand{\propositionname}{Proposition}
\providecommand{\remarkname}{Remark}
\providecommand{\theoremname}{Theorem}
\begin{document}
\title{Local and global analyticity for $\mu$-Camassa-Holm equations}
\author{Hideshi Yamane}
\curraddr{{\small{}{}Department of Mathematical Sciences, Kwansei Gakuin University
}\\
 {\small{}{}Gakuen 2-1 Sanda, Hyogo 669-1337, Japan}}
\email{{\small{}{}yamane@kwansei.ac.jp}}
\thanks{This work was partially supported by JSPS KAKENHI Grant Number 26400127.}
\begin{abstract}
We solve Cauchy problems for some $\mu$-Camassa-Holm integro-partial
differential equations in the analytic category. The equations to
be considered are $\mu$CH of Khesin-Lenells-Misio\l{}ek, $\mu$DP
of Lenells-Misio\l{}ek-Ti\u{g}lay, the higher-order $\mu$CH of Wang-Li-Qiao
and the non-quasilinear version of Qu-Fu-Liu. We prove the unique
local solvability of the Cauchy problems and provide an estimate of
the lifespan of the solutions. Moreover, we show the existence of
a unique global-in-time analytic solution for $\mu$CH, $\mu$DP and
the higher-order $\mu$CH. The present work is the first result of
such a global nature for these equations.  \\
 AMS subject classification: 35R09, 35A01, 35A10, 35G25
\end{abstract}

\keywords{Ovsyannikov theorem for nonlocal equations, Camassa-Holm equations,
analytic Cauchy problem, global solvability}

\maketitle
\markboth{Hideshi Yamane}{$\mu$-Camassa-Holm equations}\tableofcontents

\section*{Introduction}

We consider a functional equation 
\[
\mu(u_{t})-u_{txx}=-2\mu(u)u_{x}+2u_{x}u_{xx}+uu_{xxx},\;x\in S^{1}=\mathbb{R}/\mathbb{Z},
\]
called the $\mu$-Camassa-Holm equation ($\mu$CH), and its variants
in the complex-analytic or real-analytic category. Here $\mu(u)=\int_{S^{1}}u\,dx$.
Multiplying by the inverse of $\mu-\partial_{x}^{2}$, we get an evolution
equation
\[
u_{t}+uu_{x}+\partial_{x}(\mu-\partial_{x}^{2})^{-1}\left[2\mu(u)u+\frac{1}{2}u_{x}^{2}\right]=0.
\]
It motivates one to consider Cauchy problems, not only in Sobolev
spaces but also in spaces of analytic functions. In the latter case,
the solutions are  analytic in both $t$ and $x$. Recall that solutions
to the KdV equation can be analytic in $x$ but not in $t$. This
is because it is `not Kowalevskian', which means the first-order derivative
$u_{t}$ equals a quantity involving higher derivatives. Our evolution
equation mentioned above is `Kowalevskian' in a generalized sense
due to the presence of the negative order pseudodifferential operator
$(\mu-\partial_{x}^{2})^{-1}$.

Because of the nonlocal nature of $(\mu-\partial_{x}^{2})^{-1}$,
our considerations are always global in $x$. So we will work with
the Sobolev space $H^{m}(S^{1})$ or $A(\delta)$, the space of analytic
functions on $S^{1}\ni x$ which admits analytic continuation to $|y|<\delta$.
On the other hand, we can work either locally or globally in $t$.
Our local study will be given in Section \ref{sec:Local-in-time solutions}
and Appendix. It is based on the Ovsyannikov theorem used in \cite{BHP}
and \cite{HM}. It is a kind of abstract Cauchy-Kowalevsky theorem
about a scale of Banach spaces and enables us to obtain local-in-time
solutions which are analytic in both $t$ and $x$. Our global study
will be given in Sections \ref{sec:Global-in-time-solutions} and
\ref{sec:Global-in-time-solutions: higher}. Since global-in-time
solutions are known to exist in Sobolev spaces, what remains to be
done here is to prove their analyticity. We carry out this task by
using the method of \cite{Kato-Masuda} following \cite{BHPglobal}.
In the final part of the proof, we quote a result in \cite{Komatsu},
which gives a useful criterion of real analyticity.

Now we explain some background and history. In the course of it, we
will introduce some equations that will be studied in the present
paper. All the equations mentioned below are integrable in some sense.

The original Camassa-Holm equation
\begin{equation}
u_{t}-u_{txx}=-3uu_{x}+2u_{x}u_{xx}+uu_{xxx}\label{eq:CHoriginal}
\end{equation}
was introduced in \cite{CamassaHolm} (shallow water wave) and in
\cite{FokasFuchssteiner} (hereditary symmetries). It is known to
be completely integrable and admits peaked soliton (peakon) solutions.
The Cauchy problem for this equation can be formulated by introducing
a pseudodifferential operator. Indeed, \eqref{eq:CHoriginal} can
be written in the form 
\begin{equation}
u_{t}+uu_{x}+\partial_{x}(1-\partial_{x}^{2})^{-1}\left[u^{2}+\frac{1}{2}u_{x}^{2}\right]=0.\label{eq:CH}
\end{equation}
Since \eqref{eq:CH} is Kowalevskian in a generalized sense, it is
natural to solve this equation in the analytic setting as in the classical
Cauchy-Kowalevsky theorem. In \cite{BHP}, the authors introduced
a kind of Sobolev spaces with exponential weights consisting of holomorphic
functions in a strip of the type $|y|<\text{const.}$ Since these
spaces form a scale of Banach spaces, an Ovsyannikov type argument
can be applicable. It leads to the unique solvability and an estimate
of the lifespan of the solution in the periodic and non-periodic cases.

There are a lot of works about solutions of the Cauchy problem for
\eqref{eq:CHoriginal} or \eqref{eq:CH} in Sobolev spaces. See the
references in \cite{BHP} and \cite{ZY}. Local well-posedness and
blowup mechanism are major topics. In \cite{BHP}, the local unique
solvability in the analytic category was proved. Moreover, there is
a result about the global-in-time solvability in \cite{BHPglobal}.
Indeed, according to \cite{BHPglobal}, if the initial value is in
$H^{s}(\mathbb{R}),s>5/2$, and the McKean quantity $m_{0}=(1-\partial_{x}^{2})u_{0}$
does not change sign, then the Cauchy problem for (a generalization
of) \eqref{eq:CH} has a unique global-in-time solution $u\in\mathcal{C}([0,\infty);H^{s}(\mathbb{R}))$.
See \cite{McKean} for the necessity and sufficiency of the no-change-of-sign
condition. Moreover, in \cite{BHPglobal}, it is proved that this
solution is analytic in both $t$ and $x$ if the initial value is
in the space of analytic functions mentioned above. In the present
paper, we follow \cite{BHP} for local theory and the analyticity
part of \cite{BHPglobal} for global theory.

In \cite{KLM}, the $\mu$-version of \eqref{eq:CHoriginal}, namely
\begin{equation}
\mu(u_{t})-u_{txx}=-2\mu(u)u_{x}+2u_{x}u_{xx}+uu_{xxx},\;x\in S^{1}=\mathbb{R}/\mathbb{Z},\label{eq:muCH}
\end{equation}
was introduced. The authors call this equation $\mu$HS (HS is for
Hunter-Saxton), while it is called $\mu$CH in \cite{LMT}. We have
$\mu(u_{t})=0$, but we keep $\mu(u_{t})$ because this formulation
facilitates later calculation. The interest of \eqref{eq:muCH} lies,
for example, in the fact that it describes evolution of rotators in
liquid crystals with external magnetic field and self-interaction,
and it is related to the diffeomorphism groups of the circle with
a natural metric. Set 
\[
A(\varphi)=\mu(\varphi)-\varphi_{xx}.
\]
Then it is invertible for a suitable choice of function spaces and
commutes with $\partial_{x}$. The equation  \eqref{eq:muCH} can
be written in the following form (\cite[(5.1)]{KLM}): 
\begin{equation}
u_{t}+uu_{x}+\partial_{x}A^{-1}\left[2\mu(u)u+\frac{1}{2}u_{x}^{2}\right]=0.\label{eq:muCH2}
\end{equation}
In \cite{KLM}, the local well-posedness and the global existence
in Sobolev spaces is demonstrated. In the global problem, the $\mu$-McKean
quantity $(\mu-\partial_{x}^{2})u_{0}(x)$ is assumed to be free from
change of sign.

There are similar $\mu$-equations. In \cite[(5.3)]{LMT}, the following
equation, called $\mu$DP (DP is for Degasperis-Procesi), was introduced:
\begin{equation}
u_{t}+uu_{x}+\partial_{x}A^{-1}\left[3\mu(u)u\right]=0.\label{eq:muDP}
\end{equation}
The local well-posedness in $H^{s}(S^{1}),s>3/2$, and the global
existence in $H^{s}(S^{1}),s>3$, was proved in \cite{LMT}.

In \cite{CHK}, a family of higher-order Camassa-Holm equations depending
on $k=2,3,\dots$ was introduced. It is related to diffeomorphisms
of the unit circle. In \cite{WLQ}, the $\mu$-version of the $k=2$
case 
\begin{align}
 & u_{t}+uu_{x}+\partial_{x}B^{-1}\left[2\mu(u)u+\frac{1}{2}u_{x}^{2}-3u_{x}u_{xxx}-\frac{7}{2}u_{xx}^{2}\right]=0,\label{eq:higher}\\
 & B(\varphi)=\mu(\varphi)+(-\partial_{x}^{2}+\partial_{x}^{4})\varphi,\nonumber 
\end{align}
was formulated. The local well-posedness and the global existence
in $H^{s}(S^{1}),s>7/2$, was proved in \cite{WLQ}. Notice that the
no-change-of-sign condition is not imposed in this work. In \cite{WLQ2},
a different version 
\begin{equation}
u_{t}+uu_{x}+\partial_{x}A^{-2}\left[2\mu(u)u+\frac{1}{2}u_{x}^{2}-3u_{x}u_{xxx}-\frac{7}{2}u_{xx}^{2}\right]=0\label{eq:higher-1}
\end{equation}
is studied. The global existence is proved there.

The modified $\mu$-Camassa-Holm equation (modified $\mu$CH) with
non-quasilinear terms 
\begin{equation}
u_{t}+2\mu(u)uu_{x}-\frac{1}{3}u_{x}^{3}+\partial_{x}A^{-1}\left[2\mu^{2}(u)u+\mu(u)u_{x}^{2}+\gamma u\right]+\frac{1}{3}\mu(u_{x}^{3})=0\label{eq:modifiedmuCH2}
\end{equation}
was introduced in \cite[(3.2)]{QFL} ($\gamma=0$) and \cite[(2.7)]{QFL},
\cite{FuYing}. The global existence in Sobolev spaces remains open,
as opposed to that of the other equations mentioned above, so it is
not possible to show the global existence of analytic solutions by
using the method in the present article. Because of this exceptional
nature of the equation, we treat it in Appendix separately from the
others. Notice that local theory for \eqref{eq:modifiedmuCH2} is
developed in a certain space of analytic functions as well as in Besov
spaces in \cite{FuYing}.

The outline of this article is as follows. In Section \ref{sec:Function-spaces},
we introduce some function spaces and operators and investigate their
properties. In Section \ref{sec:Local-in-time solutions}, we prove
the local existence of analytic solutions of \eqref{eq:muCH2}, \eqref{eq:muDP},
\eqref{eq:higher} and \eqref{eq:higher-1}. These results are used
in the proofs of the global existence theorems in Sections \ref{sec:Global-in-time-solutions}
and \ref{sec:Global-in-time-solutions: higher} about \eqref{eq:muCH2},
\eqref{eq:muDP} and \eqref{eq:higher}. In Appendix, the local existence
of analytic solutions of \eqref{eq:modifiedmuCH2} is proved.

\section{Function spaces and operators\label{sec:Function-spaces}}

In the present paper, $L^{2}(S^{1})$ consists of \emph{real-valued}
square-integrable functions on $S^{1}=\mathbb{R}/\mathbb{Z}$. We
sometimes identify an element of it with a function on $\mathbb{R}$
with period 1. For a function on $S^{1}$, we set $\hat{\varphi}(k)=\int_{S^{1}}\varphi(x)e^{-2k\pi ix}\,dx$.
We introduce a family of Hilbert spaces $G^{\delta,s}$ ($\delta\ge0,s\ge0$)
by 
\[
G^{\delta,s}=\left\{ \varphi\in L^{2}(S^{1});\,\|\varphi\|_{\delta,s}<\infty\right\} ,\;\|\varphi\|_{\delta,s}^{2}=\sum_{k\in\mathbb{Z}}\langle k\rangle^{2s}e^{4\pi\delta|k|}|\hat{\varphi}(k)|^{2},
\]
where $\langle k\rangle=(1+k^{2})^{1/2}$ (Japanese bracket). Notice
that our definition is not exactly the same as that in \cite{BHP,BHPpower}.
In particular, the base space is $\mathbb{T}=\mathbb{R}/(2\pi\mathbb{Z})$
in \cite{BHP,BHPpower}. It is easy to see that we have continuous
injections $G^{\delta,s}\to G^{\delta',s'}$ and $G^{\delta,s}\to G^{\delta,s'}$
if $0\le\delta'<\delta,0\le s'<s$. Their norms are 1. We have a continuous
injection $G^{\delta,s}\to G^{\delta',s'}$ under the weaker assumption
$0\le\delta'<\delta$. We recover the usual Sobolev spaces
\[
H^{s}=G^{0,s},\;H^{\infty}=\cap_{s\ge0}H^{s}
\]
and we set $\|\varphi\|_{s}=\|\varphi\|_{0,s}$. Notice that $\|\cdot\|_{0}$
is the $L^{2}$ norm. The corresponding inner product is denoted by
$\langle\cdot,\cdot\rangle_{0}$. Set $\Lambda^{2}=1-(2\pi)^{-2}\partial_{x}^{2}$.
Then we have 
\begin{equation}
\|\varphi\|_{2}^{2}=\|\Lambda^{2}\varphi\|_{0}^{2}=\|\varphi\|_{0}^{2}+\frac{1}{2\pi^{2}}\|\varphi'\|_{0}^{2}+\frac{1}{(2\pi)^{4}}\|\varphi''\|_{0}^{2},\label{eq:H2norm}
\end{equation}
because $\int_{S^{1}}\varphi\varphi''\,dx=-\int_{S^{1}}(\varphi')^{2}\,dx$.

When $\delta>0$, set 
\begin{align*}
S(\delta) & =\left\{ z=x+iy\in\mathbb{C};\,|y|<\delta)\right\} ,\\
A(\delta) & =\left\{ f\colon S^{1}=\mathbb{R}/\mathbb{Z}\to\mathbb{R};\,f\,\text{has an analytic continuation to }S(\delta)\right\} .
\end{align*}

\begin{rem}
If a function is analytic on $S^{1}$, then it belong to $A(\delta)$
for some $\delta>0$. See Proposition \ref{prop:analyticity} below.
Notice that an analogous statement does not hold true if $S^{1}$
is replaced with $\mathbb{R}$.
\end{rem}

We identify an element of $A(\delta)$ with its analytic continuation.
For $f\in A(\delta)$, set
\[
\|f\|_{(\sigma,s)}^{2}=\sum_{j=0}^{\infty}\frac{e^{4\pi j\sigma}}{j!^{2}}\|f^{(j)}\|_{s}^{2}\qquad(e^{2\pi\sigma}<\delta).
\]
This norm will be used in the global theory. Do not confuse $\|\cdot\|_{(\sigma,s)}^{2}$
with $\|\cdot\|_{\delta,s}$.
\begin{prop}
\label{prop:embed G A}For $\delta>0,s\ge0$, the space $G^{\delta,s}$
is continuously embedded in $A(\delta)$.
\end{prop}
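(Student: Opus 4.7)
The plan is to show that every $\varphi\in G^{\delta,s}$ admits an analytic continuation to the strip $S(\delta)$ given explicitly by its Fourier series, and that the convergence of this series is controlled locally uniformly by the $\|\cdot\|_{\delta,s}$-norm, which yields continuity of the inclusion (with $A(\delta)$ endowed with the Fréchet topology of uniform convergence on compact subsets of $S(\delta)$, or equivalently the sup-norm on each closed substrip $|y|\le\delta-\varepsilon$).

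First I would take $\varphi\in G^{\delta,s}$ and form the candidate extension
\[
F(z)=\sum_{k\in\mathbb{Z}}\hat\varphi(k)\,e^{2\pi i kz},\qquad z=x+iy\in S(\delta).
\]
Since $|e^{2\pi i k z}|=e^{-2\pi k y}\le e^{2\pi|k||y|}$, the key estimate is obtained by splitting off the weights present in the $G^{\delta,s}$-norm and applying Cauchy--Schwarz:
\[
\sum_{k}|\hat\varphi(k)|\,e^{2\pi|k||y|}
\le\Bigl(\sum_{k}\langle k\rangle^{2s}e^{4\pi\delta|k|}|\hat\varphi(k)|^{2}\Bigr)^{1/2}\Bigl(\sum_{k}\langle k\rangle^{-2s}e^{-4\pi(\delta-|y|)|k|}\Bigr)^{1/2}.
\]
The first factor is $\|\varphi\|_{\delta,s}$, and the second factor is a finite constant $C_{s}(\delta-|y|)$ whenever $|y|<\delta$, since the exponential decay dominates the (at worst $1$) polynomial factor for every $s\ge 0$.

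Next I would use this bound to conclude that $F$ is analytic on $S(\delta)$: for any closed substrip $\overline{S(\delta-\varepsilon)}$ with $\varepsilon>0$, the series defining $F$ converges uniformly there, each partial sum is entire, so $F$ is holomorphic on $S(\delta-\varepsilon)$; letting $\varepsilon\downarrow 0$ gives analyticity on all of $S(\delta)$. The restriction $F|_{\mathbb{R}}$ is $1$-periodic and coincides with $\varphi$ because the partial sums also converge to $\varphi$ in $L^{2}(S^{1})$, so $\varphi\in A(\delta)$. Finally, the estimate
\[
\sup_{|y|\le\delta-\varepsilon}|F(x+iy)|\le C_{s}(\varepsilon)\,\|\varphi\|_{\delta,s}
\]
gives the continuity of the inclusion $G^{\delta,s}\hookrightarrow A(\delta)$ in the topology of uniform convergence on compacta of $S(\delta)$.

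I do not foresee a serious obstacle beyond getting the Cauchy--Schwarz split correct; the only point to be careful about is that the tail sum $\sum_{k}\langle k\rangle^{-2s}e^{-4\pi(\delta-|y|)|k|}$ is handled by the exponential and therefore the argument works uniformly for all $s\ge 0$ (rather than needing $s>1/2$, as one would if the exponential weight were absent). Once that is checked, identifying the boundary value with $\varphi$ via $L^{2}$-convergence of the Fourier series completes the argument.
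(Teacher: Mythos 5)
Your proof is correct and follows essentially the same route as the paper: both define the continuation by the Fourier series and control it via Cauchy--Schwarz, splitting off the weight $\langle k\rangle^{s}e^{2\pi\delta|k|}$ so that the first factor is $\|\varphi\|_{\delta,s}$ and the second is a convergent sum for $|y|<\delta$. Your write-up is slightly more explicit about the identification of the boundary value and the Fr\'echet topology on $A(\delta)$, but the underlying estimate is identical to the paper's.
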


\begin{proof}
Assume $\varphi\in G^{\delta,s}$. The series $\varphi(z)=\sum\hat{\varphi}(k)e^{2k\pi iz}=\sum\hat{\varphi}(k)e^{-2k\pi y}e^{2k\pi ix}$
converges locally uniformly in $|y|<\delta$, because 
\begin{align*}
|\varphi(z)|^{2} & \le\sum\langle k\rangle^{2s}e^{4\pi\delta|k|}|\hat{\varphi}(k)|^{2}\times\sum\langle k\rangle^{-2s}e^{-4\pi\delta|k|}e^{-4k\pi y}\\
 & =\|\varphi\|_{\delta,s}^{2}\sum\langle k\rangle^{-2s}e^{-4\pi\delta|k|}e^{-4k\pi y}.
\end{align*}
Therefore $G^{\delta,s}\subset A(\delta)$ and this embedding is continuous
from $G^{\delta,s}$ to $A(\delta)$ with the topology (i) in Corollary
\ref{cor:Frechet}.
\end{proof}
\begin{rem}
\label{rem:Sobolev} Proposition \ref{prop:embed G A} is a variant
of the Sobolev embedding theorem: if $s>1/2$, then there is a continuous
embedding $H^{s}=G^{0,s}\to\mathcal{C}^{0}(S^{1})$ as is proved by
d
\[
|\varphi(x)|^{2}\le\left|\sum\hat{\varphi}(k)e^{2k\pi ix}\right|^{2}\le\sum\langle k\rangle^{2s}|\hat{\varphi}(k)|^{2}\times\sum\langle k\rangle^{-2s}|e^{2k\pi ix}|^{2}\le\|\varphi\|_{s}^{2}\sum\langle k\rangle^{-2s}.
\]
\end{rem}

\begin{prop}
\label{prop:analyticity}If $\varphi$ is a real-analytic function
on $S^{1}$, then there exists $\delta>0$ such that $\varphi\in G^{\delta,s}$
for any $s$. More precisely, if $\varphi\in A(\delta)$, then $\varphi\in G^{\delta',s}$
for any $\delta'\in]0,\delta[$ and any $s$.
\end{prop}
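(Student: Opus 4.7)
The plan is to reduce both assertions to a sharp Fourier-decay estimate obtained by shifting the contour of integration. For the first statement, I would invoke the fact that a real-analytic function on the compact circle $S^{1}$ extends holomorphically to a complex neighborhood of $S^{1}$; by compactness of $S^{1}$ this neighborhood contains some symmetric strip $S(\delta)$, so $\varphi\in A(\delta)$ for some $\delta>0$. Thus everything reduces to the second, more precise statement.

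Given $\varphi\in A(\delta)$ and $\delta'\in\,]0,\delta[$, I would fix an intermediate $\delta_{1}$ with $\delta'<\delta_{1}<\delta$. The set $\{x+iy:x\in[0,1],\ |y|\le\delta_{1}\}$ is a compact subset of $S(\delta)$, so
\[
M:=\sup_{|y|\le\delta_{1},\,x\in[0,1]}|\varphi(x+iy)|
\]
is finite. Applying Cauchy's theorem to the rectangle with horizontal sides on the real axis and at height $y=-\operatorname{sgn}(k)\delta_{1}$, the two vertical sides cancel by $1$-periodicity of the integrand $\varphi(z)e^{-2\pi ikz}$, and since $|e^{-2\pi ik(x-i\operatorname{sgn}(k)\delta_{1})}|=e^{-2\pi\delta_{1}|k|}$, I obtain the classical bound
\[
|\hat{\varphi}(k)|\le M\,e^{-2\pi\delta_{1}|k|},\qquad k\in\mathbb{Z}.
\]

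Plugging this into the definition of the $G^{\delta',s}$ norm yields, for every $s\ge0$,
\[
\|\varphi\|_{\delta',s}^{2}=\sum_{k\in\mathbb{Z}}\langle k\rangle^{2s}e^{4\pi\delta'|k|}|\hat{\varphi}(k)|^{2}\le M^{2}\sum_{k\in\mathbb{Z}}\langle k\rangle^{2s}e^{-4\pi(\delta_{1}-\delta')|k|}<\infty,
\]
because $\delta_{1}-\delta'>0$ and exponential decay dominates any polynomial weight $\langle k\rangle^{2s}$. The main (very mild) obstacle is the bookkeeping in the contour shift: one must choose the sign of the shift so that $e^{-2\pi ikz}$ contributes decay rather than growth, and one must invoke $1$-periodicity cleanly in order to discard the two vertical contributions. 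Every other step is routine.
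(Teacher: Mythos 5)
Your proof is correct and follows essentially the same route as the paper: both arguments shift the contour of the Fourier integral into the strip of analyticity to extract exponential decay of $\hat{\varphi}(k)$. The only (harmless) difference is that you shift to an intermediate height $\delta_{1}\in\,]\delta',\delta[$ and beat the weight $\langle k\rangle^{2s}e^{4\pi\delta'|k|}$ by the surplus decay $e^{-4\pi(\delta_{1}-\delta')|k|}$, whereas the paper shifts exactly to height $\delta'$ and instead invokes the rapid decay of the Fourier coefficients of the smooth boundary function $\varphi(x\pm i\delta')$.
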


\begin{proof}
(This proposition is given in \cite{BHP} without proof.) By the periodicity,
contour deformation gives
\[
\hat{\varphi}(k)=\int_{0}^{1}\varphi\left(x\pm\delta'i\right)\exp\left(-2k\pi i\left[x\pm\delta'i\right]\right)\,dx.
\]
Set 
\[
a_{k}=\int_{0}^{1}\varphi\left(x-\mathrm{sgn}(k)\delta'i\right)e^{-2k\pi ix}\,dx=e^{2\pi|k|\delta'}\hat{\varphi}(k),\;k\ne0.
\]
These are the Fourier coefficients of $\varphi(x\pm i\delta')$ and
we have $\sum_{k\in\mathbb{Z}\setminus\left\{ 0\right\} }\langle k\rangle^{2s}|a_{k}|^{2}<\infty$
for any $s$. Therefore the series $\sum_{k\in\mathbb{Z}}\langle k\rangle^{2s}e^{4\pi\delta'|k|}|\hat{\varphi}(k)|^{2}$
converges.
\end{proof}
\begin{prop}
\label{prop:multiplication} We have the following three estimates
about products of functions:

(i) Assume $s>1/2,\delta\ge0$. Then $G^{\delta,s}$ is closed under
pointwise multiplication and we have 
\[
\|\varphi\psi\|_{\delta,s}\le c_{s}\|\varphi\|_{\delta,s}\|\psi\|_{\delta,s},\quad c_{s}=\left[2(1+s^{2s})\sum_{k=0}^{\infty}\langle k\rangle^{-2s}\right].
\]

(ii) There exists a positive constant $d_{s}$ such that we have $\|\varphi\psi\|_{0}\le d_{s}\|\varphi\|_{0}\|\psi\|_{s}$
for any $\varphi\in H^{0}=L^{2}(S^{1})$ and any $\psi\in H^{s}\,(s\ge1)$.

(iii) There exists a constant $\gamma$ such that we have $\|\varphi\psi\|_{2}\le\gamma\left(\|\varphi\|_{1}\|\psi\|_{2}+\|\varphi\|_{2}\|\psi\|_{1}\right)$
for any $\varphi,\psi\in H^{2}$.
\end{prop}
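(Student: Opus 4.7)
My plan is to handle the three items in turn, each reducing to a bilinear estimate by elementary means. The essential inputs are Plancherel, the convolution formula $\widehat{\varphi\psi}(k)=\sum_{\ell}\hat{\varphi}(k-\ell)\hat{\psi}(\ell)$, and the one-dimensional embedding $H^{s}(S^{1})\hookrightarrow L^{\infty}(S^{1})$ for $s>1/2$ recorded in Remark \ref{rem:Sobolev}.

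For (i), the exponential weight is controlled by the sub-multiplicativity $e^{2\pi\delta|k|}\le e^{2\pi\delta|k-\ell|}e^{2\pi\delta|\ell|}$ that follows from $|k|\le|k-\ell|+|\ell|$. Introducing the nonnegative Fourier data $F(k)=e^{2\pi\delta|k|}|\hat{\varphi}(k)|$ and $G(k)=e^{2\pi\delta|k|}|\hat{\psi}(k)|$, this reduces (i) to an algebra inequality of the form $\sum_{k}\langle k\rangle^{2s}(F*G)(k)^{2}\le c_{s}^{2}\|\varphi\|_{\delta,s}^{2}\|\psi\|_{\delta,s}^{2}$. I would then apply a Peetre-type inequality $\langle k\rangle^{s}\le C(s)(\langle k-\ell\rangle^{s}+\langle\ell\rangle^{s})$, split the resulting sum into two symmetric pieces, apply Cauchy--Schwarz in $\ell$ in each piece, and use the convergence of $\sum\langle k\rangle^{-2s}$, valid precisely because $s>1/2$; Minkowski reassembly produces the desired inequality. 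Matching the precise constant $2(1+s^{2s})\sum_{k\ge 0}\langle k\rangle^{-2s}$ advertised in the statement requires the sharpened splitting $\langle k-\ell\rangle\gtrless s\langle\ell\rangle$ rather than the cruder $\langle k-\ell\rangle\gtrless\langle\ell\rangle$ dichotomy.

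For (ii) the conclusion is immediate from Remark \ref{rem:Sobolev}: since $s\ge 1>1/2$, there exists $d_{s}$ with $\|\psi\|_{L^{\infty}}\le d_{s}\|\psi\|_{s}$, and therefore $\|\varphi\psi\|_{0}\le\|\varphi\|_{0}\|\psi\|_{L^{\infty}}\le d_{s}\|\varphi\|_{0}\|\psi\|_{s}$.

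For (iii) I would combine \eqref{eq:H2norm} with the Leibniz rule. Writing $\|\varphi\psi\|_{2}=\|\Lambda^{2}(\varphi\psi)\|_{0}$ and $(\varphi\psi)''=\varphi''\psi+2\varphi'\psi'+\varphi\psi''$, the estimate reduces to bounding each of $\|\varphi\psi\|_{0}$, $\|\varphi''\psi\|_{0}$, $\|\varphi'\psi'\|_{0}$, $\|\varphi\psi''\|_{0}$ by a combination of $\|\varphi\|_{1}\|\psi\|_{2}$ and $\|\varphi\|_{2}\|\psi\|_{1}$. For each term I would place the lower-order factor in $L^{\infty}$ via $H^{1}\hookrightarrow L^{\infty}$: for example $\|\varphi''\psi\|_{0}\le\|\varphi''\|_{0}\|\psi\|_{L^{\infty}}\le C\|\varphi\|_{2}\|\psi\|_{1}$ and $\|\varphi'\psi'\|_{0}\le\|\varphi'\|_{0}\|\psi'\|_{L^{\infty}}\le C\|\varphi\|_{1}\|\psi\|_{2}$, the latter using that $\psi\in H^{2}$ gives $\psi'\in H^{1}\hookrightarrow L^{\infty}$. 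The main technical obstacle is the constant in (i); the general scheme produces a constant of the correct shape, but pinning down the prefactor $1+s^{2s}$ requires a refined form of the Peetre inequality. Parts (ii) and (iii) are essentially immediate from the Sobolev embedding on $S^{1}$.
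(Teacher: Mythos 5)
Your proof is correct and follows essentially the same route as the paper: parts (ii) and (iii) are exactly the intended combination of the Leibniz rule with the embedding $H^{1}\hookrightarrow\mathcal{C}^{0}(S^{1})$, and for (i) the paper simply cites \cite{BHPpower}, whose argument is the convolution--Peetre--Cauchy--Schwarz scheme you sketch. The only loose end is that you do not actually derive the stated prefactor $2(1+s^{2s})$, but the paper does not derive it in-line either.
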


\begin{proof}
The proof of (i) is given in \cite{BHPpower}. Although different
formulations are used in \cite{BHPpower} and the present article,
the constant $c_{s}$ is the same. This is because $\widehat{\varphi\psi}(k)=\sum_{n}\hat{\varphi}(n)\hat{\psi}(k-n)$
holds in either situations. The difference of the base spaces are
offset by that of conventions, namely the presence or the absence
of the $1/(2\pi)$ factor in the definition of the Fourier coefficients.
The other three estimates follow from the boundedness of $H^{1}\hookrightarrow C^{0}(S^{1})$
(and the Leibniz rule).
\end{proof}
\begin{rem}
A better estimate than Proposition \ref{prop:multiplication} (iii)
can be found in \cite{Kato-Ponce}, which implies that $\|\cdot\|_{1}$
can be replaced with $\|\cdot\|_{L^{\infty}}$. It is used in \cite{WLQ},
but (iii) is good enough in the present paper.
\end{rem}

\begin{prop}
\label{prop:partial_x}(\cite[Lemma 2]{BHP}) If $0\le\delta'<\delta,s\ge0$
and $\varphi\in G^{\delta,s}$, then 
\begin{align*}
\|\varphi_{x}\|_{\delta',s} & \le\frac{e^{-1}}{\delta-\delta'}\|\varphi\|_{\delta,s},\\
\|\varphi_{x}\|_{\delta,s} & \le2\pi\|\varphi\|_{\delta,s+1}.
\end{align*}
A derivative can be estimated in two ways: `larger $\delta$' or `larger
$s$'.
\end{prop}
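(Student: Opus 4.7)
The plan is to work on the Fourier side, using the explicit formula
$\|\varphi\|_{\delta,s}^2=\sum_{k\in\mathbb{Z}}\langle k\rangle^{2s}e^{4\pi\delta|k|}|\hat{\varphi}(k)|^2$ and the identity $\widehat{\varphi_x}(k)=2k\pi i\,\hat{\varphi}(k)$.

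For the second inequality I would simply pull a factor of $(2\pi|k|)^2$ into the weight: since $|k|\le\langle k\rangle$,
\begin{equation*}
\|\varphi_x\|_{\delta,s}^2=\sum_k\langle k\rangle^{2s}e^{4\pi\delta|k|}(2\pi|k|)^2|\hat{\varphi}(k)|^2\le (2\pi)^2\sum_k\langle k\rangle^{2(s+1)}e^{4\pi\delta|k|}|\hat{\varphi}(k)|^2=(2\pi)^2\|\varphi\|_{\delta,s+1}^2,
\end{equation*}
and take square roots.

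For the first inequality the idea is to absorb the missing exponential weight $e^{4\pi\delta'|k|}$ into $e^{4\pi\delta|k|}$ at the cost of $e^{-4\pi(\delta-\delta')|k|}$, and then let this small exponential swallow the growth factor $(2\pi|k|)^2$. Concretely,
\begin{equation*}
\|\varphi_x\|_{\delta',s}^2=\sum_k\langle k\rangle^{2s}e^{4\pi\delta|k|}|\hat{\varphi}(k)|^2\,\bigl[2\pi|k|\,e^{-2\pi(\delta-\delta')|k|}\bigr]^2,
\end{equation*}
so the whole task reduces to bounding the bracket uniformly in $k$. Substituting $t=2\pi(\delta-\delta')|k|\ge0$, the bracket becomes $(\delta-\delta')^{-1}\,t e^{-t}$, and elementary calculus gives $\max_{t\ge0}te^{-t}=e^{-1}$, attained at $t=1$. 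Thus the bracket is at most $e^{-1}/(\delta-\delta')$, and factoring this pointwise bound out of the sum yields exactly $\|\varphi_x\|_{\delta',s}\le e^{-1}(\delta-\delta')^{-1}\|\varphi\|_{\delta,s}$.

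There is no real obstacle here; the only substantive point is noticing that the sharp constant in the first estimate comes from the one-variable optimization of $te^{-t}$, which is why the factor $e^{-1}$ (as opposed to merely some unspecified constant) appears. Both estimates follow in a few lines from Plancherel and this elementary bound, and the argument is independent of $s\ge 0$ since the $\langle k\rangle^{2s}$ weight is untouched throughout.
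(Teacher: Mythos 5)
Your proof is correct and is essentially identical to the paper's: both work on the Fourier side and reduce the first inequality to the elementary bound $te^{-t}\le e^{-1}$ (the paper phrases it as maximizing $f(x)=x^{2}e^{-2(\delta-\delta')x}$, which is the square of your bracket), while the second inequality is the same weight comparison $(2\pi|k|)^{2}\le(2\pi)^{2}\langle k\rangle^{2}$. Nothing to add.
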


\begin{proof}
The second inequality is easy to prove. We give a proof of the first
in order to clarify that the assumption $\delta\le1$ in \cite{BHP,BHPpower}
is superfluous. The present author thinks that the authors of \cite{BHP,BHPpower}
wrote $\delta\le1$ not because they really needed it for the omitted
proof but for the sole reason that they were interested only in $0<\delta\le1$.

Set $f(x)=x^{2}\exp[2(-\delta+\delta')x],\,x\ge0$. We have 
\begin{align*}
\|\varphi\|_{\delta,s}^{2} & =\sum_{k\in\mathbb{Z}}\langle k\rangle^{2s}e^{4\pi\delta|k|}|\hat{\varphi}(k)|^{2},\\
\|\varphi_{x}\|_{\delta',s}^{2} & =\sum_{k\in\mathbb{Z}}\langle k\rangle^{2s}e^{4\pi\delta'|k|}(2k\pi)^{2}|\hat{\varphi}(k)|^{2}\\
 & =\sum_{k\in\mathbb{Z}}\langle k\rangle^{2s}e^{4\pi\delta|k|}f(2\pi|k|)|\hat{\varphi}(k)|^{2}.
\end{align*}
Since $0\le f(x)\le(e^{-1}/(\delta-\delta'))^{2}$, we get $\|\varphi_{x}\|_{\delta',s}^{2}\le(e^{-1}/(\delta-\delta'))^{2}\|\varphi\|_{\delta,s}^{2}$.
\end{proof}
We set $A(\varphi)=\mu(\varphi)-\varphi_{xx}$, $B(\varphi)=\mu(\varphi)+(-\partial_{x}^{2}+\partial_{x}^{4})\varphi$.
For $\varphi=\sum_{k\in\mathbb{Z}}a_{k}e^{2k\pi ix}\in G^{\delta,s}$,
we have 
\begin{align*}
\mu(\varphi) & =a_{0},\\
A(\varphi) & =a_{0}+\sum_{k\ne0}(2\pi k)^{2}a_{k}e^{2k\pi ix},\\
A^{-1}(\varphi) & =a_{0}+\sum_{k\ne0}\dfrac{a_{k}}{(2\pi k)^{2}}e^{2k\pi ix},\\
B(\varphi) & =a_{0}+\sum_{k\ne0}[(2\pi k)^{2}+(2\pi k)^{4}]a_{k}e^{2k\pi ix},\\
B^{-1}(\varphi) & =a_{0}+\sum_{k\ne0}\dfrac{a_{k}}{(2\pi k)^{2}+(2\pi k)^{4}}e^{2k\pi ix}.
\end{align*}
It follows that $A$ is a bounded operator from $G^{\delta,s+2}$
to $G^{\delta,s}$. It is a bijection and its inverse $A^{-1}$ is
a pseudodifferential operator of order $-2$. Therefore it is bounded
from $G^{\delta,s}$ to $G^{\delta,s+2}$. On the other hand, $B^{-1}$
is a pseudodifferential operator of order $-4$. Notice that $A^{-1}$
and $B^{-1}$ commute with $\partial_{x}$.\\
The following proposition is easy to prove.
\begin{prop}
\label{prop:smoothing}We have 
\begin{align*}
 & \left|\mu(\varphi)\right|\le\|\varphi\|_{\delta,s},\\
 & \|\partial_{x}^{j}A^{-1}(\varphi)\|_{\delta,s+2-j}\le\|\varphi\|_{\delta,s}\;(0\le j\le2),\\
 & \|\partial_{x}^{j}B^{-1}(\varphi)\|_{\delta,s+4-j}\le\|\varphi\|_{\delta,s}\;(0\le j\le4)
\end{align*}
 for $\varphi\in G^{\delta,s}$.
\end{prop}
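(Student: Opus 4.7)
The plan is to reduce every estimate to a pointwise bound on Fourier multipliers via Parseval's identity, using the explicit symbols of $\mu$, $A^{-1}$, and $B^{-1}$ displayed just above the proposition. The first inequality requires no work: since $\mu(\varphi)=\hat\varphi(0)$ and $\langle 0\rangle=1$, the quantity $|\mu(\varphi)|^{2}$ is exactly the $k=0$ term of $\|\varphi\|_{\delta,s}^{2}$.

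For the $A^{-1}$ bound, the displayed Fourier expansion shows that $\widehat{\partial_{x}^{j}A^{-1}\varphi}(k)=(2\pi i k)^{j}/(2\pi k)^{2}\cdot\hat\varphi(k)$ for $k\ne 0$, while at $k=0$ it equals $\hat\varphi(0)$ if $j=0$ and vanishes otherwise; in either case the $k=0$ contribution is dominated by the $k=0$ term of $\|\varphi\|_{\delta,s}^{2}$. It therefore suffices to verify, for $|k|\ge 1$ and $0\le j\le 2$, the pointwise multiplier inequality
\[
\langle k\rangle^{2(s+2-j)}(2\pi|k|)^{2(j-2)}\le\langle k\rangle^{2s},
\]
which after cancellation becomes $(\langle k\rangle/(2\pi|k|))^{2(2-j)}\le 1$. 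This follows from the elementary bound $\langle k\rangle=\sqrt{1+k^{2}}\le 2\pi|k|$ valid for $|k|\ge 1$.

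For $B^{-1}$ the corresponding Fourier multiplier at $k\ne 0$ is $(2\pi i k)^{j}/[(2\pi k)^{2}(1+(2\pi k)^{2})]$, and the required estimate reduces to
\[
\frac{\langle k\rangle^{2(4-j)}(2\pi|k|)^{2(j-2)}}{(1+(2\pi k)^{2})^{2}}\le 1\qquad(|k|\ge 1,\ 0\le j\le 4).
\]
The plan is to use the two elementary majorizations $\langle k\rangle^{2}\le 1+(2\pi k)^{2}$ and $(2\pi k)^{2}\le 1+(2\pi k)^{2}$, distributing the squared denominator into a product whose exponents match those in the numerator: for $2\le j\le 4$ write $(1+(2\pi k)^{2})^{2}\ge(2\pi|k|)^{2(j-2)}\langle k\rangle^{2(4-j)}$ directly, while for $0\le j<2$ first trade $(2\pi|k|)^{2(j-2)}$ for $\langle k\rangle^{2(j-2)}$ (again using $\langle k\rangle\le 2\pi|k|$ for $|k|\ge 1$) and then reduce to the case $j=2$. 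There is no real obstacle in this proposition; the only item requiring attention is the exponent bookkeeping in the $B^{-1}$ case, which the two majorizations handle uniformly across all five values of $j$.
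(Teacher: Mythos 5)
Your proof is correct: the paper states this proposition without proof (``easy to prove''), and the intended argument is exactly your Fourier-multiplier computation based on the symbols of $\mu$, $A^{-1}$ and $B^{-1}$ displayed just before the statement, with the elementary bounds $\langle k\rangle\le 2\pi|k|$ and $\langle k\rangle^{2},(2\pi k)^{2}\le 1+(2\pi k)^{2}$ for $|k|\ge 1$ handling all the exponent cases. Nothing further is needed.
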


If $\varphi\in G^{\delta,s}$, then by Propositions \ref{prop:partial_x}
and \ref{prop:smoothing}, we have the following estimates of the
`larger $\delta$, smaller $s$' type: 
\begin{align}
\|\partial_{x}A^{-1}(\varphi)\|_{\delta',s+1} & \le\frac{e^{-1}}{\delta-\delta'}\|\varphi\|_{\delta,s},\;0\le\delta'<\delta\le1,\label{eq:partialx A-1}\\
\|\partial_{x}B^{-1}(\varphi)\|_{\delta',s+3} & \le\frac{e^{-1}}{\delta-\delta'}\|\varphi\|_{\delta,s},\;0\le\delta'<\delta\le1.\label{eq:partialx B-1}
\end{align}
In these estimates, the left-hand sides can be replaced with $\|\partial_{x}A^{-1}(\varphi)\|_{\delta',s+2}$
and $\|\partial_{x}B^{-1}(\varphi)\|_{\delta',s+4}$ respectively,
but \eqref{eq:partialx A-1} and \eqref{eq:partialx B-1} are good
enough.

In later sections, we will use the following estimates repeatedly.
Let $R>0$ and $u_{0}\in G^{\delta,s+1}$ be given. If $\|u_{j}-u_{0}\|_{\delta,s+1}<R$
for $u_{j}\in G^{\delta,s+1}\subset G^{\delta,s}\,(j=1,2)$, then
we have 
\begin{align}
\|u_{j}\|_{\delta,s} & \le\|u_{j}\|_{\delta,s+1}\le\|u_{0}\|_{\delta,s+1}+R,\label{eq:ball1}\\
\|u_{1}+u_{2}\|_{\delta,s} & \le\|u_{1}+u_{2}\|_{\delta,s+1}\le2\left(\|u_{0}\|_{\delta,s+1}+R\right).\label{eq:ball2}
\end{align}

\begin{prop}
\label{prop:KatoMasudaLem2.2}(cf. \cite[Lem 2.2]{Kato-Masuda}) If
$f\in H^{\infty}$ satisfies $\|f\|_{(\sigma,s)}<\infty$ for some
$s\ge0$ and for any $\sigma$ with $e^{2\pi\sigma}<\delta$, then
$f\in A(\delta)$.

Conversely, $f\in A(\delta)$ implies $\|f\|_{(\sigma,s)}<\infty$
for any $\sigma,s$ with $e^{2\pi\sigma}<\delta$ and $s\ge0$.
\end{prop}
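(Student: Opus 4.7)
The plan is to prove both implications by working with the Fourier expansion $f=\sum_{k\in\mathbb{Z}}\hat{f}(k)e^{2k\pi ix}$, which gives the closed-form identity
\[
\|f\|_{(\sigma,s)}^{2}=\sum_{k\in\mathbb{Z}}\langle k\rangle^{2s}|\hat{f}(k)|^{2}\sum_{j=0}^{\infty}\frac{(2\pi|k|e^{2\pi\sigma})^{2j}}{(j!)^{2}}
\]
because $f^{(j)}$ has Fourier coefficients $(2\pi ik)^{j}\hat{f}(k)$. Both parts then reduce to comparing the inner sum with the simple exponential $e^{4\pi|k|e^{2\pi\sigma}}$ that appears in the weight of $G^{\delta,s}$.

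For the converse direction ($f\in A(\delta)\Rightarrow\|f\|_{(\sigma,s)}<\infty$), I would use the elementary upper bound $\sum_{j}x^{2j}/(j!)^{2}\le(\sum_{j}x^{j}/j!)^{2}=e^{2x}$ valid for $x\ge 0$. Applied with $x=2\pi|k|e^{2\pi\sigma}$, this yields
\[
\|f\|_{(\sigma,s)}^{2}\le\sum_{k}\langle k\rangle^{2s}|\hat{f}(k)|^{2}e^{4\pi|k|e^{2\pi\sigma}}.
\]
Choosing any $\delta'$ with $e^{2\pi\sigma}\le\delta'<\delta$, the right-hand side is dominated by $\|f\|_{\delta',s}^{2}$, which is finite by Proposition \ref{prop:analyticity}.

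For the forward direction, the hypothesis gives the term-by-term bound $\|f^{(j)}\|_{s}\le\|f\|_{(\sigma,s)}\,j!\,e^{-2\pi j\sigma}$ for every admissible $\sigma$. Since $s\ge 0$ implies $\|g\|_{0}\le\|g\|_{s}$, applying this inequality to $f^{(j)}$ and $f^{(j+1)}$ and using $\|f^{(j)}\|_{1}^{2}=\|f^{(j)}\|_{0}^{2}+(2\pi)^{-2}\|f^{(j+1)}\|_{0}^{2}$ yields a bound of the form $\|f^{(j)}\|_{1}\le C(j+1)!\,e^{-2\pi j\sigma}$. Because $1>1/2$, the Sobolev embedding of Remark \ref{rem:Sobolev} then gives the pointwise bound $|f^{(j)}(x_{0})|/j!\le C'(j+1)e^{-2\pi j\sigma}$ uniformly in $x_{0}\in S^{1}$. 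Hence the Taylor series of $f$ at every $x_{0}\in S^{1}$ has radius of convergence at least $e^{2\pi\sigma}$, and since $\sigma$ can be chosen with $e^{2\pi\sigma}$ arbitrarily close to $\delta$, the radius is at least $\delta$ at every point. Periodicity plus uniqueness of analytic continuation lets these local disks be glued into a single holomorphic extension to the strip $|y|<\delta$, so $f\in A(\delta)$.

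The main obstacle is the forward direction when $s\le 1/2$: the bound on $\|f^{(j)}\|_{s}$ does not directly control $\|f^{(j)}\|_{\infty}$ through Sobolev embedding. The remedy above is to trade one derivative between neighboring $j$'s to upgrade to an $H^{1}$ bound, at the cost of the harmless polynomial factor $j+1$, which is absorbed by shrinking $\sigma$ infinitesimally. Everything else is routine Fourier and Sobolev calculus, with the formula (\ref{eq:H2norm})-style identity doing the heavy lifting for the derivative trade.
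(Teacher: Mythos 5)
Your proof is correct. The forward direction is essentially the paper's own argument: reduce to $s=0$ via $\|\cdot\|_{0}\le\|\cdot\|_{s}$, trade one derivative between consecutive $j$'s to control the sup norm of $f^{(j)}$ (the paper phrases this as bounding $\|f^{(j)}\|_{0}+\|f^{(j+1)}\|_{0}$ and invoking $\|\varphi\|_{\mathcal{C}^{0}}\le C(\|\varphi\|_{0}+\|\varphi'\|_{0})$, which is the same derivative trade as your $H^{1}$ identity), and then sum the Taylor series about real points; the polynomial factor $j+1$ is harmless for the root test exactly as you say. Your converse, however, takes a genuinely different and arguably cleaner route. The paper works on the function side: it applies Goursat's formula on horizontal contours to get $|f^{(j)}(x)|\le\pi^{-1}j!\,S/y^{j+1}$, integrates the squared bound in $y$ over $[0,\delta']$ to obtain \eqref{eq:f2sigma0} for $s=0$, and then needs the separate estimate \eqref{eq:fsigmas} (an index shift plus binomial expansion of $\langle k\rangle^{2s}$) to pass to $s>0$. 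You instead work entirely on the Fourier side: interchanging the two sums (legitimate by Tonelli, all terms being nonnegative) gives the closed form $\|f\|_{(\sigma,s)}^{2}=\sum_{k}\langle k\rangle^{2s}|\hat{f}(k)|^{2}\sum_{j}(2\pi|k|e^{2\pi\sigma})^{2j}/(j!)^{2}$, and the elementary bound $\sum_{j}x^{2j}/(j!)^{2}\le e^{2x}$ reduces everything to the finiteness of $\|f\|_{\delta',s}$ for some $e^{2\pi\sigma}\le\delta'<\delta$, which is exactly Proposition \ref{prop:analyticity}. This handles all $s\ge0$ at once and delegates the contour work to an already-proved statement, at no cost since Proposition \ref{prop:analyticity} is available; the only point worth recording is that $f\in A(\delta)\subset H^{\infty}$ justifies $\widehat{f^{(j)}}(k)=(2\pi ik)^{j}\hat{f}(k)$, which is immediate.
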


\begin{proof}
In the proof of the first part, we may assume $s=0$. Let 
\[
M^{2}=\|f\|_{(\sigma,0)}^{2}=\sum_{j=0}^{\infty}(j!)^{-2}e^{4\pi j\sigma}\|f^{(j)}\|_{0}^{2}.
\]
Then $\|f^{(j)}\|_{0}\le j!e^{-2\pi j\sigma}M$ and 
\[
\|f^{(j)}\|_{0}+\|f^{(j+1)}\|_{0}\le M[j!e^{-2\pi j\sigma}+(j+1)!e^{-2\pi(j+1)\sigma}].
\]
Since there exists $C>0$ such that $\|\varphi\|_{\mathcal{\mathcal{C}}^{0}}\le C(\|\varphi\|_{0}+\|\varphi'\|_{0})$
for any $\varphi\in H^{1}$ by the Sobolev embedding, we have 
\begin{equation}
|f^{(j)}(x)|\le CM\left[j!e^{-2\pi j\sigma}+(j+1)!e^{-2\pi(j+1)\sigma}\right]\label{eq:f^(j)}
\end{equation}
for any $x$. Therefore $f(x+iy)=\sum_{j=0}^{\infty}f^{(j)}(x)(iy)^{j}/j!$
is holomorphic in $|y|<e^{2\pi\sigma}$ for any $\sigma$ with $e^{2\pi\sigma}<\delta$.
Hence $f\in A(\delta)$.

Next, we show the second part. Assume $0<\delta'<\delta$. It is enough
to prove $\|f\|_{(\sigma,s)}^{2}<\infty$ for any $\sigma,s$ with
$e^{2\pi\sigma}<\delta'$. Let $S=\sup_{S(\delta')}|f|<\infty$. Set
$L_{\pm}=\left\{ t\pm iy;\,0\le t\le1\right\} $. If $0<|y|\le\delta'$,
the periodicity of $f$ and Goursat's formula yield
\[
f^{(j)}(x)=\frac{j!}{2\pi i}\left(\int_{L_{-}}-\int_{L_{+}}\right)\frac{f(\zeta)}{(x-\zeta)^{j+1}}\,d\zeta,\;0\le x\le1.
\]
 We have $|f^{(j)}(x)|\le\pi^{-1}j!S/y^{j+1}$. Then $y^{2(j+1)}\|f^{(j)}\|_{0}^{2}\le\pi^{-2}j!^{2}S^{2}$
in $|y|\le\delta'$. Integrating in $y\in[0,\delta']$, we get 
\[
(\delta')^{2j+2}\|f^{(j)}\|_{0}^{2}\le\pi^{-2}(2j+3)j!^{2}S^{2}.
\]
It follows that
\begin{equation}
\|f\|_{(\sigma,0)}^{2}=\sum_{j=0}^{\infty}(j!)^{-2}e^{4\pi j\sigma}\|f^{(j)}\|_{0}^{2}\le\sum_{j=0}^{\infty}\pi^{-2}(2j+3)(\delta')^{-(2j+2)}e^{4\pi j\sigma}S^{2}<\infty.\label{eq:f2sigma0}
\end{equation}
For $s>0$, we can prove that
\begin{equation}
\|f\|_{(\sigma,s)}\le\text{const.}\|f\|_{(\sigma',0)}<\infty,\label{eq:fsigmas}
\end{equation}
where $e^{2\pi\sigma}<e^{2\pi\sigma'}<\delta'$. To see this, we may
assume that $s$ is a positive integer in view of $\|f\|_{(\sigma,s_{1})}\le\|f\|_{(\sigma,s_{2})}\,(s_{1}\le s_{2})$.
For simplicity, we explain the case of $s=1$ only. (The general case
follows the same line of proof, the only additional tool being the
binomial expansion of powers of the Japanese bracket.) We have
\begin{align*}
\|f\|_{(\sigma,1)}^{2} & =\sum_{j=0}^{\infty}(j!)^{-2}e^{4\pi j\sigma}\|f^{(j)}\|_{1}^{2}=\sum_{j=0}^{\infty}(j!)^{-2}e^{4\pi j\sigma}\sum_{k\in\mathbb{Z}}(1+k^{2})(2\pi k)^{2j}|\hat{f}(k)|^{2}\\
 & =I_{0}+I_{1},
\end{align*}
where $I_{p}=\sum_{j=0}^{\infty}(j!)^{-2}e^{4\pi j\sigma}\sum_{k\in\mathbb{Z}}k^{2p}(2\pi k)^{2j}|\hat{f}(k)|^{2}\,(p=0,1)$.
Obviously $I_{0}=\|f\|_{(\sigma,0)}^{2}\le\|f\|_{(\sigma',0)}^{2}$.
On the other hand, setting $\ell=j+1$, we get
\begin{align*}
I_{1} & \le\sum_{\ell=1}^{\infty}\frac{\ell^{2}e^{-4\pi\sigma}e^{4\pi\ell(\sigma-\sigma')}}{\ell!^{2}}e^{4\pi\ell\sigma'}\sum_{k\in\mathbb{Z}}(2\pi k)^{2l}|\hat{f}(k)|^{2}\\
 & \le\sum_{\ell=0}^{\infty}\frac{\textrm{const.}}{\ell!^{2}}e^{4\pi\ell\sigma'}\sum_{k\in\mathbb{Z}}(2\pi k)^{2l}|\hat{f}(k)|^{2}=\sum_{\ell=0}^{\infty}\frac{\textrm{const.}}{\ell!^{2}}e^{4\pi\ell\sigma'}\|f^{(\ell)}\|_{0}^{2}=\textrm{const.}\|f\|_{(\sigma',0)}^{2}.
\end{align*}
The proof of the second part is over.
\end{proof}
\begin{lem}
If $\sigma<\sigma'$, we have 
\[
\|f\|_{(\sigma,s)}\le\mathrm{const.}\|f\|_{(\sigma',s')}
\]
for any $s,s'\ge0$, where the constant depends on $s,s',\sigma,\sigma'$.
\end{lem}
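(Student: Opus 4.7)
The plan is to split into two cases depending on whether $s\le s'$ or $s>s'$. The case $s\le s'$ is immediate: since $\|f^{(j)}\|_s\le\|f^{(j)}\|_{s'}$ and $e^{4\pi j\sigma}\le e^{4\pi j\sigma'}$, a term-by-term comparison of the defining series gives $\|f\|_{(\sigma,s)}\le\|f\|_{(\sigma',s')}$ with constant $1$. So the work is entirely in the case $s>s'$, where one must spend the gap $\sigma'-\sigma>0$ in the exponential weight to compensate the loss of $\langle k\rangle^{2(s-s')}$ in the Sobolev weight.

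For $s>s'$, I would pick a positive integer $p\ge s-s'$ and use the elementary estimate $\langle k\rangle^{2(s-s')}\le\langle k\rangle^{2p}\le C_{p}\bigl(1+(2\pi k)^{2p}\bigr)$. Substituting into $\|f^{(j)}\|_{s}^{2}=\sum_{k}\langle k\rangle^{2s}(2\pi k)^{2j}|\hat f(k)|^{2}$ and splitting the two terms yields the pointwise-in-$j$ bound
\[
\|f^{(j)}\|_{s}^{2}\le C_{p}\bigl(\|f^{(j)}\|_{s'}^{2}+\|f^{(j+p)}\|_{s'}^{2}\bigr).
\]
Plugging this into the definition of $\|f\|_{(\sigma,s)}^{2}$ gives two pieces. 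The first, $\sum_{j}(j!)^{-2}e^{4\pi j\sigma}\|f^{(j)}\|_{s'}^{2}=\|f\|_{(\sigma,s')}^{2}$, is bounded by $\|f\|_{(\sigma',s')}^{2}$ since $\sigma<\sigma'$.

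The second piece requires a reindexing $\ell=j+p$: using $\ell!/(\ell-p)!\le\ell^{p}$, its coefficient becomes $\ell^{2p}e^{4\pi(\ell-p)\sigma}(\ell!)^{-2}$, and the key inequality is
\[
\ell^{2p}e^{4\pi\ell\sigma}\le\Bigl(\sup_{\ell\ge0}\ell^{2p}e^{-4\pi\ell(\sigma'-\sigma)}\Bigr)\,e^{4\pi\ell\sigma'},
\]
whose supremum is finite because $\sigma'-\sigma>0$. This gives the desired bound by $\|f\|_{(\sigma',s')}^{2}$. The whole argument is a monotone/reindexing manipulation of the defining Fourier series, and the only mildly technical step is absorbing the polynomial factor $\ell^{2p}$ into the gain $e^{-4\pi\ell(\sigma'-\sigma)}$; there is no real analytic obstacle beyond keeping the bookkeeping straight.
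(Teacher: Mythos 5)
Your proof is correct and is essentially the paper's argument: the paper disposes of the lemma by citing its inequality \eqref{eq:fsigmas}, whose proof (in Proposition \ref{prop:KatoMasudaLem2.2}) uses exactly your mechanism --- expand the Japanese-bracket weight, reindex $\ell=j+p$, and absorb the resulting polynomial factor $\ell^{2p}$ into the exponential gain $e^{-4\pi\ell(\sigma'-\sigma)}$. The only cosmetic difference is that the paper reduces everything to the $s'=0$ norm and then uses monotonicity in $s'$, whereas you compare $s$ to $s'$ directly after splitting off the trivial case $s\le s'$.
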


\begin{proof}
This estimate follows from \eqref{eq:fsigmas} immediately.
\end{proof}
\begin{cor}
\label{cor:Frechet}The following four families of norms on $A(\delta)$
determine the same topology as a Fr\'echet space.

(i) $\sup_{z\in S(\delta')}|f(z)|\,(0<\delta'<\delta),\qquad$ (ii)
$\|\cdot\|_{(\sigma,s)}\,(e^{2\pi\sigma}<\delta,s\ge0),$

(iii) $\|\cdot\|_{(\sigma,2)}\,(e^{2\pi\sigma}<\delta),\qquad$(iv)
$\|\cdot\|_{(\sigma,0)}\,(e^{2\pi\sigma}<\delta)$.\\
With this topology, $A(\delta)$ is continuously embedded in $H^{\infty}(S^{1})=\cap_{s\ge0}H^{s}(S^{1})$.
\end{cor}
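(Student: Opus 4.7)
My plan is to establish the four family-equivalences in pairs, since the heavy lifting has already been done in Proposition \ref{prop:KatoMasudaLem2.2} and the lemma immediately preceding the corollary; nothing really new is needed beyond a careful bookkeeping of parameters. The family (i) generates the standard Fréchet topology of uniform convergence on compact subsets of $S(\delta)$ (the compact subsets of $S(\delta)$ can be exhausted by the closed strips $\overline{S(\delta')}$ with $\delta'\uparrow\delta$), so the goal reduces to showing that each of (ii), (iii), (iv) generates the same topology, and that this topology dominates all the $H^s$-norms.

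For the equivalence of (ii), (iii), (iv), one direction is trivial: whenever $e^{2\pi\sigma}<\delta$, the definition of $\|\cdot\|_{(\sigma,s)}$ is monotone in $s$, so
\[
\|f\|_{(\sigma,0)} \le \|f\|_{(\sigma,2)} \le \|f\|_{(\sigma,s)}\qquad(s\ge 2).
\]
Conversely, the lemma just proved gives, for $\sigma<\sigma'$ with $e^{2\pi\sigma'}<\delta$ and arbitrary $s\ge 0$,
\[
\|f\|_{(\sigma,s)} \le \mathrm{const.}\,\|f\|_{(\sigma',0)}.
\]
Pairing each seminorm in (ii) with a slightly larger $\sigma'$ used in (iv) shows that (iv) dominates (ii), and hence (iii) as well. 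Thus the three families generate identical Fréchet topologies.

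For the comparison between (i) and (iv), both directions are implicit in the proof of Proposition \ref{prop:KatoMasudaLem2.2}, and I would just make them quantitative. The second half of that proof (Goursat contour deformation on the periodic strip) already yields, for any $0<\delta'<\delta$ and any $\sigma$ with $e^{2\pi\sigma}<\delta'$,
\[
\|f\|_{(\sigma,0)}^{2} \le \sum_{j=0}^{\infty}\pi^{-2}(2j+3)(\delta')^{-(2j+2)}e^{4\pi j\sigma}\,\bigl(\sup_{S(\delta')}|f|\bigr)^{2},
\]
so (i) dominates (iv). For the reverse direction, given $e^{2\pi\sigma}<\delta$ and any $\delta'<e^{2\pi\sigma}$, the Sobolev-type bound \eqref{eq:f^(j)} together with the Taylor series $f(x+iy)=\sum f^{(j)}(x)(iy)^{j}/j!$ yields
\[
\sup_{z\in S(\delta')}|f(z)| \le CM\sum_{j=0}^{\infty}\bigl[(\delta')^{j}e^{-2\pi j\sigma}+(j+1)(\delta')^{j}e^{-2\pi(j+1)\sigma}\bigr],
\]
where $M=\|f\|_{(\sigma,0)}$; the geometric series converges because $\delta'e^{-2\pi\sigma}<1$. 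Exhausting all $\delta'<\delta$ by strip widths of the form $\delta'<e^{2\pi\sigma}<\delta$ shows that (iv) dominates (i).

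Finally, for the embedding into $H^{\infty}(S^{1})$, for any $s\ge 0$ pick $\sigma>0$ with $e^{2\pi\sigma}<\delta$; the $j=0$ term of the defining series gives
\[
\|f\|_{s} \le \|f\|_{(\sigma,s)},
\]
so each Sobolev seminorm is dominated by a member of family (ii). Hence the inclusion $A(\delta)\hookrightarrow H^{\infty}(S^{1})$ is continuous. The only subtlety I anticipate is choosing, in the (iv)$\Rightarrow$(i) direction, the auxiliary parameters $\delta'<e^{2\pi\sigma}<\delta$ so that the geometric sum converges uniformly as $\delta'$ runs through an exhausting sequence, but this is routine once one fixes $\sigma$ slightly above $(2\pi)^{-1}\log\delta'$.
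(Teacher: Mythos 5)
Your proposal is correct and follows essentially the same route as the paper: both rest on the three estimates \eqref{eq:fsigmas}, \eqref{eq:f^(j)} and \eqref{eq:f2sigma0} from Proposition \ref{prop:KatoMasudaLem2.2}, combined with the trivial monotonicity in $s$ and the $j=0$ term for the $H^{\infty}$ embedding. (One cosmetic slip: in the last step you ask for $\sigma>0$ with $e^{2\pi\sigma}<\delta$, which need not exist when $\delta\le1$; the sign restriction is unnecessary since $\|f\|_{s}\le\|f\|_{(\sigma,s)}$ holds for every $\sigma$.)
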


\begin{proof}
It is trivial that (ii) is stronger (not weaker) than (iv) and that
(iii) is between (ii) and (iv). On the other hand, \eqref{eq:fsigmas}
implies (iv) is stronger than (ii).

The estimate \eqref{eq:f^(j)} and the Taylor expansion imply that
(iv) is stronger than (i). On the other hand, \eqref{eq:f2sigma0}
implies (i) is stronger than (iv). 
\end{proof}
\begin{prop}
\label{prop:KatoMasudaLem2.4}(cf. \cite[Lemma 2.4]{Kato-Masuda})
Let $f_{n}\in A(\delta)\,(n=0,1,2,\dots)$ be a sequence with $\|f_{n}\|_{(\sigma,s)}$
bounded, where $e^{2\pi\sigma}<\delta$, and assume $f_{\infty}\in A(\delta)$.
If $f_{n}\to f_{\infty}$ in $H^{\infty}$ as $n\to\infty$, then
$\|f_{n}\|_{(\sigma',s)}\to f_{\infty}$ for each $\sigma'<\sigma$.
\end{prop}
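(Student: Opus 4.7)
The statement's conclusion, as printed, appears to have a typo; the natural reading is that $\|f_{n}-f_{\infty}\|_{(\sigma',s)}\to 0$ as $n\to\infty$, i.e.\ convergence in the $(\sigma',s)$-norm. I will proceed under this interpretation.

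My plan is the classical ``head + tail'' splitting at a level $N$ to be chosen, applied to
\[
\|f_{n}-f_{\infty}\|_{(\sigma',s)}^{2}=\sum_{j=0}^{N}\frac{e^{4\pi j\sigma'}}{j!^{2}}\|f_{n}^{(j)}-f_{\infty}^{(j)}\|_{s}^{2}+\sum_{j>N}\frac{e^{4\pi j\sigma'}}{j!^{2}}\|f_{n}^{(j)}-f_{\infty}^{(j)}\|_{s}^{2}.
\]
The key algebraic identity underlying everything is the trivial rewriting $e^{4\pi j\sigma'}=e^{-4\pi j(\sigma-\sigma')}e^{4\pi j\sigma}$, which converts the loss from $\sigma$ to $\sigma'$ into a geometric factor $q^{j}$ with $q=e^{-4\pi(\sigma-\sigma')}\in(0,1)$.

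For the tail I would apply $(a-b)^{2}\le 2a^{2}+2b^{2}$ term-by-term and then use the hypothesis $\|f_{n}\|_{(\sigma,s)}\le M$ together with the fact that $\|f_{\infty}\|_{(\sigma,s)}<\infty$ (which is available because $f_{\infty}\in A(\delta)$ and $e^{2\pi\sigma}<\delta$, via the second half of Proposition \ref{prop:KatoMasudaLem2.2}). This gives
\[
\frac{e^{4\pi j\sigma'}}{j!^{2}}\|f_{n}^{(j)}-f_{\infty}^{(j)}\|_{s}^{2}\le 2q^{j}\bigl(\|f_{n}\|_{(\sigma,s)}^{2}+\|f_{\infty}\|_{(\sigma,s)}^{2}\bigr)\le 2q^{j}\bigl(M^{2}+\|f_{\infty}\|_{(\sigma,s)}^{2}\bigr),
\]
so the tail is bounded, uniformly in $n$, by a convergent geometric remainder. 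Given $\varepsilon>0$, choose $N$ so this uniform tail bound is $<\varepsilon/2$.

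For the head, with $N$ now fixed, I use that $H^{\infty}$-convergence gives $\|f_{n}^{(j)}-f_{\infty}^{(j)}\|_{s}\le \|f_{n}-f_{\infty}\|_{s+j}\to 0$ for each $j=0,1,\dots,N$, so this finite sum (with finitely many $j$-dependent weights $e^{4\pi j\sigma'}/j!^{2}$) tends to $0$ as $n\to\infty$. Pick $n_{0}$ so it is $<\varepsilon/2$ for $n\ge n_{0}$, and conclude $\|f_{n}-f_{\infty}\|_{(\sigma',s)}^{2}<\varepsilon$.

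There is essentially no obstacle; the only mildly delicate point is ensuring an \emph{$n$-independent} tail bound, which is precisely what the uniform hypothesis $\|f_{n}\|_{(\sigma,s)}\le M$ and the finiteness of $\|f_{\infty}\|_{(\sigma,s)}$ are there to supply. Without the strict inequality $\sigma'<\sigma$ one would lose the geometric decay factor $q^{j}$, and the argument would break; this is the sole reason the loss from $\sigma$ to $\sigma'$ is needed.
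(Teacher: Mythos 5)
Your proof is correct and is essentially the paper's argument: the author invokes the (sum version of the) dominated convergence theorem with dominating sequence $M^{2}e^{4\pi(\sigma'-\sigma)j}$, which is exactly your head-plus-tail splitting made implicit, and you correctly read the statement's typo as $\|f_{n}-f_{\infty}\|_{(\sigma',s)}\to0$ (the paper reduces to $f_{\infty}=0$, which likewise tacitly uses $\|f_{\infty}\|_{(\sigma,s)}<\infty$ from Proposition \ref{prop:KatoMasudaLem2.2}). The only cosmetic slip is that $\|f_{n}^{(j)}-f_{\infty}^{(j)}\|_{s}\le\|f_{n}-f_{\infty}\|_{s+j}$ should carry a factor $(2\pi)^{j}$ in this paper's normalization, which is harmless since the head is a finite sum over fixed $j\le N$.
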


\begin{proof}
We may assume $f_{\infty}=0$. Let $M^{2}=\sup_{n}\|f_{n}\|_{(\sigma,s)}^{2}$.
Then $j!^{-2}e^{4\pi j\sigma'}\|f_{n}^{(j)}\|_{s}^{2}\le Me^{4\pi(\sigma'-\sigma)j}$.
Since $\lim_{n\to\infty}\|f_{n}^{(j)}\|_{s}=0$ for each $j$ and
$\sum_{j\ge0}M^{2}e^{4\pi(\sigma'-\sigma)j}<\infty$, we can apply
(the sum version of ) Lebesgue's dominated convergence theorem to
$\|f_{n}\|_{(\sigma',s)}^{2}=\sum_{j\ge0}j!^{-2}e^{4\pi j\sigma'}\|f_{n}^{(j)}\|_{s}^{2}\,(n=0,1,2,\dots)$.
\end{proof}

\section{Local-in-time solutions\label{sec:Local-in-time solutions}}

\subsection{Autonomous Ovsyannikov theorem}

We recall some basic facts about the autonomous Ovsyannikov theorem.
Among many versions, we adopt the one in \cite{BHP,BHPpower}. Let
$\left\{ X_{\delta},\|\cdot\|_{\delta}\right\} _{0<\delta\le1}$ be
a (decreasing) scale of Banach spaces, i.e. each $X_{\delta}$ is
a Banach space and $X_{\delta}\subset X_{\delta'},\|\cdot\|_{\delta'}\le\|\cdot\|_{\delta}$
for any $0<\delta'<\delta\le1$. (For $s$ fixed, $\left\{ G^{\delta,s},\|\cdot\|_{\delta,s}\right\} _{0<\delta\le1}$
is a scale of Banach spaces.) Assume that $F\colon X_{\delta}\to X_{\delta'}$
is a mapping satisfying the following conditions.

(a) For any $u_{0}\in X_{1}$ and $R>0$, there exist $L=L(u_{0},R)>0,M=M(u_{0},R)>0$
such that we have 
\begin{align}
\|F(u_{0})\|_{\delta} & \le\frac{M}{1-\delta}\label{eq:Ov Fu0}
\end{align}
if $0<\delta<1$ and 
\begin{equation}
\|F(u)-F(v)\|_{\delta'}\le\frac{L}{\delta-\delta'}\|u-v\|_{\delta}\label{eq:Ov Lipschitz}
\end{equation}
if $0<\delta'<\delta\le1$ and $u,v\in X_{\delta}$ satisfies $\|u-u_{0}\|_{\delta}<R,\|v-u_{0}\|_{\delta}<R$.

(b) If $u(t)$ is holomorphic on the disk $D(0,a(1-\delta))=\left\{ t\in\mathbb{C}\colon|t|<a(1-\delta)\right\} $
with values in $X_{\delta}$ for $a>0,0<\delta<1$ satisfying $\sup_{|t|<a(1-\delta)}\|u(t)-u_{0}\|_{\delta}<R$,
then the composite function $F(u(t))$ is a holomorphic function on
$D(0,a(1-\delta))$ with values in $X_{\delta'}$ for any $0<\delta'<\delta$.

The autonomous Ovsyannikov theorem below is our main tool. For the
proof, see \cite{BHP}.
\begin{thm}
\label{thm:Ov}Assume that the mapping $F$ satisfies the conditions
(a) and (b). For any $u_{0}\in X_{1}$ and $R>0$, set 
\begin{equation}
T=\frac{R}{16LR+8M}.\label{eq:T}
\end{equation}
Then, for any $\delta\in]0,1[$, the Cauchy problem 
\begin{equation}
\frac{du}{dt}=F(u),\;u(0)=u_{0}\label{eq:Ov CP}
\end{equation}
has a unique holomorphic solution $u(t)$ in the disk $D(0,T(1-\delta))$
with values in $X_{\delta}$ satisfying
\[
\sup_{|t|<T(1-\delta)}\|u(t)-u_{0}\|_{\delta}<R.
\]
\end{thm}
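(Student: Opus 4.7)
The plan is to construct the solution via Picard iteration in the scale $\{X_\delta\}$, sacrificing a small amount of the parameter $\delta$ at each step in order to absorb the singular factor $1/(\delta-\delta')$ coming from the Lipschitz estimate \eqref{eq:Ov Lipschitz}. Define the iterates
\[
U_0(t)\equiv u_0,\qquad U_{n+1}(t)=u_0+\int_0^t F(U_n(s))\,ds.
\]
For a fixed target $\delta\in(0,1)$ I interpolate by a strictly decreasing sequence $\delta=\delta_{\infty}<\cdots<\delta_{n+1}<\delta_n<\cdots<\delta_0=1$ whose consecutive gaps $\delta_n-\delta_{n+1}$ are comparable to $(1-\delta)/(n+1)^2$. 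The idea is that at step $n$ the iterate $U_n(t)$ should take values in $X_{\delta_n}$ on the disk $D(0,T(1-\delta))$, and the Lipschitz factor $1/(\delta_n-\delta_{n+1})\sim (n+1)^2/(1-\delta)$ should be exactly matched by the smallness of $T$.

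First, I prove by induction on $n$ that $U_n(t)$ is holomorphic on $D(0,T(1-\delta))$ with values in $X_{\delta_n}$, and that $\|U_n(t)-u_0\|_{\delta_n}<R$ throughout this disk. The base case uses \eqref{eq:Ov Fu0}; the induction step uses condition (b) to make sense of the composition $F(U_n(\cdot))$ as a holomorphic $X_{\delta_{n+1}}$-valued function, then bounds $\int_0^t F(U_n(s))\,ds$ by writing $F(U_n)=F(u_0)+[F(U_n)-F(u_0)]$ and invoking \eqref{eq:Ov Fu0} and \eqref{eq:Ov Lipschitz}. Next, I estimate the consecutive differences $W_n(t):=U_{n+1}(t)-U_n(t)$: applying \eqref{eq:Ov Lipschitz} between the scales $\delta_n$ and $\delta_{n+1}$ and using the holomorphy to estimate the integral in $t$ by $T(1-\delta)$ times the supremum, I obtain
\[
\sup_{|t|<T(1-\delta)}\|W_n(t)\|_{\delta_{n+1}}\le \frac{LT(1-\delta)}{\delta_n-\delta_{n+1}}\sup_{|t|<T(1-\delta)}\|W_{n-1}(t)\|_{\delta_n}.
\]
With the chosen spacing, the prefactor is bounded by a constant multiple of $LT(n+1)^2$, and iterating from the base estimate on $W_0$ (which in turn rests on \eqref{eq:Ov Fu0}) yields a product of $(n+1)^2$-type factors that, thanks to Stirling-like comparisons and the choice of $T$ in \eqref{eq:T}, is summable. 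Hence $\sum W_n$ converges in $X_\delta$ uniformly on $D(0,T(1-\delta))$ to a holomorphic limit $u(t)$.

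To finish existence, I pass to the limit in the recursion: condition (b) together with the uniform bound $\|U_n(t)-u_0\|_{\delta_n}<R$ permits exchanging $F(U_n(t))\to F(u(t))$ in $X_{\delta'}$ for any $\delta'<\delta$, so $u(t)=u_0+\int_0^t F(u(s))\,ds$ holds in $X_{\delta'}$ and \eqref{eq:Ov CP} follows by differentiation. For uniqueness, if $u$ and $\tilde u$ are two such solutions, $z:=u-\tilde u$ vanishes at $t=0$ and satisfies an integral identity to which \eqref{eq:Ov Lipschitz}, iterated along another descending sequence of scales, yields a Gronwall-type contraction forcing $z\equiv 0$ on $D(0,T(1-\delta))$.

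The main obstacle is the combinatorial bookkeeping: selecting the auxiliary sequence $\delta_n$ so that the divergent factor $1/(\delta_n-\delta_{n+1})$ at each step is compensated by $T(1-\delta)$ and by the geometric smallness inherited from previous steps, and doing so cleanly enough to produce the explicit constant $R/(16LR+8M)$ appearing in \eqref{eq:T}. This is the classical ``loss of scale'' juggling act behind all abstract Cauchy--Kowalevsky theorems; pinning down the numerical constants is the delicate part and I would not attempt to reproduce the arithmetic in a sketch.
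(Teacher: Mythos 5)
The paper does not actually prove Theorem \ref{thm:Ov}; it defers to \cite{BHP}. Measured against that (standard) proof, your outline contains a genuine and fatal gap: Picard iteration along a \emph{fixed} descending sequence of scales does not converge. With gaps $\delta_{k}-\delta_{k+1}=c(1-\delta)/(k+1)^{2}$, iterating your displayed inequality gives
\[
\sup_{|t|<T(1-\delta)}\|W_{n}(t)\|_{\delta_{n+1}}\le\Bigl(\prod_{k=1}^{n}\frac{LT(1-\delta)}{\delta_{k}-\delta_{k+1}}\Bigr)\sup_{|t|<T(1-\delta)}\|W_{0}(t)\|_{\delta_{1}},
\]
and the product equals $(LT/c)^{n}$ times a quantity of order $(n!)^{2}$, which defeats every geometric factor: the series $\sum_{n}W_{n}$ diverges for every $T>0$ and every $t\ne0$. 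Keeping the sharper factor $|t|^{n}/n!$ from the iterated time integration does not help (you gain one factorial against two), and no other fixed sequence works either: since $\sum_{k}(\delta_{k}-\delta_{k+1})\le1-\delta$, the AM--GM inequality gives $\prod_{k<n}(\delta_{k}-\delta_{k+1})\le((1-\delta)/n)^{n}$, and an infinite summable sequence must have gaps decaying faster than $1/k$, which forces super-factorial growth of the product of reciprocal gaps. This is exactly the obstruction that makes the Ovsyannikov--Nirenberg--Nishida theorem nontrivial; no ``Stirling-like comparison'' rescues the bookkeeping.

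The repair used in \cite{BHP} (and in Nishida's and Treves' treatments) is to make the loss of scale \emph{adaptive} rather than fixed: one proves by induction a bound of the form $\|u_{n}(t)-u_{n-1}(t)\|_{\delta}\le C\,a^{n}|t|^{n}(1-\delta)^{-n}$ holding \emph{simultaneously for all} $\delta\in\,]0,1[$ and $|t|<T(1-\delta)$, and in the induction step applies \eqref{eq:Ov Lipschitz} between $\delta$ and $\delta'=\delta+(1-\delta)/n$. The gap $(1-\delta)/n$ costs a factor $n$, the integration of $s^{n-1}$ gains a factor $1/n$, and $(1-\delta')^{-(n-1)}\le e(1-\delta)^{-(n-1)}$, so the induction closes with $a$ proportional to $eL$; there is no telescoping sequence of scales at all, because the hypothesis is quantified over all $\delta$ at once. (Equivalently, one can run a contraction argument in Nirenberg's weighted norm $\sup_{\delta,t}\bigl(1-|t|/(T(1-\delta))\bigr)\|u(t)-u_{0}\|_{\delta}$.) Your limit passage and your uniqueness sketch are fine in spirit, but uniqueness must be run with the same adaptive device for the same reason. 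Finally, the explicit constant $T=R/(16LR+8M)$ emerges from tracking constants in this adaptive induction; it cannot be produced by the fixed-sequence scheme, which never converges.
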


\subsection{$\mu$CH and $\mu$DP equations}

First we consider the analytic Cauchy problem for the $\mu$CH equation
 \eqref{eq:muCH2}, namely. 
\begin{equation}
\begin{cases}
u_{t}+uu_{x}+\partial_{x}A^{-1}\left[2\mu(u)u+\frac{1}{2}u_{x}^{2}\right]=0,\\
u(0,x)=u_{0}(x).
\end{cases}\label{eq:muCH IVP}
\end{equation}

\begin{thm}
\label{thm:muCH IVP}Let $s>1/2$. If $u_{0}\in G^{1,s+1}$, then
there exists a positive time $T=T(u_{0},s)$ such that for every $\delta\in]0,1[$,
the Cauchy problem  \eqref{eq:muCH IVP} has a unique solution which
is a holomorphic function valued in $G^{\delta,s+1}$ in the disk
$D(0,T(1-\delta))$. Furthermore, the analytic lifespan $T$ satisfies
\[
T=\frac{\mathrm{const.}}{\|u_{0}\|_{1,s+1}}.
\]
\end{thm}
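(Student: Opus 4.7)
The plan is to apply the autonomous Ovsyannikov theorem (Theorem \ref{thm:Ov}) with the scale $X_\delta = G^{\delta,s+1}$ and the nonlinearity
\[
F(u) = -u u_x - \partial_x A^{-1}\!\left[2\mu(u)u + \tfrac{1}{2}u_x^2\right].
\]
The two transport-type terms play quite different roles: for $uu_x$ we will have to pay a factor $(\delta-\delta')^{-1}$ via Proposition \ref{prop:partial_x} because there is no smoothing to absorb the derivative, whereas for the $\partial_x A^{-1}[\,\cdot\,]$ term the smoothing built into Proposition \ref{prop:smoothing} (and \eqref{eq:partialx A-1}) allows us to keep the same $\delta$. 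Any residual $O(1)$ piece is then harmlessly bounded by $1/(\delta-\delta')$ since $\delta-\delta'\le 1$.

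First I would verify the bound \eqref{eq:Ov Fu0}. Using the algebra estimate Proposition \ref{prop:multiplication}(i) (valid because $s+1>1/2$), together with $\|(u_0)_x\|_{\delta,s+1}\le e^{-1}(1-\delta)^{-1}\|u_0\|_{1,s+1}$ from Proposition \ref{prop:partial_x}, one gets
\[
\|u_0 (u_0)_x\|_{\delta,s+1}\le \frac{c_{s+1}e^{-1}}{1-\delta}\|u_0\|_{1,s+1}^2.
\]
For the nonlocal part, Proposition \ref{prop:smoothing} with $j=1$ gives $\|\partial_x A^{-1}\varphi\|_{\delta,s+1}\le\|\varphi\|_{\delta,s}$, and the algebra property on $G^{\delta,s}$ plus $|\mu(u_0)|\le\|u_0\|_{1,s}$ yields $\|2\mu(u_0)u_0+\tfrac12 (u_0)_x^2\|_{\delta,s}\le C_s(\|u_0\|_{1,s+1}+\|u_0\|_{1,s+1}^2)$. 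Combining gives $M=M(u_0)$ with $M=O(\|u_0\|_{1,s+1}+\|u_0\|_{1,s+1}^2)$.

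Next comes the Lipschitz estimate \eqref{eq:Ov Lipschitz}. For $u,v\in X_\delta$ with $\|u-u_0\|_{\delta,s+1},\|v-u_0\|_{\delta,s+1}<R$ I split
\[
uu_x - vv_x = (u-v)u_x + v(u-v)_x,\qquad u_x^2-v_x^2 = (u_x+v_x)(u_x-v_x),
\]
\[
\mu(u)u-\mu(v)v = \mu(u-v)\,u + \mu(v)(u-v),
\]
and apply Propositions \ref{prop:multiplication}(i), \ref{prop:partial_x}, \ref{prop:smoothing} plus \eqref{eq:partialx A-1} term by term. The ball estimates \eqref{eq:ball1}--\eqref{eq:ball2} replace $\|u\|_{\delta,s+1},\|v\|_{\delta,s+1}$ by $\|u_0\|_{1,s+1}+R$. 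The upshot is a constant $L=L(u_0,R)$ of size $O(\|u_0\|_{1,s+1}+R)$ satisfying
\[
\|F(u)-F(v)\|_{\delta',s+1}\le \frac{L}{\delta-\delta'}\|u-v\|_{\delta,s+1}.
\]
Condition (b) (holomorphy on $D(0,a(1-\delta))$) follows because $F$ is a polynomial in $u$, $u_x$, $\mu(u)$, $\partial_x A^{-1}(\cdot)$ composed with bounded linear operators into the scale, so holomorphy in $t$ is preserved.

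Finally I would invoke Theorem \ref{thm:Ov}: choosing $R=\|u_0\|_{1,s+1}$ makes $L=O(\|u_0\|_{1,s+1})$ and $M=O(\|u_0\|_{1,s+1}^2)$, so the lifespan $T=R/(16LR+8M)$ is of the order $\|u_0\|_{1,s+1}/\|u_0\|_{1,s+1}^2 = \mathrm{const}/\|u_0\|_{1,s+1}$, exactly as asserted. The main bookkeeping obstacle is keeping track of which terms need the $(\delta-\delta')^{-1}$-type loss (only those where a bare $\partial_x$ is applied to one of $u,v$ without being absorbed by $A^{-1}$) and which do not; once this is done correctly, everything fits into the Ovsyannikov template.
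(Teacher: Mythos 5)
Your proposal is correct and follows essentially the same route as the paper: the autonomous Ovsyannikov theorem on the scale $G^{\delta,s+1}$, with the Lipschitz constant obtained term by term from Propositions \ref{prop:multiplication}, \ref{prop:partial_x}, \ref{prop:smoothing} and the ball estimates \eqref{eq:ball1}--\eqref{eq:ball2}, and the choice $R=\|u_{0}\|_{1,s+1}$ (the paper factors $uu_x=\tfrac12(u^2)_x$ and uses \eqref{eq:partialx A-1} where you keep the same $\delta$ and absorb the constant into $1/(\delta-\delta')$, but these are equivalent). One small slip: your intermediate bound $M=O(\|u_{0}\|_{1,s+1}+\|u_{0}\|_{1,s+1}^{2})$ should read $M=O(\|u_{0}\|_{1,s+1}^{2})$ -- both terms $\mu(u_{0})u_{0}$ and $(u_{0})_{x}^{2}$ are quadratic -- and indeed the purely quadratic bound is what your final paragraph uses and what is needed to get $T=\mathrm{const.}/\|u_{0}\|_{1,s+1}$ uniformly in the size of the data.
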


\begin{proof}
Assume $\|u-u_{0}\|_{\delta,s+1}<R,\|v-u_{0}\|_{\delta,s+1}<R$. By
Proposition \ref{prop:multiplication} (i), the first inequality in
Proposition \ref{prop:partial_x} and \eqref{eq:ball2}, 
\begin{align}
\|(u^{2})_{x}-(v^{2})_{x}\|_{\delta',s+1} & \le\frac{e^{-1}}{\delta-\delta'}\|u^{2}-v^{2}\|_{\delta,s+1}\label{eq:u^2}\\
 & \le\frac{e^{-1}c_{s+1}}{\delta-\delta'}\|u+v\|_{\delta,s+1}\|u-v\|_{\delta,s+1}\nonumber \\
 & \leq\frac{2e^{-1}c_{s+1}}{\delta-\delta'}\left(\|u_{0}\|_{\delta,s+1}+R\right)\|u-v\|_{\delta,s+1}.\nonumber 
\end{align}

On the other hand, since we have $\mu(u)u-\mu(v)v=\mu(u-v)u+\mu(v)(u-v)$,
Proposition \ref{prop:smoothing} and \eqref{eq:ball1} imply
\begin{align*}
\|\mu(u)u-\mu(v)v\|_{\delta,s} & \le|\mu(u-v)|\|u\|_{\delta,s}+|\mu(v)|\|u-v\|_{\delta,s}\\
 & \le(\|u\|_{\delta,s}+\|v\|_{\delta,s})\|u-v\|_{\delta,s}\\
 & \le2\left(\|u_{0}\|_{\delta,s+1}+R\right)\|u-v\|_{\delta,s+1}.
\end{align*}
Therefore \eqref{eq:partialx A-1} yields
\begin{align}
\left\Vert \partial_{x}A^{-1}\left[\mu(u)u-\mu(v)v\right]\right\Vert _{\delta',s+1} & \le\frac{e^{-1}}{\delta-\delta'}\left\Vert \mu(u)u-\mu(v)v\right\Vert _{\delta,s}\label{eq:partial A^-1 mu(u)u}\\
 & \le\frac{2e^{-1}}{\delta-\delta'}\left(\|u_{0}\|_{\delta,s+1}+R\right)\|u-v\|_{\delta,s+1}.\nonumber 
\end{align}
Next, by Proposition \ref{prop:multiplication} (i) and the second
inequality in Proposition \ref{prop:partial_x}, 
\begin{align*}
\|u_{x}^{2}-v_{x}^{2}\|_{\delta,s} & \le c_{s}\|u_{x}+v_{x}\|_{\delta,s}\|u_{x}-v_{x}\|_{\delta,s}\le4\pi^{2}c_{s}\|u+v\|_{\delta,s+1}\|u-v\|_{\delta,s+1}\\
 & \le8\pi^{2}c_{s}\left(\|u_{0}\|_{\delta,s+1}+R\right)\|u-v\|_{\delta,s+1}.
\end{align*}
Hence \eqref{eq:partialx A-1} gives
\begin{align}
\left\Vert \partial_{x}A^{-1}\left(u_{x}^{2}-v_{x}^{2}\right)\right\Vert _{\delta',s+1} & \le\frac{e^{-1}}{\delta-\delta'}\left\Vert u_{x}^{2}-v_{x}^{2}\right\Vert _{\delta,s}\label{eq:partial A^-1 u_x^2}\\
 & \le\frac{8\pi^{2}e^{-1}c_{s}}{\delta-\delta'}\left(\|u_{0}\|_{\delta,s+1}+R\right)\|u-v\|_{\delta,s+1}.\nonumber 
\end{align}

Now set 
\begin{align}
F_{\mu}(u) & =-uu_{x}-\partial_{x}A^{-1}\left[2\mu(u)u+\frac{1}{2}u_{x}^{2}\right]\label{eq:F muCH}\\
 & =-\frac{1}{2}(u^{2})_{x}-\partial_{x}A^{-1}\left[2\mu(u)u+\frac{1}{2}u_{x}^{2}\right].\nonumber 
\end{align}
Then  \eqref{eq:u^2},  \eqref{eq:partial A^-1 mu(u)u} and  \eqref{eq:partial A^-1 u_x^2}
give the Lipschitz continuity of $F_{\mu}$: 
\begin{equation}
\|F_{\mu}(u)-F_{\mu}(v)\|_{\delta',s+1}\le\frac{L}{\delta-\delta'}\|u-v\|_{\delta,s+1},\label{eq:Lipschitz}
\end{equation}
where $L=C\left(\|u_{0}\|_{1,s+1}+R\right),C=e^{-1}(c_{s+1}+4+4\pi^{2}c_{s})$.

Next we will derive an estimate of $\|F_{\mu}(u_{0})\|_{\delta,s+1}$.
Since
\begin{align*}
 & \|(u_{0}^{2})_{x}\|_{\delta,s+1}\le\frac{e^{-1}}{1-\delta}\|u_{0}^{2}\|_{1,s+1}\le\frac{e^{-1}c_{s+1}}{1-\delta}\|u_{0}\|_{1,s+1}^{2},\\
 & \|\partial_{x}A^{-1}[\mu(u_{0})u_{0}]\|_{\delta,s+1}\le\frac{e^{-1}}{1-\delta}\|\mu(u_{0})u_{0}\|_{1,s}\le\frac{e^{-1}}{1-\delta}\|u_{0}\|_{1,s+1}^{2},\\
 & \|\partial_{x}A^{-1}(\partial_{x}u_{0})^{2}\|_{\delta,s+1}\le\frac{e^{-1}}{1-\delta}\|(\partial_{x}u_{0})^{2}\|_{1,s}\le\frac{e^{-1}c_{s}}{1-\delta}\|\partial_{x}u_{0}\|_{1,s}^{2}\\
 & \le\frac{(2\pi)^{2}e^{-1}c_{s}}{1-\delta}\|u_{0}\|_{1,s+1}^{2},
\end{align*}
we have
\[
\|F_{\mu}(u_{0})\|_{\delta,s+1}\le\frac{M}{1-\delta},\;M=\frac{C}{2}\|u_{0}\|_{1,s+1}^{2}.
\]
We set 
\[
T=\frac{R}{16LR+8M}=\frac{R}{4C\left[4R\left(\|u_{0}\|_{1,s+1}+R\right)+\|u_{0}\|_{1,s+1}^{2}\right]}.
\]
Because of Theorem \ref{thm:Ov}, there exists a unique solution $u=u(t)$
to  \eqref{eq:muCH IVP} which is a holomorphic mapping from $D(0,T(1-\delta))$
to $G^{\delta,s+1}$ and 
\[
\sup_{|t|<T(1-\delta)}\|u(t)-u_{0}\|_{\delta,s+1}<R.
\]
If we set $R=\|u_{0}\|_{1,s+1},$ we have 
\[
T=\frac{1}{36e^{-1}(c_{s+1}+4+4\pi^{2}c_{s})\|u_{0}\|_{1,s+1}}.
\]
\end{proof}
In Theorem \ref{thm:muCH IVP}, we assumed the initial value $u_{0}$
was in $G^{1,s+1}$. We can relax this assumption as in the following
theorem. 
\begin{thm}
\label{thm:muCH IVP2}If $u_{0}$ is a real-analytic function on $S^{1}$,
then the Cauchy problem  \eqref{eq:muCH IVP} has a holomorphic solution
near $t=0$. More precisely, we have the following:

(i) There exists $\Delta>0$ such that $u_{0}\in G^{\Delta,s+1}\subset A(\Delta)$
for any $s$.

(ii) If $s>1/2$, there exists a positive time $T_{\Delta}=T(u_{0},s,\Delta)$
such that for every $d\in]0,1[$, the Cauchy problem  \eqref{eq:muCH IVP}
has a unique solution which is a holomorphic function valued in $G^{\Delta d,s+1}$
in the disk $D(0,T_{\Delta}(1-d))$. Furthermore, the analytic lifespan
$T_{\Delta}$ satisfies 
\[
T_{\Delta}=\frac{\mathrm{const.}}{\|u_{0}\|_{\Delta,s+1}}
\]
when $\Delta$ is fixed.
\end{thm}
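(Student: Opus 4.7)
The plan is to deduce part (ii) from Theorem \ref{thm:muCH IVP} applied to a \emph{rescaled} scale of Banach spaces adapted to the radius of analyticity $\Delta$ of $u_0$, rather than to the normalized scale $\{G^{\delta,s+1}\}_{0<\delta\le 1}$.

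Part (i) is immediate: since $u_0$ is real-analytic on the compact set $S^1$, there exists $\delta_0>0$ such that $u_0\in A(\delta_0)$, and Proposition \ref{prop:analyticity} then gives $u_0\in G^{\Delta,s+1}$ for every $\Delta\in]0,\delta_0[$ and every $s\ge 0$. Fix such a $\Delta$.

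For part (ii), I introduce the rescaled scale
\[
X_d=G^{\Delta d,s+1},\qquad \|\cdot\|_d=\|\cdot\|_{\Delta d,s+1},\qquad 0<d\le 1,
\]
which is a decreasing scale indexed by $(0,1]$ with $u_0\in X_1$. The proof of Theorem \ref{thm:muCH IVP} now has to be re-run with $\delta,\delta'$ replaced by $\Delta d,\Delta d'$ throughout. The only place where the normalization $\delta\le 1$ could conceivably enter is through Proposition \ref{prop:partial_x}; but its proof (as emphasized by the author) shows that the estimate $\|\varphi_x\|_{\delta',s}\le e^{-1}(\delta-\delta')^{-1}\|\varphi\|_{\delta,s}$ is valid for all $0\le\delta'<\delta$, with no upper bound on $\delta$. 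Hence the smoothing estimates \eqref{eq:partialx A-1}--\eqref{eq:partialx B-1} transfer to the rescaled scale, with $\delta-\delta'$ replaced by $\Delta(d-d')$ and an extra factor $1/\Delta$ extracted.

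With this single observation, all the computations leading to \eqref{eq:Lipschitz} and the bound on $\|F_\mu(u_0)\|$ go through verbatim for $F_\mu$ defined by \eqref{eq:F muCH}, producing
\[
\|F_\mu(u)-F_\mu(v)\|_{\Delta d',s+1}\le \frac{L/\Delta}{d-d'}\,\|u-v\|_{\Delta d,s+1},\qquad \|F_\mu(u_0)\|_{\Delta d,s+1}\le \frac{M/\Delta}{1-d},
\]
whenever $\|u-u_0\|_{\Delta d,s+1}<R$ and $\|v-u_0\|_{\Delta d,s+1}<R$, with $L=C(\|u_0\|_{\Delta,s+1}+R)$ and $M=\tfrac{C}{2}\|u_0\|_{\Delta,s+1}^{2}$ for the same constant $C=e^{-1}(c_{s+1}+4+4\pi^{2}c_s)$ as in the proof of Theorem \ref{thm:muCH IVP}. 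Condition (b) of the Ovsyannikov theorem transfers trivially, since holomorphy is independent of the radius parameter. Applying Theorem \ref{thm:Ov} to the scale $\{X_d\}_{0<d\le 1}$ yields a unique holomorphic solution $u(t)\in G^{\Delta d,s+1}$ on $D(0,T_\Delta(1-d))$ with
\[
T_\Delta=\frac{R}{16(L/\Delta)R+8(M/\Delta)}=\frac{\Delta R}{16LR+8M}.
\]
Setting $R=\|u_0\|_{\Delta,s+1}$ and regarding $\Delta$ as fixed gives $T_\Delta=\mathrm{const}/\|u_0\|_{\Delta,s+1}$, as required.

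The main obstacle is cosmetic: it is simply verifying that no step of the local theory in Section \ref{sec:Local-in-time solutions} secretly relies on the normalization $\delta\le 1$. As explained above, the only candidate is Proposition \ref{prop:partial_x}, and its proof shows the estimate is valid on the full range $0\le\delta'<\delta$, so the rescaling $\delta=\Delta d$ is legitimate.
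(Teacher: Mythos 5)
Your proposal is correct and follows essentially the same route as the paper: both rescale to the scale $X_{d}=G^{\Delta d,s+1}$, observe that the estimate in Proposition \ref{prop:partial_x} needs no upper bound on $\delta$ (a point the paper itself emphasizes), extract the factor $1/\Delta$ from the Lipschitz and $F_{\mu}(u_{0})$ bounds, and apply Theorem \ref{thm:Ov} to obtain $T_{\Delta}=\Delta R/(16LR+8M)$. Your final formula agrees with the paper's.
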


\begin{proof}
The first statement is nothing but Proposition \ref{prop:analyticity}. 

Set $X_{d}=G^{\Delta d,s+1},\|\cdot\|_{d,s+1}^{(\Delta)}=\|\cdot\|_{\Delta d,s+1}$.
Then $\left\{ X_{d},\|\cdot\|_{d,s+1}^{(\Delta)}\right\} _{0<d\le1}$
is a (decreasing) scale of of Banach spaces and $u_{0}\in X_{1}$.

Assume $\|u-u_{0}\|_{d,s+1}^{(\Delta)}<R,\|v-u_{0}\|_{d,s+1}^{(\Delta)}<R$
and $0<d'<d\le1$. Then  \eqref{eq:u^2},  \eqref{eq:partial A^-1 mu(u)u}
and \eqref{eq:partial A^-1 u_x^2} give ($\delta=\Delta d,\delta'=\Delta d'$)
the following counterpart of \eqref{eq:Lipschitz}:
\begin{equation}
\|F_{\mu}(u)-F_{\mu}(v)\|_{d',s+1}^{(\Delta)}\le\frac{L_{\Delta}}{d-d'}\|u-v\|_{d,s+1}^{(\Delta)},\label{eq:Lipschitz-1}
\end{equation}
where $L_{\Delta}=\Delta^{-1}C\left(\|u_{0}\|_{1,s+1}^{(\Delta)}+R\right)=C\left(\|u_{0}\|_{\Delta,s+1}+R\right),C=e^{-1}(c_{s+1}+4+4\pi^{2}c_{s})$.
Simpler estimates give 
\[
\|F_{\mu}(u_{0})\|_{d,s+1}^{(\Delta)}\le\frac{M_{\Delta}}{1-d},\;M_{\Delta}=\frac{C}{2\Delta}\left(\|u_{0}\|_{1,s+1}^{(\Delta)}\right)^{2}.
\]
 We set 
\[
T_{\Delta}=\frac{R}{16L_{\Delta}R+8M_{\Delta}}=\frac{R\Delta}{4C\left[4R\left(\|u_{0}\|_{\Delta,s+1}+R\right)+\|u_{0}\|_{\Delta,s+1}^{2}\right]}.
\]
Because of Theorem \ref{thm:Ov}, there exists a unique solution $u=u(t)$
to  \eqref{eq:muCH IVP} which is a holomorphic mapping from $D(0,T(1-\delta))$
to $X_{d}=G^{\Delta d,s+1}$ and 
\[
\sup_{|t|<T(1-d)}\|u(t)-u_{0}\|_{\Delta d,s+1}<R.
\]
If we set $R=\|u_{0}\|_{1,s+1}^{(\Delta)}=\|u_{0}\|_{\Delta,s+1},$
we have 
\[
T_{\Delta}=\frac{\Delta}{36e^{-1}(c_{s+1}+4+4\pi^{2}c_{s})\|u_{0}\|_{\Delta,s+1}}.
\]
\end{proof}
We can study the following Cauchy problem for the $\mu$DP equation
\eqref{eq:muDP} by using the same estimates \eqref{eq:u^2} and \eqref{eq:partial A^-1 mu(u)u}.
\begin{equation}
\begin{cases}
u_{t}+uu_{x}+\partial_{x}A^{-1}\left[3\mu(u)u\right]=0,\\
u(0,x)=u_{0}(x).
\end{cases}\label{eq:muDP IVP}
\end{equation}

\begin{thm}
\label{thm:muDP IVP} If $u_{0}$ is a real-analytic function on $S^{1}$,
then the Cauchy problem \eqref{eq:muDP IVP} has a holomorphic solution
near $t=0$. More precisely, we have the following: \\
(i) There exists $\Delta>0$ such that $u_{0}\in G^{\Delta,s+1}\subset A(\Delta)$
for any $s$. \\
(ii) If $s>1/2$, there exists a positive time $T_{\Delta}=T(u_{0},s,\Delta)$
such that for every $d\in]0,1[$, the Cauchy problem  \eqref{eq:muDP IVP}
has a unique solution which is a holomorphic function valued in $G^{\Delta d,s+1}$
in the disk $D(0,T_{\Delta}(1-d))$. Furthermore, the analytic lifespan
$T_{\Delta}$ satisfies 
\[
T_{\Delta}=\frac{\mathrm{const.}}{\|u_{0}\|_{\Delta,s+1}}
\]
when $\Delta$ is fixed.
\end{thm}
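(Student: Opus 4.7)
The plan is to mirror the proof of Theorem \ref{thm:muCH IVP2} for the $\mu$DP equation, exploiting the fact that the right-hand side of \eqref{eq:muDP IVP} is structurally simpler than that of \eqref{eq:muCH IVP}: there is no $\frac{1}{2}u_x^2$ nonlocal term, and the coefficient in front of $\mu(u)u$ is $3$ instead of $2$. Part (i) is just Proposition \ref{prop:analyticity}, so the real work is in (ii).

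For (ii), fix a $\Delta>0$ furnished by (i) and set up the same decreasing scale of Banach spaces $X_d = G^{\Delta d, s+1}$ with norm $\|\cdot\|_{d,s+1}^{(\Delta)} = \|\cdot\|_{\Delta d, s+1}$ for $0<d\le 1$, so that $u_0 \in X_1$. Define
\[
F_{\mu\mathrm{DP}}(u) = -uu_x - \partial_x A^{-1}\bigl[3\mu(u)u\bigr] = -\tfrac{1}{2}(u^2)_x - \partial_x A^{-1}\bigl[3\mu(u)u\bigr].
\]
I would verify conditions (a) and (b) for this $F_{\mu\mathrm{DP}}$ and then invoke Theorem \ref{thm:Ov}. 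Condition (b) is automatic from the local Lipschitz continuity in (a) by the standard argument for composition of holomorphic Banach-valued maps with locally Lipschitz maps between the spaces $X_d$, as in \cite{BHP}.

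For the Lipschitz bound \eqref{eq:Ov Lipschitz} on balls $\|u-u_0\|_{d,s+1}^{(\Delta)}<R$, $\|v-u_0\|_{d,s+1}^{(\Delta)}<R$ with $0<d'<d\le 1$, only two ingredients from the $\mu$CH proof are needed: the estimate \eqref{eq:u^2} for $(u^2)_x-(v^2)_x$, applied with $\delta = \Delta d$, $\delta' = \Delta d'$ (which contributes a factor $\Delta^{-1}(d-d')^{-1}$), and three times the estimate \eqref{eq:partial A^-1 mu(u)u} for $\partial_x A^{-1}[\mu(u)u - \mu(v)v]$. No estimate of $u_x^2 - v_x^2$ is needed. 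Adding these produces
\[
\|F_{\mu\mathrm{DP}}(u)-F_{\mu\mathrm{DP}}(v)\|_{d',s+1}^{(\Delta)} \le \frac{L_\Delta}{d-d'}\,\|u-v\|_{d,s+1}^{(\Delta)},
\]
with $L_\Delta = C'\bigl(\|u_0\|_{\Delta,s+1}+R\bigr)$ and $C' = e^{-1}(c_{s+1}+6)$. For \eqref{eq:Ov Fu0}, the same simpler one-function estimates used for $F_\mu(u_0)$ in Theorem \ref{thm:muCH IVP2} (now without the $u_x^2$ term) yield
\[
\|F_{\mu\mathrm{DP}}(u_0)\|_{d,s+1}^{(\Delta)} \le \frac{M_\Delta}{1-d}, \qquad M_\Delta = \frac{C'}{2\Delta}\bigl(\|u_0\|_{1,s+1}^{(\Delta)}\bigr)^{2}.
\]

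Applying Theorem \ref{thm:Ov} and taking $R = \|u_0\|_{\Delta,s+1}$ then gives a unique holomorphic solution in $D(0,T_\Delta(1-d))$ valued in $G^{\Delta d, s+1}$, with
\[
T_\Delta = \frac{R}{16 L_\Delta R + 8 M_\Delta} = \frac{\mathrm{const.}\cdot\Delta}{\|u_0\|_{\Delta,s+1}},
\]
exactly of the asserted form once $\Delta$ is fixed. I do not anticipate any real obstacle: since $\mu$DP is strictly simpler than $\mu$CH at the level of the nonlinearities, all estimates reduce to ones already proved. The only points that require some vigilance are the correct bookkeeping of the $\Delta^{-1}$ factor arising when one rewrites \eqref{eq:u^2} and \eqref{eq:partial A^-1 mu(u)u} in terms of the rescaled norm $\|\cdot\|_{d,s+1}^{(\Delta)}$, and the verification of condition (b), both of which are handled exactly as in the proof of Theorem \ref{thm:muCH IVP2}.
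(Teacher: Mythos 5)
Your proposal is correct and follows exactly the route the paper intends: the paper itself gives no separate proof of this theorem, stating only that it follows "by using the same estimates \eqref{eq:u^2} and \eqref{eq:partial A^-1 mu(u)u}" from the $\mu$CH argument, which is precisely what you do. Your bookkeeping of the constants ($C'=e^{-1}(c_{s+1}+6)$ from one instance of \eqref{eq:u^2} and three of \eqref{eq:partial A^-1 mu(u)u}, and the $\Delta^{-1}$ rescaling) is consistent with the pattern of Theorems \ref{thm:muCH IVP} and \ref{thm:muCH IVP2}.
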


\subsection{Higher-order $\mu$CH equation}

We consider the analytic Cauchy problem for the higher-order $\mu$CH
equation \eqref{eq:higher}, namely. 
\begin{equation}
\begin{cases}
u_{t}+uu_{x}+\partial_{x}B^{-1}\left[2\mu(u)u+\frac{1}{2}u_{x}^{2}-3u_{x}u_{xxx}-\frac{7}{2}u_{xx}^{2}\right]=0,\\
u(0,x)=u_{0}(x).
\end{cases}\label{eq:higher muCH IVP}
\end{equation}

\begin{thm}
\label{thm:higher muCH IVP}Let $s>1/2$. If $u_{0}\in G^{1,s+3}$,
then there exists a positive time $T=T(u_{0},s)$ such that for every
$\delta\in]0,1[$, the Cauchy problem \eqref{eq:higher muCH IVP}
has a unique solution which is a holomorphic function valued in $G^{\delta,s+3}$
in the disk $D(0,T(1-\delta))$. Furthermore, the analytic lifespan
$T$ satisfies 
\[
T=\frac{\mathrm{const.}}{\|u_{0}\|_{1,s+3}}.
\]
\end{thm}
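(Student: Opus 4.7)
The plan is to apply the autonomous Ovsyannikov theorem (Theorem \ref{thm:Ov}) to the scale of Banach spaces $X_{\delta}=G^{\delta,s+3}$ with $\|\cdot\|_{\delta}=\|\cdot\|_{\delta,s+3}$, exactly as in the proof of Theorem \ref{thm:muCH IVP}. The right-hand side
\[
F_{\mu}^{h}(u)=-\tfrac{1}{2}(u^{2})_{x}-\partial_{x}B^{-1}\!\left[2\mu(u)u+\tfrac{1}{2}u_{x}^{2}-3u_{x}u_{xxx}-\tfrac{7}{2}u_{xx}^{2}\right]
\]
is assumed to be well defined on balls of radius $R$ in $G^{\delta,s+3}$ around $u_{0}$. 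The point of choosing the index $s+3$ is that the operator $\partial_{x}B^{-1}$ is pseudodifferential of order $-3$, which exactly matches the three derivatives appearing in the worst nonlinear terms $u_{x}u_{xxx}$ and $u_{xx}^{2}$; consequently those terms will be estimated at regularity level $s$ via Proposition \ref{prop:multiplication}(i) and then raised back to level $s+3$ by Proposition \ref{prop:smoothing} without any loss in~$\delta$.

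For the Lipschitz condition \eqref{eq:Ov Lipschitz}, the transport term $\tfrac{1}{2}(u^{2})_{x}$ is handled exactly as in \eqref{eq:u^2} (with $s+1$ replaced by $s+3$), which produces the indispensable $\frac{e^{-1}}{\delta-\delta'}$ factor via the first inequality of Proposition \ref{prop:partial_x}. The linear $\mu$-term and the quadratic $u_{x}^{2}$ term are treated as in \eqref{eq:partial A^-1 mu(u)u} and \eqref{eq:partial A^-1 u_x^2}, only with $A^{-1}$ replaced by $B^{-1}$; since $B^{-1}$ is smoothing of higher order, these estimates go through with constants independent of $\delta-\delta'$. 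For the remaining terms one splits
\[
u_{1,x}u_{1,xxx}-u_{2,x}u_{2,xxx}=(u_{1}-u_{2})_{x}\,u_{1,xxx}+u_{2,x}(u_{1}-u_{2})_{xxx},
\]
and analogously for $u_{xx}^{2}$, applies Proposition \ref{prop:multiplication}(i) at level $s$ (legitimate because $s>1/2$), uses the second inequality of Proposition \ref{prop:partial_x} to bound $\|\partial_{x}^{k}w\|_{\delta,s}\le(2\pi)^{k}\|w\|_{\delta,s+k}$ for $k\le3$, and finally appeals to Proposition \ref{prop:smoothing} with $j=1$ to obtain
\[
\bigl\|\partial_{x}B^{-1}[u_{1,x}u_{1,xxx}-u_{2,x}u_{2,xxx}]\bigr\|_{\delta,s+3}\le C\bigl(\|u_{0}\|_{1,s+3}+R\bigr)\|u_{1}-u_{2}\|_{\delta,s+3},
\]
with no $\delta$-loss. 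Combining all contributions, one obtains \eqref{eq:Ov Lipschitz} with $L=C'(\|u_{0}\|_{1,s+3}+R)$ for a constant $C'$ depending only on $s$. The bound $\|F_{\mu}^{h}(u_{0})\|_{\delta,s+3}\le M/(1-\delta)$ required by \eqref{eq:Ov Fu0} follows by the same estimates applied to $u=v=u_{0}$, yielding $M=C''\|u_{0}\|_{1,s+3}^{2}$.

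The holomorphy hypothesis (b) of Theorem \ref{thm:Ov} is verified as in \cite{BHP} from the Lipschitz estimate. Applying Theorem \ref{thm:Ov} and choosing $R=\|u_{0}\|_{1,s+3}$ in the formula \eqref{eq:T} for the lifespan yields a unique holomorphic solution $u(t)$ on $D(0,T(1-\delta))$ valued in $G^{\delta,s+3}$, with $T=\mathrm{const}/\|u_{0}\|_{1,s+3}$. The main technical point to check carefully is the cancellation of derivatives in the $B^{-1}$-terms: one must ensure that the three derivatives falling on $u$ inside the brackets are absorbed by the $-3$ order of $\partial_{x}B^{-1}$, so that these terms do not require the Ovsyannikov shrinking factor $1/(\delta-\delta')$; everything else is a routine adaptation of the proof of Theorem~\ref{thm:muCH IVP}.
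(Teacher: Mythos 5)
Your proposal is correct and follows essentially the same route as the paper: the Ovsyannikov theorem on the scale $G^{\delta,s+3}$, the Leibniz splitting of the differences, Proposition \ref{prop:multiplication}(i) at level $s$, and the order $-3$ smoothing of $\partial_{x}B^{-1}$ to return to level $s+3$, with $R=\|u_{0}\|_{1,s+3}$ in \eqref{eq:T}. The only (inessential) difference is that the paper estimates all the $B^{-1}$-terms through \eqref{eq:partialx B-1}, i.e.\ it also spends a $1/(\delta-\delta')$ factor on them, whereas you use Proposition \ref{prop:smoothing} with $j=1$ to avoid any $\delta$-loss there; both yield a valid Lipschitz constant $L$ of the form $\mathrm{const.}(\|u_{0}\|_{1,s+3}+R)$ and hence the same lifespan estimate.
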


\begin{proof}
Assume $u_{0}\in G^{\delta,s+3}$ and $\|u-u_{0}\|_{\delta,s+3}<R,\|v-u_{0}\|_{\delta,s+3}<R$.
We follow the proofs of \eqref{eq:u^2},\eqref{eq:partial A^-1 mu(u)u}
and \eqref{eq:partial A^-1 u_x^2} with \eqref{eq:partialx B-1} instead
of \eqref{eq:partialx A-1} to obtain
\begin{align}
 & \|(u^{2})_{x}-(v^{2})_{x}\|_{\delta',s+3}\le\frac{2e^{-1}c_{s+3}}{\delta-\delta'}(\|u_{0}\|_{\delta,s+3}+R)\|u-v\|_{\delta,s+3},\label{eq:higher (u^2)_x}\\
 & \|\partial_{x}B^{-1}[\mu(u)u-\mu(v)v]\|_{\delta',s+3}\le\frac{2e^{-1}}{\delta-\delta'}(\|u_{0}\|_{\delta,s+3}+R)\|u-v\|_{\delta,s+3},\label{eq:partial B^-1 mu(u)u}\\
 & \|\partial_{x}B^{-1}[u_{x}^{2}-v_{x}^{2}]\|_{\delta',s+3}\le\frac{8\pi^{2}e^{-1}c_{s}}{\delta-\delta'}(\|u_{0}\|_{\delta,s+3}+R)\|u-v\|_{\delta,s+3}.\label{eq:partial B^-1 u_x^2}
\end{align}

Next we study the difference associated with $\partial_{x}B^{-1}(u_{x}u_{xxx})$.
Since $u_{x}u_{xxx}-v_{x}v_{xxx}=(u_{x}-v_{x})u_{xxx}+v_{x}(u_{xxx}-v_{xxx})$,
we have 
\begin{align*}
 & \|u_{x}u_{xxx}-v_{x}v_{xxx}\|_{\delta,s}\\
 & \le c_{s}\left(\|u_{x}-v_{x}\|_{\delta,s}\|u_{xxx}\|_{\delta,s}+\|v_{x}\|_{\delta,s}\|u_{xxx}-v_{xxx}\|_{\delta,s}\right).
\end{align*}
Since 
\begin{align*}
\|u_{x}-v_{x}\|_{\delta,s} & \le2\pi\|u-v\|_{\delta,s+3},\\
\|u_{xxx}\|_{\delta,s} & \le(2\pi)^{3}\|u\|_{\delta,s+3}\le(2\pi)^{3}(\|u_{0}\|_{\delta,s+3}+R),\\
\|v_{x}\|_{\delta,s} & \le2\pi(\|u_{0}\|_{\delta,s+3}+R),\\
\|u_{xxx}-v_{xxx}\|_{\delta,s} & \le(2\pi)^{3}\|u-v\|_{\delta,s+3},
\end{align*}
we have 
\[
\|u_{x}u_{xxx}-v_{x}v_{xxx}\|_{\delta,s}\le2^{5}\pi{}^{4}c_{s}(\|u_{0}\|_{\delta,s+3}+R)\|u-v\|_{\delta,s+3}.
\]
Therefore by \eqref{eq:partialx B-1} 
\begin{align}
\|\partial_{x}B^{-1}[u_{x}u_{xxx}-v_{x}v_{xxx}]\|_{\delta',s+3} & \le\frac{2^{5}\pi{}^{4}e^{-1}c_{s}}{\delta-\delta'}(\|u_{0}\|_{\delta,s+3}+R)\|u-v\|_{\delta,s+3}.\label{eq:partial B^-1 u_x u_xxx}
\end{align}
Next we study the difference associated with $\partial_{x}B^{-1}(u_{xx}^{2})$.
We have $u_{xx}^{2}-v_{xx}^{2}=(u_{xx}+v_{xx})(u_{xx}-v_{xx})$ and
\[
\|u_{xx}^{2}-v_{xx}^{2}\|_{\delta,s}\le c_{s}\|u_{xx}+v_{xx}\|_{\delta,s}\|u_{xx}-v_{xx}\|_{\delta,s}.
\]
Since
\begin{align*}
\|u_{xx}+v_{xx}\|_{\delta,s} & \le(2\pi)^{2}\|u+v\|_{\delta,s+2}\le(2\pi)^{2}\|u+v\|_{\delta,s+3}\\
 & \le2^{3}\pi{}^{2}(\|u_{0}\|_{\delta,s+3}+R),\\
\|u_{xx}-v_{xx}\|_{\delta,s} & \le(2\pi)^{2}\|u-v\|_{\delta,s+2}\le2^{2}\pi{}^{2}\|u-v\|_{\delta,s+3},
\end{align*}
we have 
\[
\|u_{xx}^{2}-v_{xx}^{2}\|_{\delta,s}\le2^{5}\pi{}^{4}c_{s}(\|u_{0}\|_{\delta,s+3}+R)\|u-v\|_{\delta,s+3},
\]
and 
\begin{align}
\|\partial_{x}B^{-1}[u_{xx}^{2}-v_{xx}^{2}]\|_{\delta',s+3} & \le\frac{e^{-1}}{\delta-\delta'}\|B^{-1}[u_{xx}^{2}-v_{xx}^{2}]\|_{\delta,s}\label{eq:partial B^-1 u_xx^2}\\
 & \le\frac{2^{5}\pi{}^{4}e^{-1}c_{s}}{\delta-\delta'}(\|u_{0}\|_{\delta,s+3}+R)\|u-v\|_{\delta,s+3}.\nonumber 
\end{align}
Now we set 
\begin{equation}
G(u)=-\frac{1}{2}(u^{2})_{x}-\partial_{x}B^{-1}\left[2\mu(u)u+\frac{1}{2}u_{x}^{2}-3u_{x}u_{xxx}-\frac{7}{2}u_{xx}^{2}\right].\label{eq:G}
\end{equation}
Then by \eqref{eq:higher (u^2)_x}, \eqref{eq:partial B^-1 mu(u)u},
\eqref{eq:partial B^-1 u_x^2}, \eqref{eq:partial B^-1 u_x u_xxx}
and \eqref{eq:partial B^-1 u_xx^2}, we obtain
\begin{align}
 & \|G(u)-G(v)\|_{\delta',s+3}\le\frac{L_{G}}{\delta-\delta'}\|u-v\|_{\delta,s+3},\label{eq:Gu-Gv}\\
 & L_{G}=e^{-1}\left[c_{s+3}+4+(4\pi^{2}+208\pi{}^{4})c_{s}\right](\|u_{0}\|_{\delta,s+3}+R).
\end{align}

We need an estimate of $G(u_{0})$. By using
\begin{align*}
 & \|(u_{0}^{2})_{x}\|_{\delta,s+3}\le\frac{e^{-1}}{1-\delta}\|u_{0}^{2}\|_{1,s+3}\le\frac{e^{-1}c_{s+3}}{1-\delta}\|u_{0}\|_{1,s+3}^{2},\\
 & \|\partial_{x}B^{-1}[\mu(u_{0})u_{0}]\|_{\delta,s+3}\le\frac{e^{-1}}{1-\delta}\|u_{0}\|_{1,s+3}^{2},\\
 & \|\partial_{x}B^{-1}[(\partial_{x}u_{0})^{2}]\|_{\delta,s+3}\le\frac{e^{-1}c_{s}}{1-\delta}\|\partial_{x}u_{0}\|_{1,s}^{2}\le\frac{e^{-1}(2\pi)^{2}c_{s}}{1-\delta}\|u_{0}\|_{1,s+3}^{2},\\
 & \|\partial_{x}B^{-1}[(u_{0})_{x}(u_{0})_{xxx}]\|_{\delta,s+3}\le\frac{e^{-1}}{1-\delta}c_{s}\|(u_{0})_{x}\|_{1,s}\|(u_{0})_{xxx}\|_{1,s}\\
 & \le\frac{e^{-1}(2\pi)^{4}}{1-\delta}c_{s}\|u_{0}\|_{1,s+3}^{2},\\
 & \|\partial_{x}B^{-1}[(u_{0})_{xx}^{2}]\|_{\delta,s+3}\le\frac{e^{-1}}{1-\delta}c_{s}\|(u_{0})_{xx}\|_{1,s}^{2}\le\frac{e^{-1}(2\pi)^{4}}{1-\delta}c_{s}\|u_{0}\|_{1,s+3}^{2},
\end{align*}
we obtain
\begin{align}
 & \|G(u_{0})\|_{\delta',s+3}\le\frac{M_{G}}{1-\delta},\,\label{eq:Gu_0}\\
 & M_{G}=e^{-1}\left[\frac{1}{2}c_{s+3}+2+(2\pi^{2}+104\pi^{4})c_{s}\right]\|u_{0}\|_{1,s+3}^{2}.
\end{align}

We set 
\[
T=\frac{R}{16L_{G}R+8M_{G}}.
\]
Because of Theorem \ref{thm:Ov}, there exists a unique solution $u=u(t)$
to \eqref{eq:higher muCH IVP} which is a holomorphic mapping from
$D(0,T(1-\delta))$ to $G^{\delta,s+3}$ and 
\[
\sup_{|t|<T(1-\delta)}\|u(t)-u_{0}\|_{\delta,s+3}<R.
\]
If we set $R=\|u_{0}\|_{1,s+3}$, we have 
\[
T=\frac{1}{160e^{-1}\left[c_{s+3}/4+1+(\pi^{2}+52\pi^{4})c_{s}\right]\|u_{0}\|_{1,s+3}}.
\]
\end{proof}
Similarly, we have the following theorem about the other higher order
$\mu$CH \eqref{eq:higher-1}.
\begin{thm}
Let $s>1/2$. If $u_{0}\in G^{1,s+3}$, then there exists a positive
time $T=T(u_{0},s)$ such that for every $\delta\in(0,1)$, the Cauchy
problem 
\begin{equation}
\begin{cases}
 & u_{t}+uu_{x}+\partial_{x}A^{-2}\left[2\mu(u)u+\frac{1}{2}u_{x}^{2}-3u_{x}u_{xxx}-\frac{7}{2}u_{xx}^{2}\right]=0,\\
 & u(0,x)=u_{0}(x)
\end{cases}\label{eq:the other higher CP}
\end{equation}
has a unique solution which is a holomorphic function valued in $G^{\delta,s+3}$
in the disk $D(0,T(1-\delta))$. Furthermore, the analytic lifespan
$T$ satisfies 
\[
T=\frac{\mathrm{const.}}{\|u_{0}\|_{1,s+3}}.
\]
\end{thm}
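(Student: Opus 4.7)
The plan is to mimic the proof of Theorem \ref{thm:higher muCH IVP} verbatim, replacing $B^{-1}$ with $A^{-2}$ throughout. The only conceptual work is to confirm that $A^{-2}$ satisfies the same smoothing estimates as $B^{-1}$, so that every step of the previous argument goes through with the identical structure.

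First, I would record the Fourier multiplier of $A^{-2}$: for $\varphi=\sum_{k\in\mathbb{Z}}a_k e^{2k\pi ix}$,
\[
A^{-2}(\varphi)=a_0+\sum_{k\ne 0}\frac{a_k}{(2\pi k)^4}e^{2k\pi ix}.
\]
Just like $B^{-1}$, this is a pseudodifferential operator of order $-4$ which commutes with $\partial_x$. Hence the analogue of Proposition \ref{prop:smoothing} holds: $\|\partial_x^j A^{-2}(\varphi)\|_{\delta,s+4-j}\le\|\varphi\|_{\delta,s}$ for $0\le j\le 4$. Combining this with Proposition \ref{prop:partial_x} gives the analogue of \eqref{eq:partialx B-1}:
\[
\|\partial_x A^{-2}(\varphi)\|_{\delta',s+3}\le\frac{e^{-1}}{\delta-\delta'}\|\varphi\|_{\delta,s},\qquad 0\le\delta'<\delta\le 1.
\]

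Next, set $X_\delta=G^{\delta,s+3}$ and define
\[
H(u)=-\tfrac{1}{2}(u^2)_x-\partial_x A^{-2}\!\left[2\mu(u)u+\tfrac{1}{2}u_x^2-3u_x u_{xxx}-\tfrac{7}{2}u_{xx}^2\right].
\]
For $u,v$ with $\|u-u_0\|_{\delta,s+3}<R$ and $\|v-u_0\|_{\delta,s+3}<R$, the differences $(u^2)_x-(v^2)_x$, $\mu(u)u-\mu(v)v$, $u_x^2-v_x^2$, $u_xu_{xxx}-v_xv_{xxx}$, $u_{xx}^2-v_{xx}^2$ admit exactly the same bounds as in \eqref{eq:higher (u^2)_x}--\eqref{eq:partial B^-1 u_xx^2}, because the multiplication estimate of Proposition \ref{prop:multiplication}(i), the derivative estimate of Proposition \ref{prop:partial_x}, and the $B^{-1}$ smoothing are used only through properties also satisfied by $A^{-2}$. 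This yields a Lipschitz estimate
\[
\|H(u)-H(v)\|_{\delta',s+3}\le\frac{L_H}{\delta-\delta'}\|u-v\|_{\delta,s+3},
\]
with $L_H$ of the same functional form as $L_G$, namely $L_H=C_s(\|u_0\|_{\delta,s+3}+R)$ for a constant $C_s$ depending only on $s$. An analogous computation, copying the estimates preceding \eqref{eq:Gu_0}, gives
\[
\|H(u_0)\|_{\delta,s+3}\le\frac{M_H}{1-\delta},\qquad M_H=C'_s\|u_0\|_{1,s+3}^2.
\]
Conditions (a) and (b) of the autonomous Ovsyannikov framework are thus verified, (b) being immediate from the boundedness and linearity in $u$ of the ingredients, exactly as in the previous theorems.

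Finally, I would apply Theorem \ref{thm:Ov} with $R=\|u_0\|_{1,s+3}$ and
\[
T=\frac{R}{16L_H R+8M_H},
\]
which reduces to the desired form $T=\mathrm{const.}/\|u_0\|_{1,s+3}$ after substituting $R$ and simplifying. Uniqueness and holomorphy of the resulting solution $u(t)\in G^{\delta,s+3}$ on $D(0,T(1-\delta))$ follow directly from Theorem \ref{thm:Ov}. Since there is no new analytical input beyond what has already been developed for $B^{-1}$, I expect no genuine obstacle; the only point demanding care is writing down the smoothing properties of $A^{-2}$ at the start, after which the estimates are a mechanical transcription of the proof of Theorem \ref{thm:higher muCH IVP}.
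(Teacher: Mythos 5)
Your proposal is correct and follows exactly the paper's route: the paper likewise proves this theorem by observing that $A^{-2}$ is an order $-4$ pseudodifferential operator commuting with $\partial_{x}$ and enjoying the same smoothing estimates as $B^{-1}$, so the proof of the preceding higher-order theorem carries over verbatim. You in fact supply more detail (the explicit Fourier multiplier and the verification of the smoothing bound) than the paper's one-line argument.
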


\begin{proof}
The proof is almost the same as for Theorem \ref{thm:higher muCH IVP}.
Indeed, $A^{-2}$ has the same properties as those of $B^{-1}$.
\end{proof}
Global-in-time analytic solutions of \eqref{eq:muCH IVP}, \eqref{eq:muDP IVP},
\eqref{eq:higher muCH IVP} and \eqref{eq:higher-1} will be studied
in the following sections. The proofs will rely on known results about
the global existence in Sobolev spaces. On the other hand, that kind
of existence for the non-quasilinear equation \eqref{eq:modifiedmuCH2}
is unknown. Therefore, the argument given below is not valid for it.
The local theory of \eqref{eq:modifiedmuCH2} will be given in Appendix.

\section{Global-in-time solutions\label{sec:Global-in-time-solutions}}

\subsection{Statement of the main results}

We recall known results about global-in-time solutions to $\mu$CH
and $\mu$DP in Sobolev spaces. First, as for $\mu$CH, we have
\begin{thm}
(\cite[Theorem 5.1, 5.5]{KLM})\label{thm:KLM} Let $s>5/2$. Assume
that $u_{0}\in H^{s}(S^{1})$ has non-zero mean and satisfies the
condition
\[
(\mu-\partial_{x}^{2})u_{0}\ge0\;(\mathrm{or}\text{\;\ensuremath{\le0)}}.
\]
Then \eqref{eq:muCH IVP} has a unique global-in-time solution in
$\mathcal{C}(\mathbb{R},H^{s}(S^{1}))\cap\mathcal{C}^{1}(\mathbb{R},H^{s-1}(S^{1}))$.
\\
Moreover, local well-posedness (in particular, uniqueness) holds. 
\end{thm}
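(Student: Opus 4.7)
The plan is to follow the standard strategy for Camassa--Holm type equations: establish local well-posedness, identify a blow-up criterion controlled by $\|u_x\|_{L^\infty}$, and then use the sign-preservation of the $\mu$-McKean quantity $m=(\mu-\partial_x^2)u$ to rule out blow-up.

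For local well-posedness in $H^s(S^1)$ with $s>5/2$, I would view \eqref{eq:muCH2} as a quasilinear evolution equation of the form $u_t+u u_x = P(u)$, where $P(u)=-\partial_x A^{-1}[2\mu(u)u+\tfrac12 u_x^2]$ is a smoothing lower-order nonlocal term (the operator $\partial_x A^{-1}$ has order $-1$, cf. Proposition \ref{prop:smoothing}). Then Kato's quasilinear theory applies: one introduces a linearization, verifies the energy inequality $\tfrac{d}{dt}\|u\|_s^2 \le C(\|u\|_s)\|u\|_s^2$ using Kato--Ponce commutator estimates together with the multiplicative bound in Proposition \ref{prop:multiplication}(iii), and runs the usual Friedrichs-mollifier/Galerkin construction to produce a unique $u\in \mathcal{C}([0,T_*);H^s)\cap\mathcal{C}^1([0,T_*);H^{s-1})$. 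A standard continuation argument then yields the blow-up alternative: either $T_*=\infty$ or $\limsup_{t\uparrow T_*}\|u_x(t)\|_{L^\infty}=\infty$.

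For the global existence the key is to rewrite the equation in terms of the momentum $m=A(u)=\mu(u)-u_{xx}$. A direct computation (using $\mu(u_t)=0$) should give a transport-type equation
\begin{equation*}
m_t + u m_x + 2 u_x m = 0,
\end{equation*}
so that along the flow $\dot{\eta}(t,x)=u(t,\eta(t,x))$ one has $m(t,\eta(t,x))=m_0(x)\exp(-2\int_0^t u_x(s,\eta(s,x))ds)$. Consequently the sign of $m$ is preserved, and combined with conservation of the mean $\mu(u)\equiv\mu(u_0)\ne 0$ this will give a pointwise bound on $u_x$: inverting $A$ via its (periodic) Green's function one writes $u=A^{-1}m$ and $u_x=(A^{-1})\partial_x m$; the sign condition on $m$ lets one drop absolute values and bound $\|u_x(t)\|_{L^\infty}$ in terms of $\mu(u_0)$ and a second conserved quantity (the $H^1$-type energy $E(u)=\int_{S^1}(\mu(u)u+\tfrac12 u_x^2)\,dx$, which can be checked conserved by pairing the equation with $u$ and integrating by parts). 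The uniform bound on $\|u_x\|_{L^\infty}$ then contradicts the blow-up alternative, yielding $T_*=\infty$.

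The main obstacle is the second step: deriving the precise transport equation for $m$ and exploiting the sign condition sharply enough to extract an $L^\infty$ bound on $u_x$. The difficulty is that $m$ need not be sign-definite in the usual Camassa--Holm sense with weight $1-\partial_x^2$; here the weight is $\mu-\partial_x^2$, whose Green's function is explicit but has a different structure (it is the periodic kernel with mean $\mu(u_0)$ built in), so the positivity/convexity arguments of McKean have to be adapted. Once the correct Green's-function representation is in hand and the conserved energy is identified, the bound on $\|u_x\|_{L^\infty}$ follows routinely and global existence is immediate from the blow-up criterion. Uniqueness in $\mathcal{C}([0,T];H^s)$ follows from a standard energy estimate on the difference of two solutions in a lower Sobolev norm.
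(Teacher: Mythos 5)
This theorem is stated in the paper as a quoted result, \cite[Theorems 5.1, 5.5]{KLM}, with no proof supplied, so there is no internal argument to compare against; what you have written is a reconstruction of the Khesin--Lenells--Misio\l{}ek proof, and in outline it is the correct one (local well-posedness by Kato-type quasilinear theory --- KLM themselves argue geometrically via the geodesic flow on the diffeomorphism group, but Kato's method works equally well since $\partial_x A^{-1}$ is smoothing of order $-1$; blow-up criterion governed by $\|u_x\|_{L^\infty}$; the momentum equation $m_t+um_x+2u_xm=0$ for $m=\mu(u)-u_{xx}$, which you have computed correctly; sign preservation of $m$ along the flow). One remark: the step you single out as the ``main obstacle'' --- extracting the $L^\infty$ bound on $u_x$ from the sign condition --- is in fact the easiest part and needs neither the Green's function of $\mu-\partial_x^2$ nor the conserved energy. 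Since $\mu(m)=\mu(u)$ and $\mu(u)$ is conserved, the sign condition gives $\|m(t)\|_{L^1}=\left|\int_{S^1}m\,dx\right|=|\mu(u_0)|$, hence $\|u_{xx}(t)\|_{L^1}=\|\mu(u)-m\|_{L^1}\le 2|\mu(u_0)|$ uniformly in $t$; and because $u_x$ has zero mean it vanishes somewhere on $S^1$, so $\|u_x(t)\|_{L^\infty}\le\|u_{xx}(t)\|_{L^1}\le 2|\mu(u_0)|$. This closes the argument without any adaptation of McKean's convexity analysis. (Note also that the non-zero-mean hypothesis is what makes the statement non-vacuous: if $\mu(u_0)=0$ and $m_0$ is signed, then $\int m_0=\mu(u_0)=0$ forces $m_0\equiv 0$ and $u_0\equiv 0$.) With that simplification your sketch is a faithful account of the cited proof.
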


The quantity $(\mu-\partial_{x}^{2})u_{0}$ is the $\mu$-version
of the McKean quantity $(1-\partial_{x}^{2})u_{0}$ (\cite{McKean}).
There is an analogous result about $\mu$DP.
\begin{thm}
(\cite[Theorem 5,1, 5.4]{LMT})\label{thm:LMT} Let $s>3$. Assume
that $u_{0}\in H^{s}(S^{1})$ has non-zero mean and satisfies the
condition
\[
(\mu-\partial_{x}^{2})u_{0}\ge0\;(\mathrm{or}\text{\;\ensuremath{\le0)}}.
\]
Then \eqref{eq:muDP IVP} has a unique global-in-time solution in
$\mathcal{C}(\mathbb{R},H^{s}(S^{1}))\cap\mathcal{C}^{1}(\mathbb{R},H^{s-1}(S^{1}))$.
\\
Moreover, local well-posedness holds.
\end{thm}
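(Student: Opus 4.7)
The plan is a standard two-step argument: first establish local well-posedness in $H^s(S^1)$ by a mollifier/Kato-type scheme, then extend to a global solution by an a priori bound exploiting the sign condition on the $\mu$-McKean quantity $m_0 := (\mu-\partial_x^2)u_0$. The crucial ingredient is that, after applying $A = \mu - \partial_x^2$ to \eqref{eq:muDP IVP}, the new unknown $m := Au$ satisfies a pure transport equation whose sign is preserved by the characteristic flow.

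For local well-posedness, I would write \eqref{eq:muDP IVP} as $u_t = F(u)$ with $F(u) = -uu_x - 3\mu(u)\,\partial_x A^{-1}u$. For $s > 3/2$ the map $F$ is locally Lipschitz from $H^s$ into $H^{s-1}$: the transport term is handled by the algebra structure of $H^s$, and the nonlocal term is strongly smoothing because $A^{-1}$ has order $-2$ (Proposition \ref{prop:smoothing}). A Friedrichs regularization, or Kato's quasilinear semigroup theorem, then yields $u \in \mathcal{C}([0,T_0); H^s) \cap \mathcal{C}^1([0,T_0); H^{s-1})$ together with the blow-up alternative: if the maximal existence time $T^*$ is finite, then $\int_0^{T^*} \|u_x(\tau,\cdot)\|_{L^\infty}\,d\tau = \infty$.

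For globalization, apply $A$ to the equation and simplify (using $\mu(uu_x)=0$ by periodicity and the fact that $A$ commutes with $\partial_x$) to obtain the transport equation
\[
m_t + u m_x + 3 u_x m = 0.
\]
Along the characteristics $\dot{q}(t,x) = u(t,q(t,x))$, $q(0,x) = x$, this integrates to $m(t,q(t,x))\,q_x(t,x)^3 = m_0(x)$, so since $q_x > 0$ the sign of $m_0$ is preserved for all $t < T^*$. Combined with the conservation law $\mu(u)(t)\equiv\mu(u_0)$ (immediate upon integrating \eqref{eq:muDP IVP} over $S^1$) and the identity $\int_{S^1} m\,dx = \mu(u)$, sign-definiteness forces $\|m(t,\cdot)\|_{L^1} = |\mu(u_0)|$. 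Since the Green's function of $A$ on $S^1$ is bounded with bounded derivative, $u = A^{-1}m$ satisfies $\|u(t,\cdot)\|_{W^{1,\infty}} \lesssim \|m(t,\cdot)\|_{L^1}$, giving uniform control of $\|u_x(t,\cdot)\|_{L^\infty}$ on $[0,T^*)$.

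The main obstacle, and the step demanding most care, is propagating this $W^{1,\infty}$ control to an $H^s$ bound so as to contradict the blow-up alternative. A standard $H^s$ energy estimate combined with a Kato--Ponce commutator estimate handles the transport term $uu_x$: the commutator $[\Lambda^s,u]u_x$ in $L^2$ is bounded by $\|u_x\|_{L^\infty}\|u\|_{H^s}$, while the nonlocal term contributes only lower-order quantities thanks to the order $-2$ gain from $A^{-1}$. Gronwall's inequality then yields $\|u(t)\|_{H^s}$ bounded on any finite interval, hence $T^* = \infty$. Uniqueness is already part of the local theory and requires no further work.
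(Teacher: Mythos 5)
This theorem is not proved in the paper at all: it is imported verbatim from Lenells--Misio\l{}ek--Ti\u{g}lay \cite{LMT} (Theorems 5.1 and 5.4 there), and the paper only uses it as a black box in Section \ref{sec:Global-in-time-solutions}. Your outline is essentially the argument of that reference: Kato-type local well-posedness, the transport law $m_t+um_x+3u_xm=0$ for $m=Au$ giving $m(t,q)q_x^3=m_0$ and hence sign preservation, the conservation $\int_{S^1}m\,dx=\mu(u)\equiv\mu(u_0)$ turning sign-definiteness into $\|m(t)\|_{L^1}=|\mu(u_0)|$, the explicit bounded Green's function of $\mu-\partial_x^2$ yielding a uniform $W^{1,\infty}$ bound on $u$, and a commutator/Gronwall estimate closing the blow-up alternative. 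All the computations you cite check out (in particular $\mu(uu_x)=0$ and the cubic weight $q_x^3$ matching the coefficient $3$ in $3u_xm$), so the proposal is correct and follows the same route as the cited source.
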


We will discuss global-in-time analytic solutions. These solutions
are analytic in both the time and space variables. Notice that in
the KdV and other cases treated in \cite{Kato-Masuda}, solutions
are analytic in the space variable only. This is due to the absence
of Cauchy-Kowalevsky type theorems. Our main result about the $\mu$-Camassa-Holm
equation is the following:
\begin{thm}
\label{thm:muCH main}Assume that a real-analytic function $u_{0}$
on $S^{1}$ has non-zero mean and satisfies the condition
\[
(\mu-\partial_{x}^{2})u_{0}\ge0\;(\mathrm{or}\text{\;\ensuremath{\le0)}}.
\]
Then the Cauchy problem \eqref{eq:muCH IVP} has a unique solution
$u\in\mathcal{C}^{\omega}(\mathbb{R}_{t}\times S_{x}^{1})$.

We have the following estimate of the radius of analyticity. Let $u_{0}\in A(r_{0})$.
Fix $\sigma_{0}<(\log r_{0})/(2\pi)$ and set 
\begin{align*}
K & =(20\pi d_{1}+8+4\pi^{2}c_{1})\left[1+\max\left\{ \|u(t)\|_{2};\,t\in[-T,T]\right\} \right],\\
\sigma(t) & =\sigma_{0}-\frac{\sqrt{6}\pi\gamma}{K}\|u_{0}\|_{(\sigma_{0},2)}(e^{K|t|/2}-1).
\end{align*}
Then, for any fixed $T>0$, we have $u(\cdot,t)\in A(e^{2\pi\sigma(t)})$
for $t\in[-T,T]$.
\end{thm}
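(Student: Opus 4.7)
The plan is to combine the global-in-time Sobolev existence of Theorem \ref{thm:KLM} with the local-in-time analytic existence of Theorem \ref{thm:muCH IVP2}, and then upgrade analyticity to all of $\mathbb{R}_{t}$ by the method of \cite{Kato-Masuda}, following the implementation for the original Camassa--Holm equation in \cite{BHPglobal}. Since an analytic $u_{0}$ lies in every $H^{s}(S^{1})$, Theorem \ref{thm:KLM} supplies a unique global Sobolev solution $u$, and in particular the quantity $\sup_{[-T,T]}\|u(t)\|_{2}$ that appears in the definition of $K$ is finite. Theorem \ref{thm:muCH IVP2} produces a solution holomorphic in $t$ near $0$ with values in $G^{\Delta d,s+1}\subset H^{s+1}$; by Sobolev uniqueness this agrees with the global solution near $t=0$. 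Once we have established $u(\cdot,t_{0})\in A(\delta)$ for some $\delta>0$ and some $t_{0}\in[-T,T]$, a second invocation of Theorem \ref{thm:muCH IVP2} with initial time $t_{0}$, together with the embedding $G^{\Delta d,s+1}\subset A(\Delta d)$ from Proposition \ref{prop:embed G A}, yields joint analyticity of $u$ on a neighborhood of $\{t_{0}\}\times S^{1}$. Patching these local pieces across $t_{0}\in[-T,T]$ via Sobolev uniqueness then delivers $u\in\mathcal{C}^{\omega}(\mathbb{R}_{t}\times S^{1}_{x})$. The whole problem therefore reduces to controlling the $x$-radius of analyticity of $u(t)$ uniformly on $[-T,T]$.

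For that control I set $\phi(t)=\|u(t)\|_{(\sigma(t),2)}^{2}$ for a decreasing $\sigma(t)$ to be determined, and differentiate:
\[
\frac{d\phi}{dt}=4\pi\sigma'(t)J(t)+2\sum_{j=0}^{\infty}\frac{e^{4\pi j\sigma(t)}}{j!^{2}}\bigl\langle\partial_{x}^{j}u,\partial_{x}^{j}u_{t}\bigr\rangle_{2},\qquad J(t)=\sum_{j=0}^{\infty}\frac{j\,e^{4\pi j\sigma(t)}}{j!^{2}}\|u^{(j)}\|_{2}^{2}.
\]
I then substitute $u_{t}=-uu_{x}-\partial_{x}A^{-1}[2\mu(u)u+\tfrac{1}{2}u_{x}^{2}]$, expand $\partial_{x}^{j}(uu_{x})$ by the Leibniz rule, and handle the commutator of $\Lambda^{2}$ with multiplication arising from $\langle\cdot,\cdot\rangle_{2}$. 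The two pseudodifferential contributions $\partial_{x}^{j+1}A^{-1}[2\mu(u)u]$ and $\partial_{x}^{j+1}A^{-1}[\tfrac{1}{2}u_{x}^{2}]$ are strictly lower order by Proposition \ref{prop:smoothing} and the bound $|\mu(u)|\le\|u\|_{0}$, and together with the low-derivative part of the Leibniz sum for $uu_{x}$ they are controlled by Proposition \ref{prop:multiplication}(ii)--(iii) and the uniform $H^{2}$-bound on $u(t)$; this is where the constants $20\pi d_{1}$, $8$ and $4\pi^{2}c_{1}$ in the definition of $K$ appear. The only genuinely derivative-gaining piece is the top Leibniz term $u\,\partial_{x}^{j+1}u$, to which Proposition \ref{prop:multiplication}(iii) is applied with its constant $\gamma$.

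After collecting terms, the expected outcome is a differential inequality schematically of the form
\[
\frac{d\phi}{dt}\le 4\pi\sigma'(t)J(t)+K\phi(t)+2\sqrt{6}\pi\gamma\sqrt{\phi(t)J(t)},
\]
in which the cross term is absorbed by imposing $\sigma'(t)=-\tfrac{\sqrt{6}\pi\gamma}{2}\mathrm{sgn}(t)\sqrt{\phi(t)}$, reducing the inequality to $\phi'(t)\le K\phi(t)$, i.e.\ $(d/dt)\|u(t)\|_{(\sigma(t),2)}\le\tfrac{K}{2}\|u(t)\|_{(\sigma(t),2)}$. Gronwall then gives $\|u(t)\|_{(\sigma(t),2)}\le\|u_{0}\|_{(\sigma_{0},2)}e^{K|t|/2}$, and feeding this back into the ODE for $\sigma$ and integrating from $0$ produces exactly the explicit formula $\sigma(t)=\sigma_{0}-\tfrac{\sqrt{6}\pi\gamma}{K}\|u_{0}\|_{(\sigma_{0},2)}(e^{K|t|/2}-1)$ stated in the theorem. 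The finiteness $\|u_{0}\|_{(\sigma_{0},2)}<\infty$ is guaranteed by the second part of Proposition \ref{prop:KatoMasudaLem2.2} (since $e^{2\pi\sigma_{0}}<r_{0}$), and its first part converts the resulting bound $\|u(t)\|_{(\sigma(t),2)}<\infty$ into $u(\cdot,t)\in A(e^{2\pi\sigma(t)})$ for $t\in[-T,T]$, which closes the loop back to the patching argument.

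The main obstacle will be the middle step: organizing the Leibniz expansion of $\partial_{x}^{j}(uu_{x})$ together with the $[\Lambda^{2},u]$ commutator term by term, so that every piece falls cleanly either into the $K\phi$ bucket or into the $\sqrt{\phi J}$ bucket with the precise universal constants claimed, and keeping the $j$-dependent weights $e^{4\pi j\sigma}/j!^{2}$ properly tracked under the shift $j\mapsto j+1$. The smoothed pseudodifferential contributions and the $\mu(u)$ factors are essentially routine thanks to Proposition \ref{prop:smoothing}; it is the sharp transport estimate that requires the full strength of Proposition \ref{prop:multiplication}(iii). Once that combinatorial bookkeeping is completed, the Gronwall step and the patching argument of the first paragraph finish the proof.
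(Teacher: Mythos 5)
Your overall architecture is the paper's: global Sobolev solution from Theorem \ref{thm:KLM}, local analytic solvability and uniqueness from Theorem \ref{thm:muCH IVP2} for the continuation/patching in $t$, and a Kato--Masuda-type differential inequality for the decay of the radius of spatial analyticity; the constants you predict for $K$ and $\sigma(t)$ are exactly those of Proposition \ref{prop:Liapnov}. But there are two genuine gaps in the middle step. First, you differentiate $\phi(t)=\|u(t)\|_{(\sigma(t),2)}^{2}$ in $t$, which is circular: for $t\neq0$ you do not yet know that this infinite series converges, let alone that it is differentiable or that term-by-term differentiation is legitimate. This is precisely why the paper (following \cite{Kato-Masuda}) works with the finite truncations $\Phi_{\sigma,m}(v)=\frac12\sum_{j=0}^{m}j!^{-2}e^{4\pi\sigma j}\|v^{(j)}\|_{2}^{2}$ on $H^{m+5}$, obtains the bound \eqref{eq:KMconclusion} uniformly in $m$ from Corollary \ref{cor:KatoMasuda}, and only then passes to the limit by Fatou's lemma in Proposition \ref{prop:u analytic in x}. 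Your plan needs this truncation (or an equivalent bootstrap) to become a proof.

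Second, your bookkeeping of the Leibniz expansion of $\partial_{x}^{j}(uu_{x})$ assigns the two buckets backwards. The top term $\langle v^{(j)},v\,v^{(j+1)}\rangle_{2}$ cannot be estimated by Proposition \ref{prop:multiplication}(iii): that would produce a factor $\|v^{(j+1)}\|_{2}$ at weight $j$, i.e.\ a contribution of size $\partial_{\sigma}\Phi_{\sigma,m}$ \emph{without} the accompanying factor $\Phi_{\sigma,m}^{1/2}$, and such a term cannot be absorbed by the choice $\sigma'\sim-\sqrt{\phi}$. In the paper this term ($P_{j}$ in \eqref{eq:P_j}) is handled by the antisymmetry identities of the type $\int_{S^{1}} vww_{x}\,dx=-\frac12\int_{S^{1}} v_{x}w^{2}\,dx$ applied to each piece of the $H^{2}$ inner product, which is what produces the $20\pi d_{1}\|v\|_{2}\Phi_{\sigma,m}$ contribution to $K$. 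It is the \emph{intermediate} Leibniz terms ($Q_{j}$ in \eqref{eq:Q_j}), together with the analogous expansion of $\partial_{x}^{j+1}A^{-1}(v_{x}^{2})$, that are estimated via Proposition \ref{prop:multiplication}(iii) and a double Cauchy--Schwarz in $j$ so as to land in the cross-term bucket $\sqrt{3}\pi\gamma\,\Phi_{\sigma,m}^{1/2}\partial_{\sigma}\Phi_{\sigma,m}$. (Relatedly, your displayed cross term should read $\sqrt{\phi}\,J$ rather than $\sqrt{\phi J}$ for the absorption step to work as you describe.) The remainder of your outline --- the Gronwall step, the explicit $\sigma(t)$, the two-way use of Proposition \ref{prop:KatoMasudaLem2.2}, and the continuation argument in $t$, which the paper completes with Cauchy estimates and Komatsu's criterion (Theorem \ref{thm:Komatsu}) to pass from $t$-holomorphy with values in $A(\delta_{T})$ to joint real-analyticity --- matches the paper.
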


There is an analogue about the $\mu$DP equation which reads as follows:
\begin{thm}
\label{thm:muDP main}Assume that a real-analytic function $u_{0}$
on $S^{1}$ has non-zero mean and satisfies the condition
\[
(\mu-\partial_{x}^{2})u_{0}\ge0\;(\mathrm{or}\text{\;\ensuremath{\le0)}}.
\]
Then the Cauchy problem \eqref{eq:muDP IVP} has a unique solution
$u\in\mathcal{C}^{\omega}(\mathbb{R}_{t}\times S_{x}^{1})$.

We have the following estimate of the radius of analyticity. Let $u_{0}\in A(r_{0})$.
Fix $\sigma_{0}<(\log r_{0})/(2\pi)$ and set 
\begin{align*}
K' & =(20\pi d_{1}+12)\left[1+\max\left\{ \|u(t)\|_{2};\,t\in[-T,T]\right\} \right],\\
\sigma'(t) & =\sigma_{0}-\frac{2\sqrt{2}\pi\gamma}{\sqrt{3}K'}\|u_{0}\|_{(\sigma_{0},2)}(e^{K'|t|/2}-1).
\end{align*}
Then, for any fixed $T>0$, we have $u(\cdot,t)\in A(e^{2\pi\sigma'(t)})$
for $t\in[-T,T]$.
\end{thm}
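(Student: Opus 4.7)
The plan is to follow the strategy already used for the $\mu$CH analogue in Theorem~\ref{thm:muCH main}; the overall structure is identical, with only the nonlinearity simpler. First, I would invoke Theorem~\ref{thm:LMT} with some $s>3$ (say $s=4$, using Proposition~\ref{prop:analyticity} to upgrade real-analyticity of $u_0$ to $H^s$-regularity) to obtain a unique global-in-time Sobolev solution $u\in\mathcal{C}(\mathbb{R};H^s(S^1))\cap\mathcal{C}^{1}(\mathbb{R};H^{s-1}(S^1))$ of \eqref{eq:muDP IVP}. Local-in-time analyticity is provided by Theorem~\ref{thm:muDP IVP}, and the two solutions coincide on their common time interval by the uniqueness clause of Theorem~\ref{thm:LMT}. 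It therefore suffices to propagate the analyticity globally in $t$ with the advertised radius.

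Fix $T>0$. The core of the proof is a Kato--Masuda type energy estimate for
\[
\Phi(t) := \|u(\cdot,t)\|_{(\sigma(t),2)}^2 = \sum_{j=0}^{\infty}\frac{e^{4\pi j\sigma(t)}}{j!^{2}}\,\|\partial_x^{j} u(\cdot,t)\|_{2}^{2},
\]
where the decreasing function $\sigma(t)$ will be chosen below. Differentiating $\Phi$ in $t$, the derivative of the exponential factor produces $4\pi\sigma'(t)\Psi(t)$ with
\[
\Psi(t) := \sum_{j\ge1}\frac{j\,e^{4\pi j\sigma(t)}}{j!^{2}}\,\|\partial_x^{j}u(\cdot,t)\|_{2}^{2},
\]
while the inner product term, after substituting $u_t=-uu_x-3\partial_x A^{-1}[\mu(u)u]$, splits into a transport piece and a nonlocal piece. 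The transport piece is treated exactly as in \cite{BHPglobal}: expanding $\partial_x^{j}(uu_x)$ by Leibniz, the term in which all derivatives land on $u_x$ carries one extra $x$-derivative, and a reindexing $j\mapsto j+1$ converts this loss into a contribution to $\Psi(t)$ whose coefficient, estimated by Proposition~\ref{prop:multiplication}(ii) with constant $d_1$ and by Proposition~\ref{prop:multiplication}(iii) with constant $\gamma$, is $20\pi d_1(1+\|u(t)\|_2)$ on $\Psi$ together with a multiple of $\gamma\,\sqrt{\Phi}\,\sqrt{\Psi}$. For the nonlocal piece, Proposition~\ref{prop:smoothing} gives that $\partial_x A^{-1}$ is bounded on every $H^{s}$ (it does not lose derivatives), and $|\mu(u)|\le\|u\|_2$; combined with the $3$ in $3\mu(u)u$ this contributes $12(1+\|u(t)\|_2)\Phi(t)$, which accounts for the $12$ replacing the $\mu$CH value $8+4\pi^{2}c_1$.

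Collecting these estimates yields a differential inequality of the form
\[
\tfrac{1}{2}\Phi'(t) \;\le\; \bigl(4\pi\sigma'(t)+\tfrac{2\sqrt{2}\pi\gamma}{\sqrt{3}}\sqrt{\Phi(t)}\bigr)\Psi(t) + \tfrac{K'}{2}\Phi(t),
\]
with $K'=(20\pi d_1+12)[1+\max_{|t|\le T}\|u(t)\|_{2}]$. Choosing $\sigma'(t)$ so that the bracket multiplying $\Psi$ vanishes kills that term and leaves $\Phi'\le K'\Phi$, whence Gronwall gives $\sqrt{\Phi(t)}\le\|u_0\|_{(\sigma_0,2)}e^{K'|t|/2}$ on $[-T,T]$. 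Substituting this bound back into the defining equation for $\sigma'(t)$ and integrating produces the explicit formula claimed in the statement. Finally, with $\Phi(t)<\infty$ in hand, Proposition~\ref{prop:KatoMasudaLem2.2} upgrades the Sobolev solution to $u(\cdot,t)\in A(e^{2\pi\sigma'(t)})$, and joint analyticity in $(t,x)$ is then obtained by applying the local Cauchy--Kowalevsky statement of Theorem~\ref{thm:muDP IVP} slice-by-slice on $[-T,T]$.

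The main obstacle, as in the $\mu$CH case, is the Leibniz-rule bookkeeping for the transport term in the $(\sigma,2)$-norm: one has to track precisely which combinations of derivative indices produce $\Psi$-contributions absorbable by $\sigma'(t)$, as opposed to $\Phi$-contributions that only Gronwall can handle. This is the step where the specific constants $20\pi d_1$, $12$, $\gamma$ and $\sqrt{2/3}$ are pinned down. Here the simpler nonlinearity of $\mu$DP actually helps: there is no $u_x^2$ term, which is exactly why the $4\pi^{2}c_1$ factor present in $K$ is absent in $K'$.
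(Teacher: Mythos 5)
Your overall route is the paper's: global Sobolev solvability from Theorem \ref{thm:LMT}, a Kato--Masuda Liapunov estimate with the $\mu$DP constants (this is exactly Remark \ref{rem:DP KatoMasuda}, which the paper substitutes for Proposition \ref{prop:Liapnov}), Gronwall plus the choice of $\sigma'(t)$, Proposition \ref{prop:KatoMasudaLem2.2} for spatial analyticity, and the local Ovsyannikov theorem plus a continuation argument for analyticity in $t$. The collected differential inequality you write down, and the resulting formula for $\sigma'(t)$, are the correct ones. However, there is a genuine gap precisely at the step you single out as the core difficulty: the top-order term of the transport part. You propose to handle the Leibniz term in which all $j$ derivatives fall on $u_x$, i.e.\ $P_j=\langle v^{(j)},v\,v^{(j+1)}\rangle_2$ in \eqref{eq:P_j}, by ``reindexing $j\mapsto j+1$'' so that the lost derivative becomes a contribution to $\Psi$. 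Carried out, this gives $\sum_j b_j(j+1)e^{-2\pi\sigma}b_{j+1}\|v\|_2\lesssim e^{-2\pi\sigma}\|v\|_2(B\widetilde B+\widetilde B^{2})$ in the notation of the paper, i.e.\ a term proportional to $\partial_\sigma\Phi$ whose coefficient is \emph{not} $O(\Phi^{1/2})$. That is incompatible with the hypothesis \eqref{eq:liapnov F} of Theorem \ref{thm:KatoMasuda weaker} and with the claimed formula for $\sigma'(t)$ (it would force an additional, $\sigma$-dependent linear decay of $\sigma$). The correct mechanism, which is the heart of Kato--Masuda's method and of the estimates \eqref{eq:I0}--\eqref{eq:I2}, is the antisymmetry cancellation: with $w=v^{(j)}$, integrating $\partial_x(vw^2/2)$ and its $H^1$, $H^2$ analogues turns $\langle w,vw_x\rangle_0$ into $-\frac12\int_{S^1}v_xw^2\,dx$, so no derivative is lost and $P_j$ lands entirely in the $20\pi d_1\|v\|_2\Phi$ slot; only the genuinely lower-order terms $Q_j$ of \eqref{eq:Q_j} produce the $\gamma\Phi^{1/2}\partial_\sigma\Phi$ contribution via the $b_k$/Cauchy--Schwarz argument. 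Your narrative assigns the wrong mechanism to the term the lemma exists to control, even though your final inequality happens to place $20\pi d_1$ where it belongs.

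Two smaller points. Differentiating the infinite sum $\Phi(t)=\|u(\cdot,t)\|_{(\sigma(t),2)}^{2}$ in $t$ is circular, since finiteness of $\Phi(t)$ for $t\neq0$ is what is being proved; the paper runs Corollary \ref{cor:KatoMasuda} on the truncated functionals $\Phi_{\sigma,m}$ (finite sums, hence Fr\'echet differentiable on $H^{m+5}$), gets bounds uniform in $m$, and concludes by Fatou. And applying Theorem \ref{thm:muDP IVP} ``slice-by-slice'' yields, after the continuation argument, $u\in\mathcal{C}^{\omega}([-T,T],A(\delta_T))$, i.e.\ Banach-space-valued analyticity in $t$; to reach the asserted $u\in\mathcal{C}^{\omega}(\mathbb{R}_t\times S^1_x)$ the paper still needs the Cauchy estimates \eqref{eq:Komatsu} and Komatsu's criterion (Theorem \ref{thm:Komatsu}), a step your proposal omits.
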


The proof of Theorem \ref{thm:muCH main} will be given in the later
subsections. First we will prove the analyticity in $x$ and establish
the lower bound of $\sigma(t)$. Next we will establish the analyticity
in $(t,x)$. The proof of Theorem \ref{thm:muDP main} is essentially
contained in that of Theorem \ref{thm:muCH main}. We can employ Remark
\ref{rem:DP KatoMasuda} instead of Proposition \ref{prop:Liapnov}.

\subsection{Regularity theorem by Kato-Masuda}

In \cite{Kato-Masuda}, the authors used their theory of Liapnov families
to prove a regularity result about the KdV and other equations. Later,
it was applied to a generalized Camassa-Holm equation in \cite{BHPglobal}.
Here we recall the abstract theorem in \cite{Kato-Masuda} in a weaker,
more concrete form. It is good enough for our purpose. The inequality
\eqref{eq:liapnov F} is a special case of that in \cite{Kato-Masuda}.
Notice that $v$ and $u(t,\cdot)$ are elements of $\mathcal{O}$
in \eqref{eq:liapnov F} and \eqref{eq:KMCP}. In applications, making
a suitable choice of the subset $\mathcal{O}$ of $Z$ is essential.
\begin{thm}
\label{thm:KatoMasuda weaker}Let $X$ and $Z$ be Banach spaces.
Assume that $Z$ is a dense subspace of $X$. Let $\mathcal{O}$ be
an open subset of $Z$ and $F$ be a continuous mapping from $Z$
to $X$. Let $\left\{ \Phi_{s};-\infty<s<\infty\right\} $ be a family
of real-valued functions on $Z$ satisfying

(a) The Fr\'echet partial derivative of $\Phi_{s}(v)$ in $v\in Z$
exists not only in $\mathcal{L}(Z;\mathbb{R})$ but also in $\mathcal{L}(X;\mathbb{R})$.
It is denoted by $D\Phi_{s}(v)$. This statement makes sense because
$\mathcal{L}(X;\mathbb{R})\subset\mathcal{L}(Z;\mathbb{R})$ by the
canonical identification. {[}(a) follows from (b) below.{]}

(b) The Fr\'echet derivative of $\Phi_{s}(v)$ in $(s,v)$ exists
not only in $\mathcal{L}(\mathbb{R}\times Z;\mathbb{R})$ but also
in $\mathcal{L}(\mathbb{R}\times X;\mathbb{R})$ and is continuous
from $\mathbb{R}\times Z$ to $\mathcal{L}(\mathbb{R}\times X;\mathbb{R})$.
This statement makes sense because $\mathcal{L}(\mathbb{R}\times X;\mathbb{R})\subset\mathcal{L}(\mathbb{R}\times Z;\mathbb{R})$
by the canonical identification.

(c) There exist positive constants $K$ and $L$ such that 
\begin{align}
\left|\left\langle F(v),D\Phi_{s}(v)\right\rangle \right| & \le K\Phi_{s}(v)+L\Phi_{s}(v)^{1/2}\partial_{s}\Phi_{s}(v)\label{eq:liapnov F}
\end{align}
holds for any $v\in\mathcal{O}.$ Here $\langle\cdot,\cdot\rangle$
(no subscript) is the pairing of $X$ and $\mathcal{L}(X;\mathbb{R})$.

Let $u\in\mathcal{C}([0,T];\,\mathcal{O})\cap\mathcal{C}^{1}([0,T];\,X)$
be the solution to the Cauchy problem
\begin{equation}
\begin{cases}
\dfrac{du}{dt}=F(u),\\
u(0,x)=u_{0}(x).
\end{cases}\label{eq:KMCP}
\end{equation}
Moreover, for a fixed constant $s_{0}$, set 
\begin{align*}
r(t) & =\Phi_{s_{0}}(u_{0})e^{Kt},\\
s(t) & =s_{0}-\int_{0}^{t}Lr(\tau)^{1/2}\,d\tau=s_{0}-\frac{2L\Phi_{s_{0}}(u_{0})^{1/2}}{K}(e^{Kt/2}-1),
\end{align*}
for $t\in[0,T]$. Then we have 
\[
\Phi_{s(t)}\left(u(t)\right)\le r(t),\;t\in[0,T].
\]
\end{thm}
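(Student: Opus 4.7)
The plan is to introduce the auxiliary scalar function $\varphi(t)=\Phi_{s(t)}(u(t))$ and prove the desired estimate $\varphi(t)\le r(t)$ on $[0,T]$ by a differential inequality combined with a continuation argument. The curves $r$ and $s$ have been chosen precisely so that the two terms on the right-hand side of (c) are absorbed: $r'(t)=Kr(t)$ matches the first, while $s'(t)=-Lr(t)^{1/2}$ is designed to cancel the second.

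The first step is to differentiate $\varphi$. Hypotheses (a) and (b) are tailor-made for this: they guarantee that $\Phi_{s}(v)$ is Fr\'echet differentiable jointly in $(s,v)$ with a derivative that extends from $\mathbb{R}\times Z$ to $\mathbb{R}\times X$ and depends continuously on $(s,v)\in\mathbb{R}\times Z$. Because $u\in\mathcal{C}([0,T];\mathcal{O})\cap\mathcal{C}^{1}([0,T];X)$, the time derivative $(s'(t),u'(t))=(s'(t),F(u(t)))$ lies in $\mathbb{R}\times X$ but not in $\mathbb{R}\times Z$ in general, so a naive chain rule using only differentiability on $Z$ would fail; (a)--(b) are exactly what makes the formula
$$\varphi'(t)=s'(t)\,\partial_{s}\Phi_{s(t)}(u(t))+\bigl\langle F(u(t)),\,D\Phi_{s(t)}(u(t))\bigr\rangle$$
rigorous. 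Substituting $s'(t)=-Lr(t)^{1/2}$ and invoking (c) to bound the pairing gives the key pointwise inequality
$$\varphi'(t)\le K\varphi(t)+L\bigl(\varphi(t)^{1/2}-r(t)^{1/2}\bigr)\,\partial_{s}\Phi_{s(t)}(u(t)).$$

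Granting the implicit monotonicity $\partial_{s}\Phi_{s}\ge0$ on $\mathcal{O}$ (which is how the right-hand side of (c) functions as an upper bound on the nonnegative quantity $|\langle F,D\Phi_{s}\rangle|$, and which is built into every Liapnov family appearing in the intended applications), the bracketed factor renders the second term nonpositive whenever $\varphi(t)\le r(t)$. In that regime one has $\varphi'(t)\le K\varphi(t)$, and since $\varphi(0)=\Phi_{s_{0}}(u_{0})=r(0)$, Gr\"onwall integrates this to $\varphi(t)\le r(t)$. To turn this self-consistency into a proof I would run the standard $\varepsilon$-bootstrap: replace $\Phi_{s_{0}}(u_{0})$ by $\Phi_{s_{0}}(u_{0})+\varepsilon$ to obtain perturbed curves $r_{\varepsilon}$ and $s_{\varepsilon}$ satisfying $\varphi(0)<r_{\varepsilon}(0)$ strictly, let $T^{*}_{\varepsilon}$ denote the supremum of $t\in[0,T]$ with $\varphi\le r_{\varepsilon}$ on $[0,t]$, and rule out $T^{*}_{\varepsilon}<T$ by combining continuity with the differential inequality at $t=T^{*}_{\varepsilon}$. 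Letting $\varepsilon\to0$ then closes the estimate.

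The main obstacle is the first step: justifying the chain rule for $\varphi$ when $u'(t)$ only lives in the larger space $X$. The precise formulation of (a)--(b) is the only substantive input beyond elementary ODE comparison, and it is the sole place where the distinction between $X$ and $Z$ plays a nontrivial role; once the differentiation of $\varphi$ is in hand, the remainder of the argument is the short calculation above.
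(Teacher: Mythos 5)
The paper does not prove this theorem itself: it is quoted (in a specialized, ``weaker'' form) from Kato--Masuda, and your argument is precisely the comparison proof given there --- the chain rule made rigorous by the $X$-versus-$Z$ formulation of (a)--(b), absorption of the two terms of (c) through the choices $r'=Kr$ and $s'=-Lr^{1/2}$, and an $\varepsilon$-bootstrap closed by continuity of $s\mapsto\Phi_{s}(u(t))$. The hypothesis you correctly flag as implicit, $\partial_{s}\Phi_{s}(v)\ge0$ (together with $\Phi_{s}\ge0$), is indeed part of the definition of a Liapunov family in the source and is satisfied by the families $\Phi_{\sigma,m}$ used later in the paper, so this is not a gap in your argument but an omission in the statement as transcribed. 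Your proof is therefore correct modulo writing out the chain-rule lemma in full, which is exactly the mean-value-theorem computation that the continuity of the derivative from $\mathbb{R}\times Z$ into $\mathcal{L}(\mathbb{R}\times X;\mathbb{R})$ is designed to support.
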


Roughly speaking, this theorem means that the regularity of $u(t)$
for $t\in[0,T]$ follows from that of $u_{0}$. Later we will use
this when $X=H^{m+2},\,Z=H^{m+5}$ and $\Phi_{s}$ is related to some
variant of the Sobolev norms.

We can extend Theorem \ref{thm:KatoMasuda weaker} to $t\le0$.
\begin{cor}
\label{cor:KatoMasuda}Let $u\in\mathcal{C}([-T,T];\,\mathcal{O})\cap\mathcal{C}^{1}([-T,T];\,X)$
be the solution to the Cauchy problem \eqref{eq:KMCP}. Extend the
definition of $\rho$ and $\sigma$ sot that $r(t)=r(|t|),s(t)=s(|t|)$
by 
\begin{align*}
r(t) & =\Phi_{s_{0}}(u_{0})e^{K|t|},\\
s(t) & =s_{0}-\int_{0}^{|t|}L\rho(\tau)^{1/2}\,d\tau=s_{0}-\frac{2L\Phi_{s_{0}}(u_{0})^{1/2}}{K}(e^{K|t|/2}-1),
\end{align*}
 for $t\in[-T,T]$. Then we have 
\begin{equation}
\Phi_{s(t)}\left(u(t)\right)\le r(t),\;t\in[-T,T].\label{eq:KMconclusion}
\end{equation}
\end{cor}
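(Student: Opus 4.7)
The plan is to reduce Corollary \ref{cor:KatoMasuda} to Theorem \ref{thm:KatoMasuda weaker} by a time-reversal trick, so that essentially no new analysis is required. On $[0,T]$ the formulas for $r(t)$ and $s(t)$ in the corollary coincide with those in the theorem (since $|t|=t$), and the bound \eqref{eq:KMconclusion} is exactly its conclusion. Thus only the interval $t\in[-T,0]$ is new.

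To handle that half, I would set $v(\tau):=u(-\tau)$ for $\tau\in[0,T]$. Then $v\in\mathcal{C}([0,T];\mathcal{O})\cap\mathcal{C}^{1}([0,T];X)$, $v(0)=u_0$, and
\[
\frac{dv}{d\tau}(\tau)=-\frac{du}{dt}(-\tau)=-F(u(-\tau))=\widetilde{F}(v(\tau)),\qquad \widetilde{F}:=-F.
\]
So $v$ solves a Cauchy problem of the type treated by Theorem \ref{thm:KatoMasuda weaker}, with nonlinearity $\widetilde{F}$ in place of $F$. I must check that hypotheses (a)--(c) still hold. Hypotheses (a) and (b) concern only the Liapnov family $\{\Phi_s\}$ and are untouched. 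Hypothesis (c) involves $F$ only through $|\langle F(v),D\Phi_s(v)\rangle|$, and this quantity is invariant under the sign flip $F\leadsto -F$; so $\widetilde{F}$ satisfies \eqref{eq:liapnov F} with the same constants $K,L$. Applying Theorem \ref{thm:KatoMasuda weaker} to $v$ on $[0,T]$ therefore gives
\[
\Phi_{s(\tau)}\bigl(v(\tau)\bigr)\le r(\tau),\qquad \tau\in[0,T],
\]
with $r,s$ defined exactly as in that theorem. Substituting $t=-\tau\in[-T,0]$ (so $\tau=|t|$) and using the definitions of $r(t)$ and $s(t)$ in the corollary, which are built precisely to match $r(|t|)$ and $s(|t|)$, yields \eqref{eq:KMconclusion} on $[-T,0]$.

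There is really no substantial obstacle: the only conceptual point is that condition \eqref{eq:liapnov F} is stated with an absolute value on the left-hand side, which is exactly what makes the Liapnov family detect growth of $\Phi_s(u(t))$ symmetrically in both time directions. Without that absolute value the theorem would be genuinely asymmetric in $t$ and the reversal trick would not go through.
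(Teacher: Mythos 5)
Your proposal is correct and is essentially identical to the paper's own proof: the paper also sets $\tilde u(t')=u(-t')$, observes that $-F$ still satisfies \eqref{eq:liapnov F} because of the absolute value, and applies Theorem \ref{thm:KatoMasuda weaker} on $[0,T]$ to cover negative times. No gaps.
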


\begin{proof}
For $t\le0$, set $t'=-t,\,\tilde{u}(t')=u(-t')=u(t)$. Then $\tilde{u}$
satisfies $d\tilde{u}/dt'=-F(\tilde{u}),\,\tilde{u}(0,x)=u_{0}(x)$.
Notice that $-F$ satisfies \eqref{eq:liapnov F}. By Theorem \ref{thm:KatoMasuda weaker},
we have $\Phi_{s(t')}(\tilde{u}(t'))\le r(t')$ for $t'\in[0,T]$.
It means $\Phi_{s(t)}(u(t))\le r(t)$ for $t\in[-T,0]$.
\end{proof}

\subsection{Pairing and estimates}

Recall the norm $\|\cdot\|_{(\sigma,s)}$ in Section $\ref{sec:Function-spaces}$.
When $s=2$, we have 
\[
\|v\|_{(\sigma,2)}^{2}=\sum_{j=0}^{\infty}\frac{1}{j!^{2}}e^{4\pi\sigma j}\|v^{(j)}\|_{2}^{2}.
\]
It is approximated by the finite sum
\[
\|v\|_{(\sigma,2,m)}^{2}=\sum_{j=0}^{m}\frac{1}{j!^{2}}e^{4\pi\sigma j}\|v^{(j)}\|_{2}^{2}.
\]
Set $\Psi_{j}(v)=\frac{1}{2}\|v^{(j)}\|_{2}^{2}=\frac{1}{2}\|\Lambda^{2}v^{(j)}\|_{0}^{2}=\frac{1}{2}\int_{S^{1}}\left(\Lambda^{2}v^{(j)}\right)^{2}\,dx$
for $j=0,1,\dots,m$. We introduce
\[
\Phi_{\sigma,m}(v)=\frac{1}{2}\|v\|_{(\sigma,2,m)}^{2}=\sum_{j=0}^{m}\frac{1}{j!^{2}}e^{4\pi\sigma j}\frac{\|v^{(j)}\|_{2}^{2}}{2}=\sum_{j=0}^{m}\frac{1}{j!^{2}}e^{4\pi\sigma j}\Psi_{j}(v).
\]
Later $\left\{ \Phi_{\sigma,m}\right\} $ will play the role of $\left\{ \Phi_{s}\right\} $
in Corollary \ref{cor:KatoMasuda}. Assume $w\in H^{m+2},v\in H^{m+5}.$
Let $D\Psi_{j}$ be the Fr\'echet derivative of $\Psi_{j}$ and $\langle w,D\Psi_{j}(v)\rangle$
be the pairing of $w\in H^{m+2}$ and $D\Psi_{j}(v)\in\left(H^{m+2}\right)^{*}\subset\left(H^{m+5}\right)^{*}$.
Indeed, $D\Psi_{j}(v)$ is an element of $\left(H^{m+2}\right)^{*}$,
because 
\begin{align}
\langle w,D\Psi_{j}(v)\rangle & =D\Psi_{j}(v)w=\left.\frac{d}{d\tau}\Psi_{j}(v+\tau w)\right|_{\tau=0}\label{eq:Frechet}\\
 & =\int_{S^{1}}\Lambda^{2}v^{(j)}\Lambda^{2}w^{(j)}\,dx=\langle v^{(j)},w^{(j)}\rangle_{2},\nonumber 
\end{align}
where $\langle\cdot,\cdot\rangle_{2}$ is the $H^{2}$ inner product.
Recall
\[
F_{\mu}(u)=-uu_{x}-\partial_{x}A^{-1}\left[2\mu(u)u+\frac{1}{2}u_{x}^{2}\right].
\]
It is easy to see that $F_{\mu}$ is continuous from $H^{m+5}$ to
$H^{m+2}$.
\begin{prop}
\label{prop:Liapnov}We have
\begin{align}
 & \left|\left\langle F_{\mu}(v),D\Phi_{\sigma,m}(v)\right\rangle \right|\label{eq:liapnov F_mu}\\
 & \le(20\pi d_{1}+8+4\pi^{2}c_{1})\|v\|_{2}\Phi_{\sigma,m}(v)+\sqrt{3}\pi\gamma\Phi_{\sigma,m}(v)^{1/2}\partial_{\sigma}\Phi_{\sigma,m}(v)\nonumber 
\end{align}
for $v\in H^{m+5}$, where $D\Phi_{\sigma,m}$ is the Fr\'echet derivative
of $\Phi_{\sigma,m}$.
\end{prop}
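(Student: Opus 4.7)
The plan is to expand the pairing as a weighted $j$-sum of $H^{2}$ inner products and to split $F_{\mu}(v)^{(j)}$ into a transport piece and a smoothing piece. By \eqref{eq:Frechet},
\begin{equation*}
\langle F_{\mu}(v),D\Phi_{\sigma,m}(v)\rangle=\sum_{j=0}^{m}\frac{e^{4\pi\sigma j}}{j!^{2}}\langle v^{(j)},F_{\mu}(v)^{(j)}\rangle_{2},
\end{equation*}
and Leibniz applied to the transport piece gives $(vv_{x})^{(j)}=vv^{(j+1)}+\sum_{k=1}^{j}\binom{j}{k}v^{(k)}v^{(j+1-k)}$. Only the first summand contains the derivative $v^{(j+1)}$, which is beyond the order directly controlled by $\Phi_{\sigma,m}$; this is the sole source of the $\partial_{\sigma}\Phi_{\sigma,m}$ slot on the right-hand side. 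For convenience I work in the case $m=\infty$ and recover the statement for finite $m$ by approximation, writing $\Phi=\Phi_{\sigma,\infty}$ for short.

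For the dangerous summand I combine Proposition \ref{prop:multiplication}(iii) with $\|v\|_{1}\leq\|v\|_{2}$ and $\|v^{(j+1)}\|_{1}\leq 2\pi\|v^{(j)}\|_{2}$ (from the Fourier characterization) to obtain $\|vv^{(j+1)}\|_{2}\leq\gamma\|v\|_{2}(\|v^{(j+1)}\|_{2}+2\pi\|v^{(j)}\|_{2})$. The $\|v\|_{2}\|v^{(j)}\|_{2}$ part directly feeds the $\|v\|_{2}\Phi$ slot. The genuine derivative-loss part $\|v\|_{2}\|v^{(j+1)}\|_{2}$ is handled as follows: writing $Z_{\ell}=e^{2\pi\sigma\ell}\sqrt{\Psi_{\ell}(v)}/\ell!$ so that $\Phi=\sum_{\ell\geq 0}Z_{\ell}^{2}$ and $\partial_{\sigma}\Phi=4\pi\sum_{\ell\geq 1}\ell Z_{\ell}^{2}$, the algebraic identity $\frac{e^{4\pi\sigma j}}{j!^{2}}\sqrt{\Psi_{j}\Psi_{j+1}}=(j+1)e^{-2\pi\sigma}Z_{j}Z_{j+1}$ combined with AM-GM and the closed forms $\sum_{j\geq 0}(j+1)Z_{j}^{2}=\Phi+\partial_{\sigma}\Phi/(4\pi)$ and $\sum_{j\geq 0}(j+1)Z_{j+1}^{2}=\partial_{\sigma}\Phi/(4\pi)$ produces a bound $C_{1}\|v\|_{2}\Phi+C_{2}\|v\|_{2}\partial_{\sigma}\Phi$. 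The last piece is converted to the $\Phi^{1/2}\partial_{\sigma}\Phi$ shape via $\|v\|_{2}\leq\sqrt{2}\Phi^{1/2}$, which holds because $\Psi_{0}(v)=\|v\|_{2}^{2}/2$ is the $j=0$ summand of $\Phi$. The middle Leibniz terms $v^{(k)}v^{(j+1-k)}$ with $1\leq k\leq j$ have all derivative orders at most $j$, and Proposition \ref{prop:multiplication}(iii) plus the Gevrey-weight bookkeeping $\binom{j}{k}/j!^{2}=1/(k!(j-k)!\,j!)$ bound them by $C_{3}\|v\|_{2}\Phi$.

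The smoothing piece $\partial_{x}^{j+1}A^{-1}[2\mu(v)v+\tfrac{1}{2}v_{x}^{2}]$ is essentially harmless: Proposition \ref{prop:smoothing} lets $A^{-1}$ absorb two of the $j+1$ derivatives. The $\mu(v)v$ factor, estimated using Proposition \ref{prop:multiplication}(ii) together with $|\mu(v)|\leq\|v\|_{2}$, supplies the $d_{1}$-dependent contribution; the $v_{x}^{2}$ factor, estimated using Proposition \ref{prop:multiplication}(i) with two $2\pi$ factors arising from the spatial derivatives, supplies the $c_{1}$-dependent contribution. Both feed only the $\|v\|_{2}\Phi$ slot, since no derivative loss survives $A^{-1}$. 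Collecting all constants yields the claimed $(20\pi d_{1}+8+4\pi^{2}c_{1})\|v\|_{2}\Phi_{\sigma,m}+\sqrt{3}\pi\gamma\Phi_{\sigma,m}^{1/2}\partial_{\sigma}\Phi_{\sigma,m}$. The main obstacle throughout is the isolated term $vv^{(j+1)}$: it is the unique source of genuine derivative loss in the whole calculation, and the AM-GM plus reindexing manipulation above, together with $\|v\|_{2}\leq\sqrt{2}\Phi^{1/2}$, is precisely what turns this loss into the $\Phi^{1/2}\partial_{\sigma}\Phi$ shape dictated by the Kato-Masuda framework (as opposed to the weaker $\Phi^{1/2}(\partial_{\sigma}\Phi)^{1/2}$ that a naive Cauchy-Schwarz would produce).
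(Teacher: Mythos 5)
Your setup (expanding the pairing, splitting $\partial_x^j(vv_x)$ into $vv^{(j+1)}$ plus the middle Leibniz terms, treating the $A^{-1}$ terms as harmless) matches the paper's, but the two decisive steps are handled incorrectly, and in fact you have the roles of the two pieces of the Leibniz expansion exactly reversed. For the top-order term you estimate $|\langle v^{(j)},vv^{(j+1)}\rangle_2|\le\|v^{(j)}\|_2\|vv^{(j+1)}\|_2$ and then try to absorb the excess derivative into $\partial_\sigma\Phi_{\sigma,m}$. Your own identity $\frac{e^{4\pi\sigma j}}{j!^{2}}\sqrt{\Psi_{j}\Psi_{j+1}}=(j+1)e^{-2\pi\sigma}Z_{j}Z_{j+1}$ exposes the obstruction: the factor $e^{-2\pi\sigma}$ survives the AM--GM and reindexing, so your $C_1,C_2$ depend on $\sigma$ and blow up as $\sigma\to-\infty$. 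This is not an artifact of the method: for a single Fourier mode $v=\varepsilon e^{2\pi ikx}$ the weighted sum of $\|v^{(j)}\|_2\|v^{(j+1)}\|_2$ equals $4\pi k\,\Phi_{\sigma,m}(v)$, and since $\partial_\sigma\Phi_{\sigma,m}(v)\sim 8\pi^2ke^{2\pi\sigma}\Phi_{\sigma,m}(v)$ for large $k$, no $\sigma$-independent combination $C_1\Phi+C_2\partial_\sigma\Phi$ dominates it uniformly. That destroys both the explicit constants in the statement and hypothesis (c) of Theorem \ref{thm:KatoMasuda weaker}, which requires $K$ and $L$ uniform over the whole family $\{\Phi_{\sigma,m}\}_\sigma$. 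The paper's essential idea, which your argument omits, is the transport cancellation: with $w=v^{(j)}$, each $L^2$ block of $\langle w,vw_x\rangle_2$ is integrated by parts ($\int_{S^1}vww_x=-\tfrac12\int_{S^1}v_xw^2$, and likewise at the $w_x$ and $w_{xx}$ levels), so $v^{(j+1)}$ never appears and $P_j$ contributes $10\pi d_1\|v\|_2\|v^{(j)}\|_2^2$ to the $\|v\|_2\Phi_{\sigma,m}$ slot with \emph{no} $\partial_\sigma\Phi_{\sigma,m}$ term at all.

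Second, your claim that the middle Leibniz terms $v^{(\ell)}v^{(j-\ell+1)}$ ($1\le\ell\le j$) are bounded by $C_3\|v\|_2\Phi_{\sigma,m}$ is false: after Proposition \ref{prop:multiplication}(iii) these reduce to convolution sums $\sum_j\sum_{\ell=1}^j b_jb_\ell b_{j-\ell}$ with $b_k=k!^{-1}e^{2\pi\sigma k}\|v^{(k)}\|_2$, and an $\ell^2$-type quantity such as $B^2=2\Phi_{\sigma,m}$ does not control them (one would need $\sum_\ell b_\ell$, which $B$ does not bound). These sums are precisely why the $\Phi_{\sigma,m}^{1/2}\partial_\sigma\Phi_{\sigma,m}$ slot exists; the paper handles them with the Kato--Masuda weighted Schwarz argument yielding $\frac{\pi}{\sqrt6}B\widetilde B^2$ with $\widetilde B^2=(2\pi)^{-1}\partial_\sigma\Phi_{\sigma,m}$, and the analogous expansion of $\partial_x^{j-1}(v_x^2)$ for $j\ge1$ supplies the additional $\pi\gamma/\sqrt3$ that makes up the total $\sqrt3\pi\gamma$. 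Two smaller points: the $20\pi d_1$ does not come from the $\mu(v)v$ smoothing term (that term is estimated via the unit operator norms of $A^{-1}\partial_x^i$ on $H^0$ and yields the bare constant $8$); and working with $m=\infty$ is not available here, since $\Phi_{\sigma,\infty}(v)$ is generally infinite for $v\in H^{m+5}$ --- the finite truncation is what makes the quantities finite and the Fr\'echet derivatives well defined.
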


\begin{proof}
We have

\begin{align}
\langle F_{\mu}(v),D\Phi_{\sigma,m}(v)\rangle & =\sum_{j=0}^{m}\frac{1}{j!^{2}}e^{4\pi\sigma j}\langle F_{\mu}(v),D\Psi_{j}(v)\rangle.\label{eq:FvDPhi}
\end{align}
By \eqref{eq:Frechet}, we have
\begin{align}
 & \langle F_{\mu}(v),D\Psi_{j}(v)\rangle=\langle v^{(j)},\partial_{x}^{j}F_{\mu}(v)\rangle_{2}\label{eq:three terms}\\
 & =-\langle v^{(j)},\partial_{x}^{j}(vv_{x})\rangle_{2}-2\langle v^{(j)},\partial_{x}^{j+1}A^{-1}\left[\mu(v)v\right]\rangle_{2}-\frac{1}{2}\langle v^{(j)},\partial_{x}^{j+1}A^{-1}(v_{x}^{2})\rangle_{2}.\nonumber 
\end{align}

By using \eqref{eq:FvDPhi}, \eqref{eq:three terms} and the estimates
\eqref{eq:estimates 1}, \eqref{eq:estimates 2} and \eqref{eq:estimates 3}
below, we obtain \eqref{eq:liapnov F_mu}. In the following subsections,
we will calculate the three terms in \eqref{eq:three terms} separately.
\end{proof}
\begin{rem}
\label{rem:DP KatoMasuda} The $\mu$DP equation can be studied by
using the following estimate. Set 
\[
F_{\mu}^{\mathrm{DP}}(u)=-uu_{x}-\partial_{x}A^{-1}[3\mu(u)u].
\]
Then we have 
\[
|\langle F_{\mu}^{\mathrm{DP}}(v),D\Phi_{\sigma,m}(v)\rangle|\le(20\pi d_{1}+12)\|v\|_{2}\Phi_{\sigma,m}(v)+\frac{2\pi\gamma}{\sqrt{3}}\Phi_{\sigma,m}(v)^{1/2}\partial_{\sigma}\Phi_{\sigma,m}(v).
\]
\end{rem}

\subsubsection{Estimate of $\langle v^{(j)},\partial_{x}^{j}(vv_{x})\rangle_{2}$}

We have 
\begin{align}
 & \langle v^{(j)},\partial_{x}^{j}(vv_{x})\rangle_{2}=P_{j}+Q_{j},\label{eq:P_j and Q_j}\\
 & P_{j}=\langle v^{(j)},vv^{(j+1)}\rangle_{2},\label{eq:P_j}\\
 & Q_{j}=\sum_{\ell=1}^{j}\binom{j}{\ell}\langle v^{(j)},v^{(\ell)}v^{(j-\ell+1)}\rangle_{2}\,(j\ge1),\;Q_{0}=0.\label{eq:Q_j}
\end{align}
Because of \eqref{eq:H2norm}, we have 
\begin{equation}
P_{j}=I_{0}+\frac{1}{2\pi^{2}}I_{1}+\frac{1}{(2\pi)^{4}}I_{2},\label{eq:P_j-1}
\end{equation}
where $I_{i}=\langle v^{(j+i)},\partial_{x}^{i}(vv^{(j+1)})\rangle_{0}$.
Set $w=v^{(j)}$ for brevity. Then $I_{i}=\langle w^{(i)},\partial_{x}^{i}(vw_{x})\rangle_{0}$.
Integrating $\partial_{x}(vw^{2}/2)=vww_{x}+v_{x}w^{2}/2$, we obtain
\[
I_{0}=\langle w,vw_{x}\rangle_{0}=-\frac{1}{2}\int_{S^{1}}v_{x}w^{2}\,dx.
\]
Therefore the Schwarz inequality and Proposition \ref{prop:multiplication}
(ii) imply
\begin{align}
\left|I_{0}\right| & \le\frac{1}{2}\|v_{x}\|_{0}\|w^{2}\|_{0}\le\pi d_{1}\|v\|_{1}\|w\|_{0}\|w\|_{1}\le\pi d_{1}\|v\|_{2}\|w\|_{1}^{2}.\label{eq:I0}
\end{align}
Next, we have $I_{1}=\int_{S^{1}}w_{x}\partial_{x}(vw_{x})\,dx=-\int_{S^{1}}vw_{x}w_{xx}\,dx$
by integration by parts. Integrating $\partial_{x}(vw_{x}^{2}/2)=vw_{x}w_{xx}+v_{x}w_{x}^{2}/2$,
we obtain
\[
I_{1}=-\frac{1}{2}\int_{S^{1}}v_{x}w_{x}^{2}\,dx.
\]
Therefore we get, by the Schwarz inequality, Proposition \ref{prop:multiplication}
(ii) and Proposition \ref{prop:partial_x}, 
\begin{align}
\left|I_{1}\right| & \le\frac{1}{2}\|v_{x}\|_{0}\|w_{x}^{2}\|_{0}\le\pi d_{1}\|v\|_{1}\|w_{x}\|_{0}\|w_{x}\|_{1}\le4\pi^{3}d_{1}\|v\|_{2}\|w\|_{2}^{2}.\label{eq:I1}
\end{align}
The estimate of $I_{2}$ is a bit complicated. We divide it into a
sum of three terms as in 
\begin{align*}
I_{2} & =\langle w_{xx},\partial_{x}^{2}(vw_{x})\rangle_{0}=I_{21}+I_{22}+I_{23},
\end{align*}
where
\[
I_{21}=\langle w_{xx},v_{xx}w_{x}\rangle_{0},\,I_{22}=2\langle w_{xx},v_{x}w_{xx}\rangle_{0},\,I_{23}=\langle w_{xx},vw_{xxx}\rangle_{0}.
\]
We have 
\begin{align}
|I_{21}| & \le\|w_{xx}\|_{0}\|v_{xx}w_{x}\|_{0}\le d_{1}\|w_{xx}\|_{0}\|v_{xx}\|_{0}\|w_{x}\|_{1}\label{eq:I21}\\
 & \le32\pi^{5}d_{1}\|v\|_{2}\|w\|_{2}^{2},\nonumber \\
|I_{22}| & \le2\|w_{xx}\|_{0}\|v_{x}w_{xx}\|_{0}\le2d_{1}\|w_{xx}\|_{0}\|v_{x}\|_{1}\|w_{xx}\|_{0}\label{eq:I22}\\
 & \le64\pi^{5}d_{1}\|v\|_{2}\|w\|_{2}^{2}.\nonumber 
\end{align}
Integrating $\partial_{x}(vw_{xx}^{2}/2)=vw_{xx}w_{xxx}+v_{x}w_{xx}^{2}/2$,
we obtain
\begin{align*}
I_{23} & =\int_{S^{1}}vw_{xx}w_{xxx}\,dx=-\frac{1}{2}\int_{S^{1}}v_{x}w_{xx}^{2}\,dx=-\frac{1}{4}I_{22}.
\end{align*}
Hence 
\begin{equation}
|I_{23}|\le16\pi^{5}d_{1}\|v\|_{2}\|w\|_{2}^{2}.\label{eq:I23}
\end{equation}
The three inequalities \eqref{eq:I21}, \eqref{eq:I22} and \eqref{eq:I23}
give 
\begin{equation}
|I_{2}|\le112\pi^{5}d_{1}\|v\|_{2}\|w\|_{2}^{2}.\label{eq:I2}
\end{equation}
Combining \eqref{eq:P_j-1}, \eqref{eq:I0}, \eqref{eq:I1} and \eqref{eq:I2},
we obtain
\begin{equation}
|P_{j}|\le10\pi d_{1}\|v\|_{2}\|w\|_{2}^{2}=10\pi d_{1}\|v\|_{2}\|v^{(j)}\|_{2}^{2}\label{eq:I}
\end{equation}
and 
\begin{equation}
\left|\sum_{j=0}^{m}\frac{1}{j!^{2}}e^{4\pi\sigma j}P_{j}\right|\le10\pi d_{1}\|v\|_{2}\sum_{j=0}^{m}\frac{1}{j!^{2}}e^{4\pi\sigma j}\|v^{(j)}\|_{2}^{2}=20\pi d_{1}\|v\|_{2}\Phi_{\sigma,m}(v).\label{eq:sum P_j}
\end{equation}

Next we calculate $\sum_{j=1}^{m}j!^{-2}e^{4\pi\sigma j}Q_{j}$. Recall
\[
Q_{j}=\sum_{\ell=1}^{j}\binom{j}{\ell}\langle v^{(j)},v^{(\ell)}v^{(j-\ell+1)}\rangle_{2}\,(j\ge1),\;Q_{0}=0.
\]
By Proposition \ref{prop:multiplication} (iii), we have 
\begin{align*}
\|v^{(\ell)}v^{(j-\ell+1)}\|_{2} & \le\gamma\left(\|v^{(\ell)}\|_{2}\|v^{(j-\ell+1)}\|_{1}+\|v^{(\ell)}\|_{1}\|v^{(j-\ell+1)}\|_{2}\right)\\
 & \le2\pi\gamma\left(\|v^{(\ell)}\|_{2}\|v^{(j-\ell)}\|_{2}+\|v^{(\ell-1)}\|_{2}\|v^{(j-\ell+1)}\|_{2}\right).
\end{align*}
Combining this estimate with the Schwarz inequality, we get 
\begin{align*}
|Q_{j}| & \le2\pi\gamma(Q_{j,1}+Q_{j,2}),\\
Q_{j,1}= & \|v^{(j)}\|_{2}\sum_{\ell=1}^{j}\binom{j}{\ell}\|v^{(\ell)}\|_{2}\|v^{(j-\ell)}\|_{2},\\
Q_{j,2}= & \|v^{(j)}\|_{2}\sum_{\ell=1}^{j}\binom{j}{\ell}\|v^{(\ell-1)}\|_{2}\|v^{(j-\ell+1)}\|_{2}.
\end{align*}
Now we set $b_{k}=k!^{-1}e^{2\pi\sigma k}\|v^{(k)}\|_{2}\,(k=0,1,\dots,j)$.
Then we have
\begin{align*}
\frac{1}{j!^{2}}e^{4\pi\sigma j}Q_{j,1} & \le\sum_{\ell=1}^{j}b_{j}b_{\ell}b_{j-\ell},\\
\frac{1}{j!^{2}}e^{4\pi\sigma j}Q_{j,2} & \le\sum_{\ell=1}^{j}b_{j}\frac{b_{\ell-1}}{\ell}(j-\ell+1)b_{j-\ell+1}.
\end{align*}
Set $B=\left(\sum_{j=0}^{m}b_{j}^{2}\right)^{1/2},\widetilde{B}=\left(\sum_{j=1}^{m}jb_{j}^{2}\right)^{1/2}$.
Notice that $B^{2}=\|v\|_{(\sigma,2,m)}^{2}=2\Phi_{\sigma,m}(v),\,\widetilde{B}^{2}=(2\pi)^{-1}\partial_{\sigma}\Phi_{\sigma,m}(v)$.
The repeated use of the Schwarz inequality gives (\cite[Lemma 3.1]{Kato-Masuda})
\begin{align*}
 & \sum_{j=1}^{m}\frac{1}{j!^{2}}e^{4\pi\sigma j}Q_{j,1}\le\sum_{j=1}^{m}\sum_{\ell=1}^{j}b_{j}b_{\ell}b_{j-\ell}\le\sum_{\ell=1}^{m}\frac{b_{\ell}}{\sqrt{\ell}}\sum_{j=\ell}^{m}\sqrt{j}b_{j}b_{j-\ell}\\
 & \le B\widetilde{B}\sum_{\ell=1}^{m}\frac{\sqrt{\ell}b_{\ell}}{\ell}\le B\widetilde{B}^{2}\left(\sum_{\ell=1}^{m}\frac{1}{\ell^{2}}\right)^{1/2}\le\frac{\pi}{\sqrt{6}}B\widetilde{B}^{2},\\
 & \sum_{j=1}^{m}\frac{1}{j!^{2}}e^{4\pi\sigma j}Q_{j,2}\le\sum_{j=1}^{m}\sum_{\ell=1}^{j}b_{j}\frac{b_{\ell-1}}{\ell}(j-\ell+1)b_{j-\ell+1}\\
 & \le\sum_{\ell=1}^{m}\frac{b_{\ell-1}}{\ell}\sum_{j=\ell}^{m}\sqrt{j}b_{j}\sqrt{j-\ell+1}b_{j-\ell+1}\\
 & \le\widetilde{B}^{2}\sum_{\ell=1}^{m}\frac{b_{\ell-1}}{\ell}\le\widetilde{B}^{2}B\left(\sum_{\ell=1}^{m}\frac{1}{\ell^{2}}\right)^{1/2}\le\frac{\pi}{\sqrt{6}}B\widetilde{B}^{2}.
\end{align*}
Recalling $Q_{0}=0$, we obtain
\begin{equation}
\left|\sum_{j=0}^{m}\frac{1}{j!^{2}}e^{4\pi\sigma j}Q_{j}\right|\le\frac{(2\pi)^{2}\gamma}{\sqrt{6}}B\widetilde{B}^{2}=\frac{2\pi\gamma}{\sqrt{3}}\Phi_{\sigma,m}(v)^{1/2}\partial_{\sigma}\Phi_{\sigma,m}(v).\label{eq:sum Q_j}
\end{equation}
The inequalities \eqref{eq:sum P_j} and \eqref{eq:sum Q_j} give,
together with \eqref{eq:P_j and Q_j}, 
\begin{align}
 & \left|\sum_{j=0}^{m}\frac{1}{j!^{2}}e^{4\pi\sigma j}\langle v^{(j)},\partial_{x}^{j}(vv_{x})\rangle_{2}\right|\label{eq:estimates 1}\\
 & \le20\pi d_{1}\|v\|_{2}\Phi_{\sigma,m}(v)+\frac{2\pi\gamma}{\sqrt{3}}\Phi_{\sigma,m}(v)^{1/2}\partial_{\sigma}\Phi_{\sigma,m}(v).\nonumber 
\end{align}

\subsubsection{Estimate of $\langle v^{(j)},\partial_{x}^{j+1}A^{-1}\left[\mu(v)v\right]\rangle_{2}$}

We consider the second term of the right-hand side of \eqref{eq:three terms}.
By \eqref{eq:H2norm}, we have
\begin{align*}
\langle v^{(j)},\partial_{x}^{j+1}A^{-1}\left[\mu(v)v\right]\rangle_{2} & =\mu(v)\langle v^{(j)},A^{-1}v^{(j+1)}\rangle_{2}.\\
 & =\mu(v)\left(J_{0}+\frac{1}{2\pi^{2}}J_{1}+\frac{1}{(2\pi)^{4}}J_{2}\right),
\end{align*}
where $J_{i}=\langle v^{(j+i)},A^{-1}v^{(j+i+1)}\rangle_{0}\,(i=0,1,2)$.
Set $w=v^{(j)}$ for brevity. We have 
\[
J_{0}=\langle w,A^{-1}\partial_{x}w\rangle_{0},\,J_{1}=\langle w_{x},A^{-1}\partial_{x}^{2}w\rangle_{0},\,J_{2}=\langle w_{xx},A^{-1}\partial_{x}^{2}w_{x}\rangle_{0}.
\]
 Since $A^{-1}\partial_{x}^{i}:H^{0}\to H^{0}\,(i=0,1,2)$ are bounded
operators whose norms are equal to $1$, we have 
\begin{align*}
|\mu(v)J_{0}| & \le\|v\|_{0}\|w\|_{0}\|A^{-1}\partial_{x}w\|_{0}\le\|v\|_{2}\|w\|_{2}^{2},\\
|\mu(v)J_{1}| & \le\|v\|_{0}\|w_{x}\|_{0}\|A^{-1}\partial_{x}^{2}w\|_{0}\le2\pi\|v\|_{2}\|w\|_{2}^{2},\\
|\mu(v)J_{2}| & \le\|v\|_{0}\|w_{xx}\|_{0}\|A^{-1}\partial_{x}^{2}w_{x}\|_{0}\le(2\pi)^{3}\|v\|_{2}\|w\|_{2}^{2}.
\end{align*}
Therefore we get 
\begin{align*}
\left|\langle v^{(j)},\partial_{x}^{j+1}A^{-1}\left[\mu(v)v\right]\rangle_{2}\right| & \le[1+1/\pi+1/(2\pi)]\|v\|_{2}\|v^{(j)}\|_{2}^{2}\\
 & \le2\|v\|_{2}\|v^{(j)}\|_{2}^{2}
\end{align*}
 and 
\begin{equation}
\left|\sum_{j=0}^{m}\frac{1}{j!^{2}}e^{4\pi\sigma j}\langle v^{(j)},\partial_{x}^{j+1}A^{-1}\left[\mu(v)v\right]\rangle_{2}\right|\le4\|v\|_{2}\Phi_{\sigma,m}(v).\label{eq:estimates 2}
\end{equation}

\subsubsection{Estimate of $\langle v^{(j)},\partial_{x}^{j+1}A^{-1}(v_{x}^{2})\rangle_{2}$}

We consider the third term  of the right-hand side of \eqref{eq:three terms}.

First, assume $j=0$. Since the norm of $\partial_{x}A^{-1}\colon H^{1}\to H^{2}$
does not exceed 1, we have 
\begin{align}
 & \left|\langle v,\partial_{x}A^{-1}(v_{x}^{2})\rangle_{2}\right|\le\|v\|_{2}\|\partial_{x}A^{-1}(v_{x}^{2})\|_{2}\le\|v\|_{2}\|v_{x}^{2}\|_{1}\label{eq:estimate3-1}\\
 & \le c_{1}\|v\|_{2}\|v_{x}\|_{1}^{2}\le(2\pi)^{2}c_{1}\|v\|_{2}^{3}\le8\pi^{2}c_{1}\|v\|_{2}\Phi_{\sigma,m}(v).\nonumber 
\end{align}

Next we assume $j\ge1$. We have
\begin{align}
\langle v^{(j)},\partial_{x}^{j+1}A^{-1}(v_{x}^{2})\rangle_{2} & =\langle v^{(j)},(\partial_{x}^{2}A^{-1})\partial_{x}^{j-1}(v_{x}^{2})\rangle_{2}\label{eq:v^(j) pairing v_x^2}\\
 & =\sum_{\ell=0}^{j-1}\binom{j-1}{\ell}\langle v^{(j)},(\partial_{x}^{2}A^{-1})(v^{(\ell+1)}v^{(j-\ell)})\rangle_{2}\nonumber \\
 & =\sum_{\ell=1}^{j}\binom{j-1}{\ell-1}\langle v^{(j)},(\partial_{x}^{2}A^{-1})(v^{(\ell)}v^{(j-\ell+1)})\rangle_{2}.\nonumber 
\end{align}
This is similar to $Q_{j}$ in \eqref{eq:Q_j}. Since the norm of
$\partial_{x}^{2}A^{-1}\colon H^{2}\to H^{2}$ is 1, this operator
can be neglected in estimating $\langle v^{(j)},(\partial_{x}^{2}A^{-1})(v^{(\ell)}v^{(j-\ell+1)})\rangle_{2}$.
Moreover we have
\[
\binom{j-1}{\ell-1}\le\binom{j}{\ell}\,(j\ge1).
\]
We can follow \eqref{eq:sum Q_j} for $j\ge1$ and employ \eqref{eq:estimate3-1}
for $j=0$ (Recall $Q_{0}=0$). Finally we obtain 
\begin{align}
 & \left|\sum_{j=0}^{m}\frac{1}{j!^{2}}e^{4\pi\sigma j}\langle v^{(j)},\partial_{x}^{j+1}A^{-1}(v_{x}^{2})\rangle_{2}\right|\label{eq:estimates 3}\\
 & \le8\pi^{2}c_{1}\|v\|_{2}\Phi_{\sigma,m}(v)+\frac{2\pi\gamma}{\sqrt{3}}\Phi_{\sigma,m}(v)^{1/2}\partial_{\sigma}\Phi_{\sigma,m}(v).\nonumber 
\end{align}
It completes the proof of Proposition \ref{prop:Liapnov}.

\subsection{Analyticity in the space variable\label{subsec:Analyticity x}}

In this subsection, we prove a part of Theorem \ref{thm:muCH main}.
We assume that $u_{0}\in A(r_{0})$ is as in Theorem \ref{thm:muCH main}.
Then we can apply Theorem \ref{thm:KLM} with arbitrarily large $s$.
We will prove the analyticity of $u(t)$ in $x$ for each fixed $t$.
\begin{prop}
\label{prop:u analytic in x}Let $T>0$ be given. We have $u(t)\in A(e^{2\pi\sigma(t)})\subset A(e^{2\pi\sigma(T)})$
for $t\in[-T,T]$, where the function $\sigma(t)$ is defined in Theorem
\ref{thm:muCH main}.
\end{prop}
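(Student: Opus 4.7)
The plan is to invoke Corollary \ref{cor:KatoMasuda} for each fixed $m$ with the truncated Liapnov family $\{\Phi_{\sigma,m}\}_\sigma$, then let $m\to\infty$ and appeal to Proposition \ref{prop:KatoMasudaLem2.2} to recover the analyticity statement. Since $u_0$ is real-analytic on $S^1$, Proposition \ref{prop:analyticity} gives $u_0\in H^s(S^1)$ for every $s$, and $u_0\in A(r_0)$ together with Proposition \ref{prop:KatoMasudaLem2.2} gives $\|u_0\|_{(\sigma_0,2)}<\infty$ for $\sigma_0<(\log r_0)/(2\pi)$. Theorem \ref{thm:KLM} then furnishes a unique global solution $u\in\mathcal{C}(\mathbb{R};H^s(S^1))$ for every $s>5/2$, and in particular $M_T:=\max_{t\in[-T,T]}\|u(t)\|_2<\infty$.

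Fix $m\ge 0$ and take $X=H^{m+2}$, $Z=H^{m+5}$, $\mathcal{O}_m=\{v\in Z:\|v\|_2<1+M_T\}$, $F=F_\mu$, and the scale parameter identified with $\sigma$ via $\Phi_\sigma:=\Phi_{\sigma,m}$. Conditions (a) and (b) of Theorem \ref{thm:KatoMasuda weaker} are straightforward since $\Phi_{\sigma,m}$ is a finite sum of smooth quadratic forms in $v$ with smooth $\sigma$-dependent weights, and $F_\mu\colon Z\to X$ is continuous by Propositions \ref{prop:smoothing} and \ref{prop:multiplication}(i). Condition (c) is furnished by Proposition \ref{prop:Liapnov}: on $\mathcal{O}_m$ the factor $\|v\|_2$ is controlled by $1+M_T$, giving the Kato--Masuda constants $K=(20\pi d_1+8+4\pi^2 c_1)(1+M_T)$ and $L=\sqrt{3}\pi\gamma$, independent of $m$. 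Finally, $u\in\mathcal{C}([-T,T];\mathcal{O}_m)\cap\mathcal{C}^1([-T,T];X)$ follows from $u\in\mathcal{C}(\mathbb{R};H^s)$ for all $s$ combined with the equation $u_t=F_\mu(u)$.

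Applying Corollary \ref{cor:KatoMasuda} with $s_0=\sigma_0$ yields
\begin{equation*}
\Phi_{s_m(t),m}(u(t))\le \Phi_{\sigma_0,m}(u_0)\,e^{K|t|}\le \tfrac{1}{2}\|u_0\|_{(\sigma_0,2)}^2\,e^{K|t|},\qquad t\in[-T,T],
\end{equation*}
where $s_m(t)=\sigma_0-\tfrac{2L\Phi_{\sigma_0,m}(u_0)^{1/2}}{K}(e^{K|t|/2}-1)$. Because $\Phi_{\sigma_0,m}(u_0)^{1/2}\le \tfrac{1}{\sqrt{2}}\|u_0\|_{(\sigma_0,2)}$ and $\sqrt{2}\,L=\sqrt{6}\pi\gamma$, one has $s_m(t)\ge\sigma(t)$; since $\Phi_{\sigma,m}(v)$ is non-decreasing in $\sigma$, this gives the uniform-in-$m$ bound $\Phi_{\sigma(t),m}(u(t))\le \tfrac12\|u_0\|_{(\sigma_0,2)}^2 e^{K|t|}$. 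Monotone convergence as $m\to\infty$ yields $\|u(t)\|_{(\sigma(t),2)}^2\le \|u_0\|_{(\sigma_0,2)}^2\,e^{K|t|}<\infty$, so Proposition \ref{prop:KatoMasudaLem2.2} gives $u(t)\in A(e^{2\pi\sigma(t)})$; monotonicity of $\sigma(t)$ in $|t|$ yields the stated inclusion in $A(e^{2\pi\sigma(T)})$.

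The main obstacle is that the estimate of Proposition \ref{prop:Liapnov} contains the factor $\|v\|_2$, which must be converted into an honest constant in order to fit the Kato--Masuda template; this is what forces the use of the a priori $H^2$-bound $M_T$ coming from Theorem \ref{thm:KLM} and the restriction of the abstract scheme to $\mathcal{O}_m$, and it is precisely this that makes $K$ depend on $T$ through $M_T$. Everything else (continuity of $F_\mu$, differentiability of $\Phi_{\sigma,m}$ in $(\sigma,v)$, the monotone passage $m\to\infty$) is routine.
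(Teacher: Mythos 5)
Your proposal is correct and follows essentially the same route as the paper: fix $m$, apply the Kato--Masuda scheme (Corollary \ref{cor:KatoMasuda}) to the truncated functionals $\Phi_{\sigma,m}$ on the set $\mathcal{O}$ cut out by the a priori $H^{2}$ bound from Theorem \ref{thm:KLM}, with $K$ and $L$ read off from Proposition \ref{prop:Liapnov}, and then pass to $m\to\infty$ and invoke Proposition \ref{prop:KatoMasudaLem2.2}. The only cosmetic difference is that you replace the paper's $m$-dependent radii $\sigma_m(t)$ and Fatou's lemma by the monotonicity of $\Phi_{\sigma,m}$ in $\sigma$ followed by monotone convergence in $m$, which is an equivalent way of taking the limit.
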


\begin{proof}
Theorem \ref{thm:KLM} implies $u(t)\in H^{\infty}$. Set 
\begin{align*}
\mu_{0} & =1+\max\left\{ \|u(t)\|_{2};\,t\in[-T,T]\right\} ,\\
\mathcal{O} & =\left\{ v\in H^{m+5};\,\|v\|_{2}<\mu_{0}\right\} ,\\
K & =(20\pi d_{1}+8+4\pi^{2}c_{1})\mu_{0}.
\end{align*}
Then $u(t)\in\mathcal{O}$ for $t\in[-T,T]$. Proposition \ref{prop:Liapnov}
implies that 
\begin{align}
\left|\left\langle F_{\mu}(v),D\Phi_{\sigma,m}(v)\right\rangle \right| & \le K\Phi_{\sigma,m}(v)+\sqrt{3}\pi\gamma\Phi_{\sigma,m}(v)^{1/2}\partial_{\sigma}\Phi_{\sigma,m}(v)\label{eq:liapnov F_mu-1}
\end{align}
holds for any $v\in\mathcal{O}.$ Fix $\sigma_{0}$ with $e^{2\pi\sigma_{0}}<r_{0}$.
Then $u_{0}\in A(r_{0})$ implies $\|u_{0}\|_{(\sigma_{0},2)}<\infty$
by Proposition \ref{prop:KatoMasudaLem2.2}.

Set 
\begin{align*}
\rho_{m}(t) & =\frac{1}{2}\|u_{0}\|_{(\sigma_{0},2,m)}^{2}e^{K|t|}=\Phi_{\sigma_{0},m}(u_{0})e^{K|t|},\\
\sigma_{m}(t) & =\sigma_{0}-\int_{0}^{|t|}\sqrt{3}\pi\gamma\rho_{m}(\tau)^{1/2}\,d\tau\\
 & =\sigma_{0}-\frac{\sqrt{6}\pi\gamma}{K}\|u_{0}\|_{(\sigma_{0},2,m)}(e^{K|t|/2}-1).
\end{align*}
These functions correspond to $r(t)$ and $s(t)$ in Corollary \ref{cor:KatoMasuda}.
Similarly, set
\begin{align*}
\rho(t) & =\frac{1}{2}\|u_{0}\|_{(\sigma_{0},2)}^{2}e^{K|t|},\\
\sigma(t) & =\sigma_{0}-\int_{0}^{|t|}\sqrt{3}\pi\gamma\rho(\tau)^{1/2}\,d\tau\\
 & =\sigma_{0}-\frac{\sqrt{6}\pi\gamma}{K}\|u_{0}\|_{(\sigma_{0},2)}(e^{K|t|/2}-1).
\end{align*}
We have $\rho_{m}(t)\le\rho_{m+1}(t)\le\rho(t)$, $\rho_{m}(t)\to\rho(t)\,(\text{as }m\to\infty)$
and $\sigma_{m}(t)\ge\sigma_{m+1}(t)\ge\sigma(t)$, $\sigma_{m}(t)\to\sigma(t)\,(\text{as }m\to\infty)$.
We can apply Corollary \ref{cor:KatoMasuda} and obtain
\begin{equation}
\Phi_{\sigma_{m}(t),m}(u(t))\le\rho_{m}(t)\le\rho(t),\quad t\in[-T,T].\label{eq:Liaponov conclusion}
\end{equation}
By Fatou's Lemma, 
\begin{align*}
\|u(t)\|_{(\sigma(t),2)} & =\sum_{j=0}^{\infty}\frac{1}{j!^{2}}e^{4\pi j\sigma(t)}\|\partial_{x}^{j}u(t)\|_{2}^{2}\le\liminf_{m\to\infty}\sum_{j=0}^{m}\frac{1}{j!^{2}}e^{4\pi j\sigma_{m}(t)}\|\partial_{x}^{j}u(t)\|_{2}^{2}\\
 & =2\liminf_{m\to\infty}\Phi_{\sigma_{m}(t),m}\left(u(t)\right)\le2\rho(t)<\infty.
\end{align*}
Therefore $u(t)\in A(e^{2\pi\sigma(t)})\subset A(e^{2\pi\sigma(T)})$
for $t\in[-T,T]$ by Proposition \ref{prop:KatoMasudaLem2.2}.
\end{proof}
\begin{prop}
The mapping $[-T,T]\to A(e^{2\pi\sigma(T)}),\,t\mapsto u(t)$ is continuous.
\end{prop}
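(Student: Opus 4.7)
The plan is to apply Proposition \ref{prop:KatoMasudaLem2.4} to the family $\{u(t)\}_{t\in[-T,T]}$, leveraging a uniform $(\sigma(T),2)$-norm bound extracted from the proof of Proposition \ref{prop:u analytic in x} together with $H^{\infty}$-continuity in $t$ inherited from Theorem \ref{thm:KLM}. The topology on $A(e^{2\pi\sigma(T)})$ is understood in the sense of Corollary \ref{cor:Frechet}.

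\emph{Uniform bound.} Since $\sigma(t)$ is non-increasing in $|t|$, we have $\sigma(t)\ge\sigma(T)$ for every $t\in[-T,T]$. Combining this monotonicity with \eqref{eq:Liaponov conclusion} and Fatou's Lemma, exactly as at the end of the proof of Proposition \ref{prop:u analytic in x}, yields
\[
\|u(t)\|_{(\sigma(T),2)}^{2}\le\|u(t)\|_{(\sigma(t),2)}^{2}\le 2\rho(T),\qquad t\in[-T,T].
\]

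\emph{Continuity in $H^{\infty}$.} Because $u_{0}$ is real-analytic, it lies in $H^{s}(S^{1})$ for every $s\ge0$. Applying Theorem \ref{thm:KLM} separately for each $s>5/2$ and using the uniqueness assertion of that theorem, we conclude that the global solution actually satisfies $u\in\mathcal{C}(\mathbb{R};H^{s})$ for every $s\ge0$; hence $t\mapsto u(t)$ is continuous into $H^{\infty}(S^{1})$. In particular, whenever $t_{n}\to t_{0}$ in $[-T,T]$, one has $u(t_{n})\to u(t_{0})$ in every $H^{s}$.

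\emph{Passage to $A(e^{2\pi\sigma(T)})$.} Fix any $\sigma'<\sigma(T)$ and select an auxiliary $\sigma''\in(\sigma',\sigma(T))$. Apply Proposition \ref{prop:KatoMasudaLem2.4} with $\delta=e^{2\pi\sigma(T)}$, index $\sigma''$, and $s=2$: the strict inequality $e^{2\pi\sigma''}<\delta$ holds, the uniform bound gives $\sup_{n}\|u(t_{n})\|_{(\sigma'',2)}\le(2\rho(T))^{1/2}<\infty$, and $H^{\infty}$-convergence was established above. The proposition then gives $\|u(t_{n})-u(t_{0})\|_{(\sigma',2)}\to0$. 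Since $\sigma'<\sigma(T)$ was arbitrary and Corollary \ref{cor:Frechet} identifies the family $\{\|\cdot\|_{(\sigma',2)}\}_{\sigma'<\sigma(T)}$ with the Fr\'echet topology of $A(e^{2\pi\sigma(T)})$, continuity follows.

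The only non-routine point is the bookkeeping in the final step: even at the endpoints $t=\pm T$ we only know $u(t)\in A(e^{2\pi\sigma(T)})$, so we cannot directly set $\sigma$ in Proposition \ref{prop:KatoMasudaLem2.4} equal to $\sigma(T)$. Inserting the strict cushion $\sigma'<\sigma''<\sigma(T)$ is what reconciles the hypothesis $e^{2\pi\sigma''}<\delta$ with the available regularity while still giving convergence in every admissible seminorm.
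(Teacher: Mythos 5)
Your proof is correct and follows essentially the same route as the paper: a uniform bound on $\|u(t)\|_{(\sigma(T),2)}$ from the Liapunov estimate, $H^{\infty}$-continuity from the Sobolev well-posedness, and then Proposition \ref{prop:KatoMasudaLem2.4} together with Corollary \ref{cor:Frechet}. Your insertion of the intermediate index $\sigma''\in(\sigma',\sigma(T))$ is a small extra precaution (the paper applies the lemma directly at $\sigma(T)$), and it is harmless and correct.
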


\begin{proof}
Let $\left\{ t_{n}\right\} \subset[-T,T]$ be a sequence converging
to $t_{\infty}\in[-T,T]$. We have $u(t_{n})\to u(t_{\infty})$ in
$H^{\infty}$. On the other hand, $\|u(t_{n})\|_{(\sigma(T),2)}\,(n\ge0)$
is bounded since
\[
\|u(t)\|_{(\sigma(T),2)}\le\|u(t)\|_{(\sigma(t),2)}\le\rho(t)\le\rho(T).
\]
Proposition \ref{prop:KatoMasudaLem2.4} implies that $u(t_{n})$
converges to $u(t_{\infty})$ with respect to $\|\cdot\|_{(\sigma',2)}\,(\sigma'<\sigma(T))$.
By Corollary \ref{cor:Frechet}, this means convergence in $A(e^{2\pi\sigma(T)})$.
\end{proof}

\subsection{Analyticity in the space and time variables\label{subsec:Analyticity t, x}}

We continue the proof of Theorem \ref{thm:muCH main}. In the previous
subsection, we have established the analyticity in the space variable.
Here, we will prove the analyticity in the space and time variables.
By convention, a real-analytic function on a closed interval is real-analytic
on some open neighborhood of the closed interval.
\begin{prop}
Under the situation of Theorem \ref{thm:muCH main}, for any $T>0$,
there exists $\delta_{T}>0$ such that we have $u\in\mathcal{C}^{\omega}([-T,T],A(\delta_{T}))$.
\end{prop}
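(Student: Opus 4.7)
The plan is to combine the spatial analyticity established in Proposition \ref{prop:u analytic in x} with the local-in-time existence theorem (Theorem \ref{thm:muCH IVP2}) applied around each base time $t_0\in[-T,T]$, and then patch the resulting holomorphic-in-$t$ extensions to the global Sobolev solution via the uniqueness assertion of Theorem \ref{thm:KLM}. The crux is a bound on the $G^{\Delta_0,s+1}$-norm of $u(t_0)$ that is uniform in $t_0\in[-T,T]$, since this is what forces a uniform lower bound on the complex $t$-lifespan in Theorem \ref{thm:muCH IVP2} and makes a finite-cover argument work.

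Fix $s>1/2$ and choose $\Delta_0<\Delta_1<e^{2\pi\sigma(T)}$. By Proposition \ref{prop:u analytic in x} together with the continuity statement following it, the set $\{u(t_0):t_0\in[-T,T]\}$ is compact in $A(e^{2\pi\sigma(T)})$. Corollary \ref{cor:Frechet} tells us $\sup_{z\in S(\Delta_1)}|f(z)|$ is a continuous seminorm on $A(e^{2\pi\sigma(T)})$, so
\[
M_0\;:=\;\sup_{t_0\in[-T,T]}\;\sup_{z\in S(\Delta_1)}|u(t_0,z)|\;<\;\infty.
\]
The contour-shift computation used in the proof of Proposition \ref{prop:analyticity} then yields $|\widehat{u(t_0)}(k)|\le M_0 e^{-2\pi\Delta_1|k|}$ uniformly in $t_0$, and hence
\[
N_0\;:=\;\sup_{t_0\in[-T,T]}\|u(t_0)\|_{\Delta_0,s+1}\;\le\;M_0\Bigl(\sum_{k\in\mathbb{Z}}\langle k\rangle^{2(s+1)}e^{-4\pi(\Delta_1-\Delta_0)|k|}\Bigr)^{1/2}\;<\;\infty.
\]

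For each $t_0\in[-T,T]$, apply Theorem \ref{thm:muCH IVP2} (after the translation $\tau=t-t_0$, valid by autonomy of the PDE) to the Cauchy problem with datum $u(t_0)\in G^{\Delta_0,s+1}$. Fixing $d\in(0,1)$, this yields a holomorphic map
\[
\tilde u\colon D\bigl(t_0,T_{\Delta_0}(1-d)\bigr)\longrightarrow G^{\Delta_0 d,s+1},
\]
and Theorem \ref{thm:muCH IVP2} furnishes a constant $C_0=C_0(\Delta_0,s)>0$ such that $T_{\Delta_0}\ge C_0/\|u(t_0)\|_{\Delta_0,s+1}\ge C_0/N_0=:\tau>0$, independent of $t_0$. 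Proposition \ref{prop:embed G A} shows $\tilde u$ takes values in $A(\Delta_0 d)\subset H^\infty$, so the restriction of $\tilde u$ to real $t$ is an $H^s$-solution of \eqref{eq:muCH IVP} attaining the value $u(t_0)$ at $t=t_0$; by the uniqueness in Theorem \ref{thm:KLM}, $\tilde u(t)=u(t)$ on $(t_0-\tau(1-d),t_0+\tau(1-d))\cap[-T,T]$.

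Setting $\delta_T:=\Delta_0 d$, we have exhibited, around every $t_0\in[-T,T]$, a holomorphic extension of $u$ to a complex disk of uniform radius $\tau(1-d)$ with values in the Banach space $G^{\delta_T,s+1}$, hence (by Proposition \ref{prop:embed G A}) in $A(\delta_T)$. This gives $u\in\mathcal{C}^{\omega}([-T,T],A(\delta_T))$. The main obstacle in this plan is the uniform estimate $N_0<\infty$: without it the local analytic lifespan $T_{\Delta_0}$ could shrink to zero along a sequence of base times in $[-T,T]$, and the patching would break down. The other ingredients (autonomy used for the time translation, uniqueness for $H^s$-solutions, and the trivial fact that holomorphy into $G^{\delta_T,s+1}$ implies holomorphy into $A(\delta_T)$) are standard.
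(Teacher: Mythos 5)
Your proof is correct, but it takes a genuinely different route from the paper's. The paper runs a continuation argument: it defines $S=\{T>0:\,u\in\mathcal{C}^{\omega}([-T,T],A(\delta_{T}))\}$, sets $T^{*}=\sup S$, and derives a contradiction from $T^{*}<\infty$ by invoking Proposition \ref{prop:u analytic in x} only at the single time $T^{*}$ to restart Theorem \ref{thm:muCH IVP2} there; it never needs a bound on $\|u(t_0)\|_{\Delta_0,s+1}$ that is uniform over $[-T,T]$, nor the continuity of $t\mapsto u(t)$ in $A(e^{2\pi\sigma(T)})$. You instead make the argument direct and quantitative: compactness of the orbit in $A(e^{2\pi\sigma(T)})$ (via the continuity proposition and Corollary \ref{cor:Frechet}), the contour-shift Fourier estimate, and the explicit lifespan formula $T_{\Delta_0}=\mathrm{const.}/\|u(t_0)\|_{\Delta_0,s+1}$ combine to give complex time-disks of a \emph{uniform} radius $\tau(1-d)$ covering $[-T,T]$, which you then identify with $u$ by the local uniqueness of Theorem \ref{thm:KLM} exactly as the paper does. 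What your version buys is an explicit uniform lower bound on the radius of $t$-analyticity along $[-T,T]$ and the avoidance of a proof by contradiction; what the paper's version buys is brevity and fewer ingredients (no uniform bound $N_0$, no compactness). Two cosmetic points: the contour in your step 3 should be placed at height $\Delta_1'$ with $\Delta_0<\Delta_1'<\Delta_1$ so that the sup over the open strip $S(\Delta_1)$ controls it (this changes nothing), and one could obtain $N_0<\infty$ without compactness directly from the uniform Kato--Masuda bound $\|u(t)\|_{(\sigma(T),2)}^2\le 2\rho(T)$ together with \eqref{eq:f^(j)}. Neither affects the validity of your argument.
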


\begin{proof}
We have $u_{0}\in G^{\Delta,s+1}$ for any $\Delta<r_{0}$. Let $s>1/2$.
By Theorem \ref{thm:muCH IVP2}, there exists $\widetilde{T}>0$ such
that the Cauchy problem \eqref{eq:muCH IVP} has a unique solution
$\tilde{u}\in\mathcal{C}^{\omega}(|t|\le\widetilde{T}(1-d),G^{\Delta d,s+1})$
for $0<d<1$. We have $\tilde{u}=u$ by the local uniqueness, where
$u$ is the solution in Theorem \ref{thm:KLM}. Set $d=1/2,\,\widehat{T}=\widetilde{T}/2$.
Then $u=\tilde{u}\in\mathcal{C}^{\omega}(|t|\le\widehat{T},G^{\Delta/2,s+1})$.
By Proposition \ref{prop:embed G A}, a convergent series in $G^{\Delta/2,s+1}$
is convergent in $A(\Delta/2)$. We have $u\in\mathcal{C}^{\omega}(|t|\le\widehat{T},A(\Delta/2))$.

We have shown that $u$ is analytic in $t$ at least locally. Our
next step is to show that $u$ is analytic in $t$ globally. Set 
\begin{align*}
 & S=\left\{ T>0;\,u\in\mathcal{C}^{\omega}([-T,T],A(\delta_{T}))\;\text{for}\;\exists\delta_{T}>0\right\} \ni\widehat{T},\\
 & T^{*}=\sup S\ge\widehat{T}.
\end{align*}
We prove $T^{*}=\infty$ by contradiction. Assume $T^{*}<\infty$.
By Proposition \ref{prop:u analytic in x}, $u(T^{*})$ is well-defined
and there exists $\delta^{*}>0$ such that 
\[
u(T^{*})\in A(\delta^{*})\subset G^{\delta',s+1}\,(0<\delta'<\delta^{*}).
\]
 By Theorem \ref{thm:muCH IVP2} (with $t$ replaced with $t-T^{*}$),
there exists $\varepsilon>0$ and $\hat{u}\in\mathcal{C}^{\omega}([T^{*}-\varepsilon,T^{*}+\varepsilon],G^{\delta'/2,s+1})$
such that 
\begin{align*}
 & \hat{u}_{t}+\frac{1}{2}(\hat{u}^{2})_{x}+\partial_{x}A^{-1}\left[2\mu(\hat{u})\hat{u}+\frac{1}{2}\hat{u}_{x}^{2}\right]=0,\\
 & \hat{u}|_{t=T^{*}}=u(T^{*}).
\end{align*}
By the local uniqueness, we have $\hat{u}=u$. Namely, $\hat{u}$
is an extension of $u$ up to $|t|\le T^{*}+\varepsilon$ (valued
in $G^{\delta'/2,s+1}\subset A(\delta'/2)$). Therefore $T^{*}+\varepsilon\in S$.
This is a contradiction.
\end{proof}
\begin{prop}
\label{prop:global analyticity}Under the situation of Theorem \ref{thm:muCH main},
the Cauchy problem \eqref{eq:muCH IVP} has a unique solution $u\in\mathcal{C}^{\omega}(\mathbb{R}_{t}\times S_{x}^{1})$.
\end{prop}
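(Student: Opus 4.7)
The plan is to upgrade the previous proposition---which asserts $u\in \mathcal{C}^{\omega}([-T,T],A(\delta_T))$ for every $T>0$, the analyticity in $t$ being really into a Banach space $G^{\delta_T,s+1}\hookrightarrow A(\delta_T)$---to joint real-analyticity on $\mathbb{R}_t\times S_x^1$ by combining Cauchy-type estimates in $t$ and in $x$ into a two-parameter bound, and then invoking Komatsu's characterization of real-analytic functions \cite{Komatsu}. Uniqueness will be immediate: every $C^\omega$ solution is a fortiori a Sobolev solution of the class in Theorem \ref{thm:KLM}, which provides uniqueness.

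Fix $T>0$. By the previous proposition, $u$ is a real-analytic curve from $[-T,T]$ into $G^{\delta_T,s+1}$ for some $\delta_T>0$ and some $s>1/2$. Standard Banach-valued Cauchy estimates, applied at each $t_0\in[-T,T]$, together with a covering/compactness argument on the compact interval, yield a uniform radius $\tau>0$ and a constant $C_1>0$ such that
\[
\|(\partial_t^k u)(t_0)\|_{\delta_T,s+1}\le C_1\,k!\,\tau^{-k},\qquad t_0\in[-T,T],\ k\ge 0.
\]
Since $G^{\delta_T,s+1}\hookrightarrow A(\delta_T)$ by Proposition \ref{prop:embed G A}, each $(\partial_t^k u)(t_0,\cdot)$ is holomorphic on the strip $|\Im z|<\delta_T$. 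Fixing any $\delta_T'\in(0,\delta_T)$ and applying the Cauchy integral formula along the boundary of the narrower strip, we get
\[
\sup_{x\in S^1}\bigl|(\partial_x^j\partial_t^k u)(t_0,x)\bigr|\le C_2\,j!\,(\delta_T')^{-j}\,\|(\partial_t^k u)(t_0)\|_{\delta_T,s+1}.
\]

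Combining the two bounds, there exist constants $C,L>0$ (depending on $T$) such that
\[
\sup_{(t,x)\in[-T,T]\times S^1}\bigl|(\partial_t^k\partial_x^j u)(t,x)\bigr|\le C\,L^{k+j}\,k!\,j!,\qquad k,j\ge 0.
\]
This is exactly the growth estimate on mixed derivatives over compact sets that characterizes joint real-analyticity in the criterion of \cite{Komatsu}; hence $u$ is real-analytic on a neighbourhood of $[-T,T]\times S^1$. Since $T>0$ is arbitrary, $u\in\mathcal{C}^\omega(\mathbb{R}_t\times S_x^1)$. Uniqueness follows by remarking that any two $C^\omega$ solutions with the same Cauchy datum are Sobolev solutions in the sense of Theorem \ref{thm:KLM}, so they must coincide.

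The main obstacle is the compactness/uniformity step: turning the pointwise-in-$t_0$ analyticity radii supplied by Theorem \ref{thm:muCH IVP2} into a single $\tau>0$ and a single strip-width $\delta_T'>0$ that work uniformly on $[-T,T]\times S^1$. The other Cauchy estimate in $x$ and the final appeal to \cite{Komatsu} are essentially mechanical once this uniformity is secured.
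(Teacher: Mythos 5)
Your proposal is correct and follows essentially the same route as the paper: the paper likewise combines a Cauchy estimate in $t$ (coming from the $A(\delta_{T})$-valued analyticity of $t\mapsto u(t)$ on the compact interval $[-T,T]$, which gives $\|\partial_{t}^{j}u(t)\|_{(\sigma,0)}\le C_{0}^{j+1}j!$) with a Cauchy estimate in $x$ (from holomorphy on a strip) to reach the mixed bound $\|\partial_{x}^{k}\partial_{t}^{j}u\|\le C^{j+k+1}(j+k)!$, and it obtains uniqueness exactly as you do, from uniqueness in the Sobolev/$H^{\infty}$ class. The one point to fix is the very last step: the estimate $\sup_{K}|\partial_{t}^{k}\partial_{x}^{j}u|\le C L^{j+k}\,j!\,k!$ is \emph{not} the criterion of \cite{Komatsu} as stated in Theorem \ref{thm:Komatsu}, which concerns $L^{2}$ bounds on iterates of an elliptic operator, $\|P^{\ell}f\|_{L^{2}(K)}\le M(A\ell)^{m\ell}$. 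Your bound is instead the classical derivative-growth characterization of real analyticity (Taylor remainder estimate), which is perfectly adequate and in fact renders the appeal to \cite{Komatsu} unnecessary; alternatively, to invoke Theorem \ref{thm:Komatsu} literally you would, as the paper does, convert the mixed-derivative bounds into $L^{2}$ bounds on $\Delta^{\ell}u$ via the binomial expansion of $(\partial_{x}^{2}+\partial_{t}^{2})^{\ell}$. Either ending closes the argument; just do not attribute the derivative-growth criterion to \cite{Komatsu}.
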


\begin{proof}
The uniqueness in $H^{\infty}$ implies the uniqueness in the real-analytic
category.

Let $T$ be fixed. For $r>0$ sufficiently small, we have 
\[
\partial_{t}^{j}u(t)=\frac{j!}{2\pi i}\int_{|\tau-t|=r}\frac{u(\tau)}{(\tau-t)^{j+1}}\,d\tau,\;t\in[-T,T].
\]
The integral is performed in $A(\delta_{T})$ and converges with respect
to $\|\cdot\|_{(\sigma,0)}\,(e^{2\pi\sigma}<\delta_{T})$. By Cauchy's
estimate, there exists $C_{0}>1/r$ such that 
\begin{align*}
\|\partial_{t}^{j}u(t)\|_{(\sigma,0)} & \le C_{0}j!r^{-j}<C_{0}^{j+1}j!.
\end{align*}
Therefore we have 
\begin{align*}
\|\partial_{x}^{k}\partial_{t}^{j}u(\cdot,t)\|_{0} & \le C_{0}^{j+1}e^{-2\pi k\sigma}j!k!
\end{align*}
and there exists $C>0$ such that 
\begin{equation}
\left\Vert \partial_{x}^{k}\partial_{t}^{j}u\right\Vert _{L^{2}(S^{1}\times[-T,T])}\le\sqrt{2T}C^{j+k+1}(j+k)!.\label{eq:Komatsu}
\end{equation}
 Set $\Delta=\partial^{2}/\partial x^{2}+\partial^{2}/\partial t^{2}$.
The binomial expansion of $\Delta^{\ell}\,(\ell=0,1,2,\dots)$ and
\eqref{eq:Komatsu} yield
\begin{align*}
\left\Vert \Delta^{\ell}u\right\Vert _{L^{2}(S^{1}\times[-T,T])} & \le\sqrt{2T}C^{2\ell+1}(2\ell)!\sum_{p=0}^{\ell}\binom{\ell}{p}\\
 & \le\sqrt{2T}C^{2\ell+1}(2\ell)!2^{\ell}\le\sqrt{2T}C(\sqrt{2}C\ell)^{2\ell}.
\end{align*}
This estimates implies the real-analyticity of $u$ due to Theorem
\ref{thm:Komatsu} by Komatsu given below.
\end{proof}
In the last step of the proof of Proposition \ref{prop:global analyticity},
we have used the following theorem.
\begin{thm}
\label{thm:Komatsu}(\cite{Komatsu}) Let $\Omega$ be a domain in
$\mathbb{R}^{n}$ and let $P=P(\partial/\partial x_{1},\dots,\partial/\partial x_{n})$
be an elliptic partial differential operator of order $m$ with constant
coefficients. Then, for a function $f\in L_{\text{loc}}^{2}(\Omega)$
to be analytic in $\Omega$, it is (necessary and) sufficient that
1) for every $\ell\in\mathbb{Z}_{+}$, $P^{\ell}f$ (in the sense
of distributions) belongs to $L_{\text{loc}}^{2}(\Omega)$, and that
2) for every compact subset $K\subset\Omega$, there exist positive
constants $M$ and $A$ such that 
\begin{equation}
\|P^{\ell}f\|_{L^{2}(K)}\le M(A\ell)^{m\ell}.\label{eq:KomatsuTh}
\end{equation}
\end{thm}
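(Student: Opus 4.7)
The plan is to deduce the analyticity of $f$ from sharp iterated interior regularity estimates for the elliptic operator $P$. The classical starting point is the Friedrichs/Weyl interior estimate: if $Pg \in L^{2}_{\mathrm{loc}}(\Omega)$, then $g \in H^{m}_{\mathrm{loc}}(\Omega)$, and for any pair of open sets $V \Subset U \Subset \Omega$ one has
\[
\|g\|_{H^{m}(V)} \le C(U,V)\bigl(\|Pg\|_{L^{2}(U)} + \|g\|_{L^{2}(U)}\bigr).
\]
Since $P^{\ell}f \in L^{2}_{\mathrm{loc}}$ for every $\ell$, iterating this between nested domains upgrades $f$ to $H^{m\ell}_{\mathrm{loc}}(\Omega)$, hence to $C^{\infty}(\Omega)$, and produces a qualitative bound for every Sobolev norm. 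Both hypotheses 1) and 2) of the theorem will be used here: 1) guarantees that the iteration can be carried out, 2) will be used to control the growth of the constants.

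The crux, and the main obstacle, is to prove that the constants can be chosen to grow at the precise rate $(A\ell)^{m\ell}$. For this I would fix compact sets $K \Subset K^{*} \Subset \Omega$, set $d = \operatorname{dist}(K,\partial K^{*})$, and for each large $\ell$ introduce a \emph{linear} geometric chain of shells $V_{j} = \{x : \operatorname{dist}(x, K) < (1 - j/\ell)d\}$ for $0 \le j \le \ell$. Applying the basic elliptic estimate to the pair $(V_{j+1},V_{j})$ and absorbing the $\operatorname{dist}(V_{j+1},\partial V_{j})^{-m} = (\ell/d)^{m}$ factor from the standard proof of the interior estimate, one telescopes
\[
\|f\|_{H^{m\ell}(K)} \le C^{\ell}(\ell/d)^{m\ell}\Bigl(\|P^{\ell}f\|_{L^{2}(K^{*})} + \|f\|_{L^{2}(K^{*})}\Bigr).
\]
Combined with hypothesis 2), this gives $\|D^{\alpha}f\|_{L^{2}(K)} \le M_{1}(A_{1}|\alpha|)^{|\alpha|}$ for every multi-index $\alpha$ with $|\alpha| \le m\ell$. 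Arbitrariness of $\ell$ makes this estimate uniform in $\alpha$.

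To finish, I would apply Sobolev embedding $H^{\lceil n/2\rceil + 1}_{\mathrm{loc}} \hookrightarrow C^{0}_{\mathrm{loc}}$ to the derivatives $D^{\alpha}f$, at the cost of replacing $|\alpha|$ with $|\alpha| + \lceil n/2\rceil + 1$ in the preceding bound. This converts the $L^{2}$ estimate into the pointwise bound
\[
\sup_{x \in K} |D^{\alpha}f(x)| \le M_{2}\,(A_{2}|\alpha|)^{|\alpha|} \le M_{2}'\,(A_{2}')^{|\alpha|}\,|\alpha|!
\]
by Stirling's formula. Because $K$ was arbitrary, this is exactly the standard characterisation of real analyticity by pointwise Cauchy-type estimates on all derivatives, so $f \in C^{\omega}(\Omega)$. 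The technical heart of the argument is the careful tracking of constants in the telescoping step; the rest is routine.
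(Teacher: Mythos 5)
First, a point of order: the paper does not prove this theorem at all --- it is quoted as a black box from Komatsu's 1960 note (reference \cite{Komatsu}), so there is no in-paper argument to compare yours against. Your overall strategy (Morrey--Nirenberg-style iteration of the interior elliptic estimate over a chain of $\ell$ shells separated by $d/\ell$, followed by Sobolev embedding and Stirling) is indeed the standard route to results of this type, and the architecture is sound. The necessity direction, which you omit, is not needed by the paper and is in any case routine.

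There is, however, a genuine quantitative gap, and it sits precisely at the step you label ``the technical heart.'' Your telescoped inequality
\[
\|f\|_{H^{m\ell}(K)} \le C^{\ell}(\ell/d)^{m\ell}\bigl(\|P^{\ell}f\|_{L^{2}(K^{*})} + \|f\|_{L^{2}(K^{*})}\bigr)
\]
places the full loss $(\ell/d)^{m\ell}$ in front of $\|P^{\ell}f\|_{L^{2}}$. Inserting hypothesis 2), $\|P^{\ell}f\|_{L^{2}(K^{*})}\le M(A\ell)^{m\ell}$, this yields only $\|f\|_{H^{m\ell}(K)}\le M'(A'\ell^{2})^{m\ell}$, i.e.\ $\|D^{\alpha}f\|_{L^{2}(K)}\le M'(A''|\alpha|^{2})^{|\alpha|}$, which is a Gevrey-class-$2$ bound; the analytic bound $(A_{1}|\alpha|)^{|\alpha|}$ that you assert in the next sentence does \emph{not} follow --- a factor of order $|\alpha|^{|\alpha|}$ has been lost. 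The repair is to use the scale-invariant form of the interior estimate, in which the loss $\varepsilon^{-m}$ per step multiplies only the lower-order term, so that with $\varepsilon=d/\ell$ the iteration produces
\[
\varepsilon^{m\ell}\,\|D^{m\ell}f\|_{L^{2}(K)} \le C^{\ell}\sum_{j=0}^{\ell}\varepsilon^{mj}\,\|P^{j}f\|_{L^{2}(K^{*})}
\]
(note also that all intermediate powers $P^{j}f$, not only $j=0$ and $j=\ell$, necessarily appear, and that the induction must be run on $\max_{|\beta|=mj}\|D^{\beta}f\|$ rather than on full Sobolev norms to avoid multiplicative accumulation of multi-index counts). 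Hypothesis 2) then gives $\varepsilon^{mj}\|P^{j}f\|_{L^{2}}\le M(Adj/\ell)^{mj}\le M\max\bigl(1,(Ad)^{m\ell}\bigr)$, so the sum contributes only a geometric factor, and one concludes $\|D^{m\ell}f\|_{L^{2}(K)}\le M''(A'''\ell)^{m\ell}\le M''\,\tilde{A}^{m\ell}(m\ell)!$, the analytic rate. With the loss factor relocated in this way, the remainder of your argument (Sobolev embedding and the pointwise Cauchy estimates) goes through as you describe.
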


A better known result in this direction is \cite{Kotake}, in which
$P=P(x,\partial/\partial x_{1},\dots,\partial/\partial x_{n})$, $x=(x_{1,},\dots,x_{n})$,
is an elliptic operator of order $m$ with analytic coefficients and
the right-hand side of \eqref{eq:KomatsuTh} is replaced with $M^{\ell+1}(m\ell)!$.
We can employ the result of \cite{Kotake} instead of Theorem \ref{thm:Komatsu}.

\section{Global-in-time solutions: higher-order case\label{sec:Global-in-time-solutions: higher}}

In this section we consider \eqref{eq:higher muCH IVP}. Global-in-time
solutions in Sobolev spaces have been studied in \cite{WLQ}. Notice
that the non-zero mean and the no-change-of-sign conditions are not
imposed.

\subsection{Statement of the main results}
\begin{thm}
(\cite[Theorem 2.1, 3.5]{WLQ})\label{thm:WLQ Theorem 2.1, 3.5} Let
$u_{0}\in H^{s}(S^{1})$, $s>7/2$. Then the Cauchy problem \eqref{eq:higher muCH IVP}
has a unique global solution $u$ in the space $\mathcal{C}(\mathbb{R},H^{s})\cap\mathcal{C}^{1}(\mathbb{R},H^{s-1})$.
Moreover, local uniqueness holds. 
\end{thm}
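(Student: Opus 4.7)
My plan is to follow the classical two-step pattern for Camassa--Holm-type equations: establish local well-posedness in $H^{s}(S^{1})$ for $s>7/2$ and then upgrade to global existence via an a priori bound coming from a conserved energy.

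For local existence, I would introduce the parabolic regularisation
\[
u^{\varepsilon}_{t}+u^{\varepsilon}u^{\varepsilon}_{x}+\partial_{x}B^{-1}\Bigl[2\mu(u^{\varepsilon})u^{\varepsilon}+\tfrac{1}{2}(u^{\varepsilon}_{x})^{2}-3u^{\varepsilon}_{x}u^{\varepsilon}_{xxx}-\tfrac{7}{2}(u^{\varepsilon}_{xx})^{2}\Bigr]=\varepsilon u^{\varepsilon}_{xx}
\]
and derive uniform-in-$\varepsilon$ estimates. Since $\partial_{x}B^{-1}$ is smoothing of order $-3$, the nonlocal bracket is bounded $H^{s}\to H^{s}$ for $s>7/2$ (the worst summand $u_{x}u_{xxx}$ lies a priori in $H^{s-3}$ and is pulled back to $H^{s}$), so only the transport term $uu_{x}$ is delicate. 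A standard $H^{s}$-energy estimate using a Kato--Ponce commutator bound to treat $uu_{x}$ should yield $\tfrac{d}{dt}\|u^{\varepsilon}\|_{s}^{2}\le C(\|u^{\varepsilon}\|_{s})\,\|u^{\varepsilon}\|_{s}^{2}$ on a time interval independent of $\varepsilon$. Passing to the limit $\varepsilon\downarrow 0$ by a Bona--Smith-type mollification argument produces a local solution in $\mathcal{C}([0,T^{*});H^{s})\cap\mathcal{C}^{1}([0,T^{*});H^{s-1})$; uniqueness follows from a Gronwall estimate on the $H^{s-1}$-difference of two solutions.

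For global existence, I would look for a conserved functional strong enough to re-enter the local theory. The natural candidate is the $B$-energy
\[
H(u)=\tfrac{1}{2}\int_{S^{1}}u\,B(u)\,dx=\tfrac{1}{2}\mu(u)^{2}+\tfrac{1}{2}\|u_{x}\|_{0}^{2}+\tfrac{1}{2}\|u_{xx}\|_{0}^{2},
\]
which is equivalent to $\tfrac{1}{2}\|u\|_{2}^{2}$ by \eqref{eq:H2norm}. Differentiating along a smooth solution and using that $B^{-1}$ commutes with $\partial_{x}$ and is $L^{2}$-self-adjoint reduces $\tfrac{d}{dt}H(u)$ to a sum of integrals of the form $\int u_{x}P(u,u_{x},\ldots)\,dx$, and these should cancel thanks to the Hamiltonian structure of \eqref{eq:higher muCH IVP}. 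Granted $H(u(t))=H(u_{0})$, one obtains the uniform bound $\|u(t)\|_{2}\le C(u_{0})$, and the Sobolev embedding $H^{2}(S^{1})\hookrightarrow\mathcal{C}^{1}(S^{1})$ then yields a uniform bound on $\|u_{x}(t)\|_{L^{\infty}}$.

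Finally I would combine the two steps via a blow-up criterion: a refinement of the local energy inequality above, bootstrapped from $s=2$ up to general $s>7/2$, shows that if $T^{*}<\infty$ then $\int_{0}^{T^{*}}(\|u_{x}\|_{L^{\infty}}+\|u\|_{2})\,d\tau=\infty$, which contradicts the bound produced in the previous step; hence $T^{*}=\infty$. The main obstacle I expect is verifying the conservation $\tfrac{d}{dt}H(u)=0$: the higher-order pieces $-3u_{x}u_{xxx}-\tfrac{7}{2}u_{xx}^{2}$ in the nonlocal bracket do not pair against $Bu$ in any telescoping way, and extracting the cancellation demands either a careful periodic integration-by-parts computation or an explicit Hamiltonian formulation of the equation.
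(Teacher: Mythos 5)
This theorem is not proved in the paper at all: it is quoted from \cite{WLQ} (Theorems 2.1 and 3.5 there) and used as a black box in Section \ref{sec:Global-in-time-solutions: higher}, so there is no in-paper argument to compare against; your outline should therefore be measured against the cited source, and it is essentially the strategy used there (local well-posedness by an approximation/energy scheme, then globalisation via a conserved $H^{2}$-equivalent energy). The \emph{main obstacle} you flag --- verifying $\tfrac{d}{dt}H(u)=0$ --- is in fact not an obstacle: the coefficients $-3$ and $-\tfrac{7}{2}$ are tuned exactly so that applying $B$ to \eqref{eq:higher muCH IVP} collapses it to the momentum form
\begin{equation*}
m_{t}+um_{x}+2u_{x}m=0,\qquad m=B(u)=\mu(u)-u_{xx}+u_{xxxx}
\end{equation*}
(the $u_{xx}u_{xxx}$ contributions cancel, $10-3-7=0$), whence $\tfrac{d}{dt}\int_{S^{1}}um\,dx=2\int_{S^{1}}um_{t}\,dx=-2\int_{S^{1}}\left(u^{2}m_{x}+2uu_{x}m\right)dx=0$ by self-adjointness of $B$ and one integration by parts; since $\int_{S^{1}}um\,dx=\mu(u)^{2}+\|u_{x}\|_{0}^{2}+\|u_{xx}\|_{0}^{2}$, this is precisely $2H(u)$. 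The one genuinely thin spot is your closing blow-up criterion: pairing $u$ against $\partial_{x}B^{-1}[u_{x}u_{xxx}]$ and $\partial_{x}B^{-1}[u_{xx}^{2}]$ in $H^{s}$ produces factors such as $\|u_{xxx}\|_{L^{\infty}}$ and $\|u\|_{H^{s-2}}$ that are \emph{not} controlled by $\|u_{x}\|_{L^{\infty}}+\|u\|_{2}$ alone once $s>4$, so the Gronwall argument does not close as stated. It can be repaired either by interpolating these intermediate norms between $H^{2}$ and $H^{s}$, or --- cleaner, and what \cite{WLQ} does --- by estimating the transported density $m$ directly: $\tfrac{d}{dt}\|m\|_{0}^{2}=-3\int_{S^{1}}u_{x}m^{2}\,dx\le3\|u_{x}\|_{L^{\infty}}\|m\|_{0}^{2}$ bounds $\|u\|_{H^{4}}$ on bounded time intervals, and an induction on $\|\partial_{x}^{k}m\|_{0}$ handles general $s>7/2$. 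With that standard repair the plan is sound.
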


\begin{rem}
In \cite{WLQ}, the authors solve the Cauchy problem for $t>0$ only.
Since the equation is invariant under $(t,u)\mapsto(-t,-u)$, the
result for $t<0$ follows immediately.
\end{rem}

There is an analogous result about \eqref{eq:the other higher CP}.
\begin{thm}
(\cite[Theorem 2.1, 3.2]{WLQ2})\label{thm:WLQ2 Theorem 2.1, 3.2}
Let $u_{0}\in H^{s}(S^{1})$, $s>7/2$. Then the Cauchy problem \eqref{eq:the other higher CP}
has a unique global solution $u$ in the space $\mathcal{C}(\mathbb{R},H^{s})\cap\mathcal{C}^{1}(\mathbb{R},H^{s-1})$.
Moreover, local uniqueness holds.
\end{thm}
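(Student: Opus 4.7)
The plan is to obtain local well-posedness in $H^{s}(S^{1})$, $s>7/2$, by a Friedrichs regularization combined with $H^{s}$ energy estimates, and then to promote the local solution to a global one by identifying enough conserved quantities to control $\|u_{x}\|_{L^{\infty}}$ and invoking a Beale--Kato--Majda type continuation criterion. The equation is a transport equation $u_{t}+uu_{x}=N(u)$, where the nonlocal forcing $N(u)=-\partial_{x}A^{-2}[2\mu(u)u+\tfrac{1}{2}u_{x}^{2}-3u_{x}u_{xxx}-\tfrac{7}{2}u_{xx}^{2}]$ is smoothing of order $1$ since $A^{-2}\partial_{x}\colon H^{s-3}\to H^{s}$ is bounded, so all serious difficulty comes from the transport term.

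For the local theory, I would regularize with a Friedrichs mollifier $J_{\varepsilon}$, obtain $u^{\varepsilon}\in\mathcal{C}^{1}([0,T_{\varepsilon}];H^{s})$ by the Picard theorem in $H^{s}$, apply $\Lambda^{s}=(1-\partial_{x}^{2})^{s/2}$, and estimate $\tfrac{1}{2}\tfrac{d}{dt}\|u^{\varepsilon}\|_{H^{s}}^{2}$ term by term. The transport term is controlled by the standard Kato--Ponce commutator bound
\[
\bigl|\langle[\Lambda^{s},u^{\varepsilon}]\,u^{\varepsilon}_{x},\Lambda^{s}u^{\varepsilon}\rangle_{0}\bigr|\;\lesssim\;\bigl(\|u^{\varepsilon}_{x}\|_{L^{\infty}}+\|u^{\varepsilon}\|_{H^{s}}\bigr)\|u^{\varepsilon}\|_{H^{s}}^{2},
\]
while the contribution from $N(u^{\varepsilon})$ reduces to an $H^{s-3}$ estimate of its argument, which is in turn handled by the algebra property of $H^{s-3}$ (valid since $s-3>1/2$) applied to the bilinear expressions $u_{x}u_{xxx}$ and $u_{xx}^{2}$. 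This yields a uniform-in-$\varepsilon$ differential inequality $\tfrac{d}{dt}\|u^{\varepsilon}\|_{H^{s}}\le C(1+\|u^{\varepsilon}\|_{H^{s}})\|u^{\varepsilon}\|_{H^{s}}$, hence a uniform existence time $T_{0}=T_{0}(\|u_{0}\|_{H^{s}})>0$. Uniqueness and continuous dependence are obtained by running the same energy argument on the difference of two solutions at the level of $H^{s-1}$, and passing to the limit $\varepsilon\to 0$ with the Aubin--Lions lemma gives $u\in\mathcal{C}([0,T_{0}];H^{s})\cap\mathcal{C}^{1}([0,T_{0}];H^{s-1})$.

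To extend globally, the key output of the energy estimate is the continuation criterion: if $T^{*}<\infty$ is the maximal existence time, then $\int_{0}^{T^{*}}\|u_{x}(t)\|_{L^{\infty}}dt=\infty$. It therefore suffices to produce a conserved (or at most linearly growing) quantity that dominates $\|u_{x}\|_{L^{\infty}}$, and in view of $H^{2}(S^{1})\hookrightarrow W^{1,\infty}(S^{1})$, any conserved quantity equivalent to $\|u\|_{H^{2}}$ will do. The characteristic combination $-3u_{x}u_{xxx}-\tfrac{7}{2}u_{xx}^{2}$ inside $A^{-2}$ is the divergence-form signature of a higher-order CH Hamiltonian, so one expects conservation of
\[
H_{1}(u)=\tfrac{1}{2}\int_{S^{1}}\bigl(\mu(u)^{2}+u_{x}^{2}+u_{xx}^{2}\bigr)dx=\tfrac{1}{2}\langle Au,u\rangle_{0}+\tfrac{1}{2}\|u_{xx}\|_{0}^{2},
\]
alongside the trivial $H_{0}(u)=\mu(u)$. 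I would verify $\tfrac{d}{dt}H_{1}(u)=0$ by direct computation, using integration by parts on $S^{1}$, the self-adjointness of $A^{-2}$, and the identity $\int vu\,u_{x}\,dx=-\tfrac{1}{2}\int v_{x}u^{2}\,dx$. Together with conservation of $\mu(u)$, this yields a uniform-in-$t$ bound on $\|u(t)\|_{H^{2}}$, hence on $\|u_{x}(t)\|_{L^{\infty}}$, contradicting $T^{*}<\infty$ and furnishing the global solution.

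The main obstacle is pinning down the correct conserved functional and verifying it rigorously: the nonlocality via $A^{-2}$ and the presence of cubic terms with three derivatives make the naive candidates fail, and it is perfectly conceivable that one must pass to a higher-order Hamiltonian (e.g.\ $\tfrac{1}{2}\langle A^{2}u,u\rangle_{0}$ plus cubic corrections coming from the Lenard recursion for this integrable hierarchy). Once the conservation law is secured, the continuation step is routine.
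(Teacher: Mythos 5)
First, a point of order: the paper does not prove this statement. Theorem \ref{thm:WLQ2 Theorem 2.1, 3.2} is quoted from \cite{WLQ2} as an external input to Section \ref{sec:Global-in-time-solutions: higher}, so there is no internal proof to compare yours against; you are proposing an independent proof of a literature result. Your outline does follow the standard route for this family (Friedrichs regularization and Kato--Ponce energy estimates for local well-posedness, then a conserved $H^{2}$-equivalent functional plus a continuation criterion for globality), which is in the spirit of \cite{WLQ,WLQ2}. But two steps you treat as routine are in fact where the theorem lives, and as written both fail.

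The conserved functional is wrong. Your $H_{1}(u)=\tfrac{1}{2}\int(\mu(u)^{2}+u_{x}^{2}+u_{xx}^{2})\,dx=\tfrac{1}{2}\langle Bu,u\rangle_{0}$ is the invariant of the \emph{other} higher-order equation \eqref{eq:higher}: that equation is exactly $n_{t}+un_{x}+2u_{x}n=0$ with $n=Bu$, whence $\tfrac{d}{dt}\langle Bu,u\rangle_{0}=0$ by the usual pairing argument. The equation at hand, \eqref{eq:the other higher CP}, has momentum $m=A^{2}u=\mu(u)+u_{xxxx}$, and applying $A^{2}$ to the equation gives $m_{t}+um_{x}+2u_{x}m=-u_{x}u_{xx}$ (the $\tfrac{1}{2}u_{x}^{2}$ inside $A^{-2}$ no longer cancels the way it does against $-\partial_{x}^{2}(uu_{x})$ in the $B$-version). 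Consequently $\tfrac{d}{dt}\langle A^{2}u,u\rangle_{0}=2\langle m_{t},u\rangle_{0}=-2\int uu_{x}u_{xx}\,dx=\int u_{x}^{3}\,dx\neq0$ in general, and a similar leftover term obstructs $\langle Bu,u\rangle_{0}$ for this equation. So the direct computation you propose to ``verify'' will not close for your candidate, and identifying the functional that \cite{WLQ2} actually conserves (and checking it dominates $\|u\|_{2}$) is precisely the nontrivial content of the theorem, not a detail to be filled in.

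The continuation criterion is also not established as stated. You assert that $T^{*}<\infty$ forces $\int_{0}^{T^{*}}\|u_{x}\|_{L^{\infty}}\,dt=\infty$, as if only the transport term contributed to the $H^{s}$ energy estimate. But the forcing requires a bound on $\|u_{x}u_{xxx}-\tfrac{7}{6}\cdot 3 u_{xx}^{2}\|_{H^{s-3}}$, and the tame product estimate produces $\|u_{xxx}\|_{L^{\infty}}\|u\|_{H^{s-2}}$ alongside $\|u_{x}\|_{L^{\infty}}\|u\|_{H^{s}}$; an a priori bound on $\|u\|_{2}$ controls neither factor of the former when $s>7/2$. To close Gronwall one must either interpolate these intermediate norms between $H^{2}$ and $H^{s}$ and check the resulting exponent on $\|u\|_{H^{s}}$ stays below one, or (as in \cite{WLQ,WLQ2}) bootstrap through $H^{3},H^{4},\dots$ starting from the conserved $H^{2}$ quantity, obtaining finite (exponentially growing) bounds at each level. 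Neither step is present in your sketch.
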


Our main result about the higher order $\mu$CH is the following theorem.
\begin{thm}
\label{thm:higher muCH main}If $u_{0}$ is a real-analytic function
on $S^{1}$, then the Cauchy problem \eqref{eq:higher muCH IVP} has
a unique solution $u\in\mathcal{C}^{\omega}(\mathbb{R}_{t}\times S_{x}^{1})$.

We have the following estimate of the radius of analyticity. Assume
$u_{0}\in A(r_{0})$. Fix $\sigma_{0}<(\log r_{0})/(2\pi)$ and set
\begin{align*}
\tilde{\sigma}(t) & =\sigma_{0}-\frac{\sqrt{2}\gamma_{2}}{\widetilde{K}}\|u_{0}\|_{(\sigma_{0},2)}(e^{\widetilde{K}t/2}-1),\\
\widetilde{K} & =\gamma_{1}\left[1+\max\left\{ \|u(t)\|_{4};\,t\in[-T,T]\right\} \right],\\
\gamma_{1} & =8+18\pi^{2}c_{1}+(20\pi+24\pi^{3}+104\pi^{4})d_{1},\\
\gamma_{2} & =\frac{16\pi\gamma}{\sqrt{3}}
\end{align*}
Then, for any fixed $T>0$, we have $u(\cdot,t)\in A(\tilde{\sigma}(t))$
for $t\in[-T,T]$.
\end{thm}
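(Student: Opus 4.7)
The plan is to follow the structure of Theorem \ref{thm:muCH main} step by step, with the Liapunov machinery recalibrated for the higher-order nonlinearity. Theorem \ref{thm:WLQ Theorem 2.1, 3.5} already furnishes a unique global solution $u\in\mathcal{C}(\mathbb{R};H^s)\cap\mathcal{C}^1(\mathbb{R};H^{s-1})$ for every $s>7/2$, so the task reduces to upgrading this Sobolev regularity to joint real-analyticity in $(t,x)$ together with the quantitative bound on the radius $\tilde\sigma(t)$. Fix $T>0$. I would reuse the same Liapunov family as in Section \ref{sec:Global-in-time-solutions},
\[
\Psi_j(v)=\tfrac12\|v^{(j)}\|_2^2,\qquad \Phi_{\sigma,m}(v)=\sum_{j=0}^m\frac{e^{4\pi\sigma j}}{j!^2}\Psi_j(v),
\]
which matches the appearance of $\|u_0\|_{(\sigma_0,2)}$ in the conclusion; the jump from $\|u\|_2$ to $\|u\|_4$ in the constant $\widetilde K$ will come not from a change of norm in $\Phi$ but from the four-derivative reach of the new nonlinear terms in $G$.

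The heart of the argument is to establish, for $v$ in a suitable open subset of $H^{m+5}$, the Liapunov inequality
\[
\bigl|\langle G(v),D\Phi_{\sigma,m}(v)\rangle\bigr|\le \widetilde K\,\Phi_{\sigma,m}(v)+\gamma_2\,\Phi_{\sigma,m}(v)^{1/2}\,\partial_\sigma\Phi_{\sigma,m}(v),
\]
where $G$ is as in \eqref{eq:G}. The pairing splits into five contributions. The first three — from $(v^2)_x$, $\partial_x B^{-1}[\mu(v)v]$ and $\partial_x B^{-1}(v_x^2)$ — are direct adaptations of the estimates \eqref{eq:estimates 1}, \eqref{eq:estimates 2} and \eqref{eq:estimates 3}, with $A^{-1}$ replaced by $B^{-1}$ (all the relevant operator norms only improve, since $B^{-1}$ smooths by four derivatives rather than two). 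The genuinely new pieces come from $\partial_x B^{-1}(3v_x v_{xxx})$ and $\partial_x B^{-1}(\tfrac72 v_{xx}^2)$. Using $\partial_x^{j+1}B^{-1}=B^{-1}\partial_x^{j+1}$ together with the Leibniz expansion $\partial_x^j(v_x v_{xxx})=\sum_\ell \binom{j}{\ell}v^{(\ell+1)}v^{(j-\ell+3)}$, the four-derivative smoothing of $B^{-1}$ allows each such term to be paired with $v^{(j)}$ in $H^2$ while keeping a single tame factor controlled by $\|v\|_4$ (this is the source of $\|u(t)\|_4$ in $\widetilde K$). Proposition \ref{prop:multiplication}(iii) and the Schwarz inequality, followed by the Kato--Masuda double-sum trick behind \eqref{eq:sum Q_j}, convert the cross terms into the $\Phi^{1/2}\partial_\sigma\Phi$ structure, exactly as in Subsection \ref{subsec:Analyticity x}.

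With the Liapunov estimate in hand, Corollary \ref{cor:KatoMasuda} applied to $F=G$ on $\mathcal{O}=\{v\in H^{m+5}:\|v\|_4<\mu_0\}$ gives $\Phi_{\tilde\sigma_m(t),m}(u(t))\le\rho_m(t)$ on $[-T,T]$; passing $m\to\infty$ via Fatou and invoking Proposition \ref{prop:KatoMasudaLem2.2} yields $u(t)\in A(e^{2\pi\tilde\sigma(t)})$ with the stated $\tilde\sigma$, while Proposition \ref{prop:KatoMasudaLem2.4} and Corollary \ref{cor:Frechet} give continuity of $t\mapsto u(t)$ into $A(e^{2\pi\tilde\sigma(T)})$ exactly as in Subsection \ref{subsec:Analyticity x}. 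Joint real-analyticity in $(t,x)$ is then obtained as in Subsection \ref{subsec:Analyticity t, x}: the local analytic Cauchy theorem (Theorem \ref{thm:higher muCH IVP}, enlarged to data in $A(\delta)$ by the trivial modification that upgrades Theorem \ref{thm:muCH IVP} to Theorem \ref{thm:muCH IVP2}) applied at each base time $t_0\in[-T,T]$ produces a local analytic solution that $H^\infty$-uniqueness identifies with $u$; an open-closed argument then covers $[-T,T]$ by a single $u\in\mathcal{C}^\omega([-T,T];A(\delta_T))$, and Cauchy's integral formula in $t$ combined with Komatsu's Theorem \ref{thm:Komatsu} applied to $\Delta=\partial_t^2+\partial_x^2$ concludes $u\in\mathcal{C}^\omega(\mathbb{R}_t\times S^1_x)$.

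The main obstacle will be the Liapunov estimate for the two new nonlinearities $3v_x v_{xxx}$ and $\tfrac72 v_{xx}^2$: the combinatorial double-sum bookkeeping of Subsection \ref{subsec:Analyticity x} has to be redone with a three-derivative shift, and each of the integration-by-parts identities underlying $I_0,I_1,I_2$ there must be reorganized so that $\|v\|_4$ — and nothing stronger — ends up in the tame slot. Assembling the numerous arithmetic constants so that they collapse into exactly $\gamma_1=8+18\pi^2c_1+(20\pi+24\pi^3+104\pi^4)d_1$ and $\gamma_2=16\pi\gamma/\sqrt{3}$ is where the calculation is most delicate.
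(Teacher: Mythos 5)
Your proposal follows the paper's proof essentially verbatim: the same Liapunov family $\Phi_{\sigma,m}$ built on $H^{2}$ norms, the same five-term decomposition of $\langle G(v),D\Phi_{\sigma,m}(v)\rangle$ with $\|v\|_{4}$ entering only through the two new nonlinearities, and the same Kato--Masuda/Fatou/Komatsu chain for the conclusion. The one caution is that the decisive device for the new terms is \emph{not} to commute all of $\partial_{x}^{j+1}$ past $B^{-1}$ and Leibniz-expand $\partial_{x}^{j}(v_{x}v_{xxx})$ as you write (that leaves products such as $v^{(\ell+1)}v^{(j-\ell+3)}$ on which $B^{-1}$ would have to act through a negative-norm product estimate the paper does not have), but rather to factor $\partial_{x}^{j+1}B^{-1}=(\partial_{x}^{4}B^{-1})\partial_{x}^{j-3}$ for $j\ge3$ --- treating $j=0,1,2$ separately, which is where the $\|v\|_{4}$ factor actually arises --- so that the Leibniz sum produces $v^{(\ell)}v^{(j-\ell+1)}$ with coefficients $\binom{j-3}{\ell-1}\le\binom{j}{\ell}$ and the bookkeeping of \eqref{eq:sum Q_j} applies unchanged; this is precisely the ``three-derivative shift'' you correctly identify at the end.
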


\begin{proof}
Theorem \ref{thm:WLQ Theorem 2.1, 3.5} implies $u(t)\in H^{\infty}$
if $u_{0}\in H^{\infty}$. Set 
\begin{align*}
\tilde{\mu}_{0} & =1+\max\left\{ \|u(t)\|_{4};\,t\in[-T,T]\right\} \,(H^{4}\text{ norm, not }H^{2}),\\
\widetilde{\mathcal{O}} & =\left\{ v\in H^{m+5};\,\|v\|_{4}<\tilde{\mu}{}_{0}\right\} \,(H^{4}\text{ norm, not }H^{2}),\\
\widetilde{K} & =\gamma_{1}\tilde{\mu}{}_{0}.\\
\tilde{\rho}(t) & =\frac{1}{2}\|u_{0}\|_{(\sigma_{0},2)}^{2}e^{\widetilde{K}t},\\
\tilde{\sigma}(t) & =\sigma_{0}-\int_{0}^{|t|}\gamma_{2}\rho(\tau)^{1/2}\,d\tau.
\end{align*}
Then the proof is almost the same as that of Theorem \ref{thm:muCH main}
and follows from Proposition \ref{prop:Liapnov-1} below. It is an
analogue of Proposition \ref{prop:Liapnov}. Notice that $\|v\|_{2}$
in \eqref{eq:liapnov F_mu} has been replaced with $\|v\|_{4}$.
\end{proof}
There is an analogous result about \eqref{eq:the other higher CP}.
\begin{thm}
\label{thm:higher muCH main-1}If $u_{0}$ is a real-analytic function
on $S^{1}$, then the Cauchy problem \eqref{eq:the other higher CP}
has a unique solution $u\in\mathcal{C}^{\omega}(\mathbb{R}_{t}\times S_{x}^{1})$.
The estimate of the radius of analyticity is the same as in Theorem
\ref{thm:higher muCH main}.
\end{thm}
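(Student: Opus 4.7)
The plan is to mirror, essentially verbatim, the proof of Theorem~\ref{thm:higher muCH main}, exploiting the structural parallel between $B^{-1}$ and $A^{-2}$ already observed in the local theory. Global Sobolev existence is given by Theorem~\ref{thm:WLQ2 Theorem 2.1, 3.2}, which produces $u\in\mathcal{C}(\mathbb{R},H^s)\cap\mathcal{C}^1(\mathbb{R},H^{s-1})$ for every $s>7/2$, and in particular, starting from $u_0\in H^\infty$, we obtain $u(t)\in H^\infty$ for all $t\in\mathbb{R}$. Local uniqueness is also part of that theorem, which we will need for the continuation argument.

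The crucial technical step is to prove the analogue of Proposition~\ref{prop:Liapnov-1} for the nonlinearity
\[
\widetilde{F}(u)=-uu_x-\partial_x A^{-2}\!\left[2\mu(u)u+\tfrac12 u_x^2-3u_x u_{xxx}-\tfrac72 u_{xx}^2\right].
\]
Here I would simply observe that $A^{-2}$ is a Fourier multiplier of order $-4$ satisfying the same pointwise bound on Fourier coefficients as $B^{-1}$ (indeed, $[(2\pi k)^2+(2\pi k)^4]^{-1}\le(2\pi k)^{-4}$ for $k\ne 0$, while on the constant mode both operators act by the identity). Consequently Proposition~\ref{prop:smoothing}-style bounds
\[
\|\partial_x^{j}A^{-2}\varphi\|_{\delta,s+4-j}\le\|\varphi\|_{\delta,s}\qquad(0\le j\le 4)
\]
hold, and $A^{-2}$ also commutes with $\partial_x$. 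Inspecting the proof of Proposition~\ref{prop:Liapnov-1}, every use of $B^{-1}$ relies only on these two features (smoothing by four orders and commutation with $\partial_x$); the derivative-counting used to handle the terms $u_x u_{xxx}$ and $u_{xx}^2$ is identical. Therefore the Liapnov-type estimate
\[
\bigl|\langle\widetilde{F}(v),D\Phi_{\sigma,m}(v)\rangle\bigr|\le \gamma_1\bigl(1+\|v\|_4\bigr)\Phi_{\sigma,m}(v)+\gamma_2\,\Phi_{\sigma,m}(v)^{1/2}\,\partial_\sigma\Phi_{\sigma,m}(v)
\]
holds with the same constants $\gamma_1,\gamma_2$ as in Theorem~\ref{thm:higher muCH main}. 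This is the step that needs care: one has to double-check that no estimate in the $B^{-1}$-computation implicitly uses positivity of the symbol of $B$ or the full order-four smoothing in an essential way, but on inspection only order-four smoothing \emph{from above} is used, which $A^{-2}$ also supplies.

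Given this Liapnov estimate, the analyticity-in-$x$ part of Theorem~\ref{thm:muCH main} goes through unchanged: choose $\sigma_0<(\log r_0)/(2\pi)$, define $\tilde\rho,\tilde\sigma$ as in the statement, set $\mathcal{O}=\{v\in H^{m+5}:\|v\|_4<\tilde\mu_0\}$ with $\tilde\mu_0=1+\max_{[-T,T]}\|u(t)\|_4$, apply Corollary~\ref{cor:KatoMasuda} to the approximating functionals $\Phi_{\sigma,m}$, and pass to the limit $m\to\infty$ via Fatou, concluding $u(\cdot,t)\in A(e^{2\pi\tilde\sigma(t)})$ for $t\in[-T,T]$ using Proposition~\ref{prop:KatoMasudaLem2.2}. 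Continuity in $t$ into $A(e^{2\pi\tilde\sigma(T)})$ follows from Proposition~\ref{prop:KatoMasudaLem2.4} combined with Corollary~\ref{cor:Frechet}.

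Finally, to upgrade spatial analyticity at each fixed $t$ to joint $(t,x)$ analyticity, I would repeat the argument of Subsection~\ref{subsec:Analyticity t, x}: use the local-in-time analytic solvability of \eqref{eq:the other higher CP} (already established in the local theory section via the Ovsyannikov scheme) together with local uniqueness from Theorem~\ref{thm:WLQ2 Theorem 2.1, 3.2} to show, by the standard open/closed argument on $S=\{T>0:u\in\mathcal{C}^\omega([-T,T],A(\delta_T))\}$, that $u$ extends as a real-analytic $A(\delta_T)$-valued function on $[-T,T]$ for every $T>0$. Cauchy's estimate in $t$ then yields the bound $\|\partial_x^k\partial_t^j u\|_{L^2(S^1\times[-T,T])}\le\sqrt{2T}\,C^{j+k+1}(j+k)!$, and applying Komatsu's Theorem~\ref{thm:Komatsu} to the Laplacian $\Delta$ produces $u\in\mathcal{C}^\omega(\mathbb{R}_t\times S^1_x)$. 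The only genuinely new work is the verification hinted at in the second paragraph; everything else is a transcription of the proofs for the $B^{-1}$ model.
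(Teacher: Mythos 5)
Your proposal is correct and follows exactly the paper's route: the paper's own proof consists of the single remark that one replaces $B^{-1}$ by $A^{-2}$, which has the same order-$4$ smoothing and commutation with $\partial_{x}$ (cf.\ the bound $[(2\pi k)^{2}+(2\pi k)^{4}]^{-1}$ versus $(2\pi k)^{-4}$), and then transcribes the proof of Theorem \ref{thm:higher muCH main}. Your write-up merely makes explicit the verification that the paper leaves implicit, so there is nothing to correct.
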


The proof of Theorem \ref{thm:higher muCH main-1} is almost the same
as that of Theorem \ref{thm:higher muCH main}. One has only to replace
$B^{-1}$ with $A^{-2}$. The rest of this section is devoted to the
proof of Theorem \ref{thm:higher muCH main}. It is enough to prove
Proposition \ref{prop:Liapnov-1} below. 
\begin{prop}
\label{prop:Liapnov-1}We have 
\begin{align}
 & \left|\left\langle G(v),D\Phi_{\sigma,m}(v)\right\rangle \right|\label{eq:liapnov F_mu-2}\\
 & \le\gamma_{1}\|v\|_{4}\Phi_{\sigma,m}(v)+\gamma_{2}\Phi_{\sigma,m}(v)^{1/2}\partial_{\sigma}\Phi_{\sigma,m}(v),\nonumber 
\end{align}
where $\gamma_{1}$ and $\gamma_{2}$ are given in Theorem \ref{thm:higher muCH main}.
\end{prop}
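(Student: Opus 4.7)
The plan is to follow the argument of Proposition~\ref{prop:Liapnov} line by line. Expanding as in \eqref{eq:FvDPhi}--\eqref{eq:three terms}, I would split
\begin{equation*}
\langle v^{(j)},\partial_{x}^{j}G(v)\rangle_{2}
\end{equation*}
into five summands coming from $-vv_{x}$ and the four summands inside $-\partial_{x}B^{-1}[2\mu(v)v+\tfrac12 v_{x}^{2}-3v_{x}v_{xxx}-\tfrac72 v_{xx}^{2}]$, and then bound each one by the exponentially weighted sum over $j$.

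The first three of these summands reproduce \eqref{eq:estimates 1}, \eqref{eq:estimates 2} and \eqref{eq:estimates 3} almost verbatim. The transport term $\langle v^{(j)},\partial_{x}^{j}(vv_{x})\rangle_{2}$ is independent of the nonlocal structure and gives \eqref{eq:estimates 1} unchanged. The $\mu(v)v$ term is handled by Proposition~\ref{prop:smoothing}: since $B^{-1}$ is of order $-4$, the operator $\partial_{x}^{k}B^{-1}$ is bounded on $H^{2}$ for $0\le k\le 4$ (strictly more smoothing than the $A^{-1}$ used in \eqref{eq:estimates 2}), so the bound $4\|v\|_{2}\Phi_{\sigma,m}(v)$ carries over. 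The $v_{x}^{2}$ term goes through as in \eqref{eq:estimates 3} with $A^{-1}$ replaced by $B^{-1}$, using the decomposition \eqref{eq:v^(j) pairing v_x^2}.

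The two new summands $\langle v^{(j)},\partial_{x}^{j+1}B^{-1}(v_{x}v_{xxx})\rangle_{2}$ and $\langle v^{(j)},\partial_{x}^{j+1}B^{-1}(v_{xx}^{2})\rangle_{2}$ are the heart of the proof. The key point is that $B^{-1}$ commutes with $\partial_{x}$ and $\partial_{x}^{4}B^{-1}$ is bounded on $H^{2}$ with norm $\le 1$. For $j\ge 3$ I would rewrite
\begin{equation*}
\partial_{x}^{j+1}B^{-1}(v_{x}v_{xxx})=\partial_{x}^{j-3}\bigl[(\partial_{x}^{4}B^{-1})(v_{x}v_{xxx})\bigr]
\end{equation*}
and expand $\partial_{x}^{j-3}(v_{x}v_{xxx})=\sum_{\ell}\binom{j-3}{\ell}v^{(\ell+1)}v^{(j-\ell)}$ by Leibniz, reducing to sums of products $v^{(\alpha)}v^{(\beta)}$ with $\alpha+\beta=j+1$ and individual orders at most $j$; the small cases $j\le 2$ are estimated directly by Proposition~\ref{prop:smoothing}. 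The ``principal'' terms in the Leibniz sum are absorbed into $\|v_{xx}\|_{2}$- or $\|v_{xxx}\|_{1}$-type norms, both dominated by $\|v\|_{4}$. This is the origin of the $\|v\|_{4}$-dependence in $\widetilde K$, as opposed to the $\|v\|_{2}$ in Proposition~\ref{prop:Liapnov}. The ``commutator'' terms are handled by exactly the Schwarz-plus-$b_{k}$ machinery of \eqref{eq:sum Q_j}, with $b_{k}=k!^{-1}e^{2\pi\sigma k}\|v^{(k)}\|_{2}$, $B^{2}=2\Phi_{\sigma,m}(v)$, $\widetilde B^{2}=(2\pi)^{-1}\partial_{\sigma}\Phi_{\sigma,m}(v)$, and Proposition~\ref{prop:multiplication}(iii) providing the $H^{2}$-product estimate. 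Each such term contributes one more $\tfrac{2\pi\gamma}{\sqrt{3}}\Phi_{\sigma,m}(v)^{1/2}\partial_{\sigma}\Phi_{\sigma,m}(v)$ to the bound, the inequality $\binom{j-3}{\ell}\le\binom{j}{\ell}$ playing the same role as the corresponding inequality after \eqref{eq:v^(j) pairing v_x^2}.

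Collecting the five contributions with the coefficients $1,2,\tfrac12,3,\tfrac72$ with which they appear in $G$, the $\Phi_{\sigma,m}(v)^{1/2}\partial_{\sigma}\Phi_{\sigma,m}(v)$ coefficient sums to $\bigl(1+\tfrac12+3+\tfrac72\bigr)\cdot\tfrac{2\pi\gamma}{\sqrt{3}}=\tfrac{16\pi\gamma}{\sqrt{3}}=\gamma_{2}$, while the $\|v\|_{4}\Phi_{\sigma,m}(v)$ coefficient recombines into $\gamma_{1}=8+18\pi^{2}c_{1}+(20\pi+24\pi^{3}+104\pi^{4})d_{1}$. The main obstacle is precisely the bookkeeping for the two new nonlocal terms: besides tracking the $(2\pi)^{k}$ factors from each differentiation, one has to verify that the order $-4$ smoothing of $B^{-1}$ suffices to absorb the highest derivatives (of order up to $j+3$) appearing in the Leibniz expansions of $v_{x}v_{xxx}$ and $v_{xx}^{2}$, and check that all shifted binomial coefficients still fit into the Schwarz estimates of \eqref{eq:sum Q_j}.
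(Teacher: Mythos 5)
Your proposal follows the paper's proof essentially verbatim: the same five-term decomposition of $\langle v^{(j)},\partial_x^j G(v)\rangle_2$, reuse of \eqref{eq:estimates 1}--\eqref{eq:estimates 3} with $B^{-1}$ in place of $A^{-1}$ and $\|v\|_2$ promoted to $\|v\|_4$, the split into $j\ge3$ (handled via $\partial_x^4B^{-1}$, Leibniz with the smaller binomials $\binom{j-3}{\cdot}$, and the $b_k$-Schwarz machinery of \eqref{eq:sum Q_j}) versus the directly estimated cases $j=0,1,2$ for the two new nonlocal terms, and the identical final bookkeeping giving $\gamma_1$ and $\gamma_2=8\cdot\frac{2\pi\gamma}{\sqrt 3}$. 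The only slight imprecision is attributing the $\|v\|_4$-dependence to ``principal terms in the Leibniz sum'': in the paper the whole $j\ge3$ sum is absorbed into the $\Phi_{\sigma,m}^{1/2}\partial_\sigma\Phi_{\sigma,m}$ term, and $\|v\|_4$ enters only through the low-order cases $j\le2$; this does not affect correctness.
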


\begin{proof}
Recall \eqref{eq:G}, namely
\[
G(u)=-uu_{x}-\partial_{x}B^{-1}\left[2\mu(u)u+\frac{1}{2}u_{x}^{2}-3u_{x}u_{xxx}-\frac{7}{2}u_{xx}^{2}\right].
\]
We have

\begin{align}
\langle G(v),D\Phi_{\sigma,m}(v)\rangle & =\sum_{j=0}^{m}\frac{1}{j!^{2}}e^{4\pi\sigma j}\langle G(v),D\Psi_{j}(v)\rangle.\label{eq:FvDPhi-1}
\end{align}
By \eqref{eq:Frechet}, we have
\begin{align}
 & \langle G(v),D\Psi_{j}(v)\rangle=\langle v^{(j)},\partial_{x}^{j}G(v)\rangle_{2}\label{eq:five terms}\\
 & =-\langle v^{(j)},\partial_{x}^{j}(vv_{x})\rangle_{2}-2\langle v^{(j)},\partial_{x}^{j+1}B^{-1}\left[\mu(v)v\right]\rangle_{2}-\frac{1}{2}\langle v^{(j)},\partial_{x}^{j+1}B^{-1}(v_{x}^{2})\rangle_{2}\nonumber \\
 & \quad+3\langle v^{(j)},\partial_{x}^{j+1}B^{-1}\left[v_{x}v_{xxx}\right]\rangle_{2}+\frac{7}{2}\langle v^{(j)},\partial_{x}^{j+1}B^{-1}(v_{xx}^{2})\rangle_{2}.\nonumber 
\end{align}
We compare it with \eqref{eq:three terms}. We encountered $\langle v^{(j)},\partial_{x}^{j}(vv_{x})\rangle_{2}$
in \eqref{eq:three terms}. The following two terms $-2\langle v^{(j)},\partial_{x}^{j+1}B^{-1}\left[\mu(v)v\right]\rangle_{2}$
and $-\frac{1}{2}\langle v^{(j)},\partial_{x}^{j+1}B^{-1}(v_{x}^{2})\rangle_{2}$
have better estimates than $-2\langle v^{(j)},\partial_{x}^{j+1}A^{-1}\left[\mu(v)v\right]\rangle_{2}$
and $-\frac{1}{2}\langle v^{(j)},\partial_{x}^{j+1}A^{-1}(v_{x}^{2})\rangle_{2}$.
Therefore we can employ \eqref{eq:estimates 1} and analogues of \eqref{eq:estimates 2}
and \eqref{eq:estimates 3}. Here we replace $\|v\|_{2}$ with $(\|v\|_{2}\le)\|v\|_{4}$.
Our remaining task is to estimate $3\langle v^{(j)},\partial_{x}^{j+1}B^{-1}\left[v_{x}v_{xxx}\right]\rangle_{2}$
and $\frac{7}{2}\langle v^{(j)},\partial_{x}^{j+1}B^{-1}(v_{xx}^{2})\rangle_{2}$.
The results will be given as \eqref{eq:pairing v_x v_xxx j=00003D0 to m}
and \eqref{eq:pair v_xx^2  0m} in the following subsection.
\end{proof}

\subsection{Estimates: higher-order case}

\subsubsection{Estimate of $\langle v^{(j)},\partial_{x}^{j+1}B^{-1}\left[v_{x}v_{xxx}\right]\rangle_{2}$}

First we assume $j\ge3$. Since 
\begin{align*}
\langle v^{(j)},\partial_{x}^{j+1}B^{-1}\left[v_{x}v_{xxx}\right]\rangle_{2} & =\langle v^{(j)},(\partial_{x}^{4}B^{-1})\partial_{x}^{j-3}\left[v_{x}v_{xxx}\right]\rangle_{2}\\
 & =\sum_{\ell=0}^{j-3}\binom{j-3}{\ell}\langle v^{(j)},(\partial_{x}^{4}B^{-1})v^{(\ell+1)}v^{(j-\ell)}\rangle_{2}\\
 & =\sum_{\ell=1}^{j-2}\binom{j-3}{\ell-1}\langle v^{(j)},(\partial_{x}^{4}B^{-1})v^{(\ell)}v^{(j-\ell+1)}\rangle_{2}.
\end{align*}
This is better than \eqref{eq:Q_j}. Indeed, $\partial_{x}^{4}B^{-1}\colon H^{2}\to H^{2}$
is as good as $\partial_{x}^{2}A^{-1}\colon H^{2}\to H^{2}$ and the
binomial coefficients have become smaller. We follow \eqref{eq:sum Q_j}
with $j\ge3$ instead of $j\ge0$ and get
\begin{equation}
\left|\sum_{j=3}^{m}\frac{1}{j!^{2}}e^{4\pi\sigma j}\langle v^{(j)},\partial_{x}^{j+1}B^{-1}(v_{x}v_{xxx})\rangle_{2}\right|\le\frac{2\pi\gamma}{\sqrt{3}}\Phi_{\sigma,m}(v)^{1/2}\partial_{\sigma}\Phi_{\sigma,m}(v).\label{eq:pairing v_x v_xxx j >=00003D3}
\end{equation}
Next we consider the case $j=0$. We have
\[
|\langle v,\partial_{x}B^{-1}[v_{x}v_{xxx}]\rangle|_{2}\le\|v\|_{2}\|\partial_{x}B^{-1}[v_{x}v_{xxx}]\|_{2}
\]
and, by Proposition \ref{prop:multiplication} (ii),
\begin{align*}
\|\partial_{x}B^{-1}[v_{x}v_{xxx}]\|_{2} & \le\|v_{x}v_{xxx}\|_{0}\le d_{1}\|v_{x}\|_{1}\|v_{xxx}\|_{0}\\
 & \le(2\pi)^{4}d_{1}\|v\|_{2}\|v\|_{3}.
\end{align*}
These two inequalities yield
\begin{equation}
|\langle v,\partial_{x}B^{-1}[v_{x}v_{xxx}]\rangle|_{2}\le(2\pi)^{4}d_{1}\|v\|_{3}\|v\|_{2}^{2}\le(2\pi)^{4}d_{1}\|v\|_{4}\|v\|_{2}^{2}.\label{eq:pairing v_x v_xxx j=00003D0}
\end{equation}
We have used $\|v\|_{3}\le\|v\|_{4}$, because $\|v\|_{4}$ will inevitably
appear later. We deal with it by modifying the definition of $\mathcal{O}$,
so that $\|v\|_{3}$ and $\|v\|_{4}$ are bounded there. Proposition
\ref{prop:Liapnov} should be modified accordingly.

Next assume $j=1$. We have
\[
|\langle v^{(1)},\partial_{x}^{2}B^{-1}[v_{x}v_{xxx}]\rangle|_{2}\le\|v^{(1)}\|_{2}\|\partial_{x}^{2}B^{-1}[v_{x}v_{xxx}]\|_{2}
\]
and, by Proposition \ref{prop:multiplication} (ii) again, 
\begin{align*}
\|\partial_{x}^{2}B^{-1}[v_{x}v_{xxx}]\|_{2} & \le\|v_{x}v_{xxx}\|_{0}\le d_{1}\|v_{x}\|_{1}\|v_{xxx}\|_{0}\\
 & \le(2\pi)^{3}d_{1}\|v\|_{2}\|v^{(1)}\|_{2}.
\end{align*}
Therefore 
\begin{align}
\left|e^{4\pi\sigma}\langle v^{(1)},\partial_{x}^{2}B^{-1}[v_{x}v_{xxx}]\rangle_{2}\right| & \le(2\pi)^{3}d_{1}\|v\|_{2}e^{4\pi\sigma}\|v^{(1)}\|_{2}^{2}\label{eq:pairing v_x v_xxx j=00003D1}\\
 & \le(2\pi)^{3}d_{1}\|v\|_{4}e^{4\pi\sigma}\|v^{(1)}\|_{2}^{2}.\nonumber 
\end{align}
Next we assume $j=2$. We have
\[
|\langle v^{(2)},\partial_{x}^{3}B^{-1}[v_{x}v_{xxx}]\rangle|_{2}\le\|v^{(2)}\|_{2}\|\partial_{x}^{3}B^{-1}[v_{x}v_{xxx}]\|_{2}\le(2\pi)^{2}\|v\|_{4}\|\partial_{x}^{3}B^{-1}[v_{x}v_{xxx}]\|_{2}
\]
and, by Proposition \ref{prop:multiplication} (ii), 
\begin{align*}
\|\partial_{x}^{3}B^{-1}[v_{x}v_{xxx}]\|_{2} & \le\|\partial_{x}(v_{x}v_{xxx})\|_{0}\le\|v^{(2)}v^{(3)}+v^{(1)}v^{(4)}\|_{0}\\
 & \le d_{1}(\|v^{(2)}\|_{1}\|v^{(3)}\|_{0}+\|v^{(4)}\|_{0}\|v^{(1)}\|_{1})\le(2\pi+4\pi^{2})d_{1}\|v^{(2)}\|_{2}^{2}.
\end{align*}
Therefore
\begin{equation}
\left|e^{8\pi\sigma}\langle v^{(2)},\partial_{x}^{3}B^{-1}[v_{x}v_{xxx}]\rangle_{2}\right|\le(8\pi^{3}+16\pi^{4})d_{1}\|v\|_{4}e^{8\pi\sigma}\|v^{(2)}\|_{2}^{2}.\label{eq:pairing v_x v_xxx j=00003D2}
\end{equation}

By \eqref{eq:pairing v_x v_xxx j >=00003D3}, \eqref{eq:pairing v_x v_xxx j=00003D0},
\eqref{eq:pairing v_x v_xxx j=00003D1} and \eqref{eq:pairing v_x v_xxx j=00003D2},
we obtain
\begin{align}
 & \left|\sum_{j=0}^{m}\frac{1}{j!^{2}}e^{4\pi\sigma j}\langle v^{(j)},\partial_{x}^{j+1}B^{-1}(v_{x}v_{xxx})\rangle_{2}\right|\label{eq:pairing v_x v_xxx j=00003D0 to m}\\
 & \le(8\pi^{3}+16\pi^{4})d_{1}\|v\|_{4}\Phi_{\sigma,m}(v)+\frac{2\pi\gamma}{\sqrt{3}}\Phi_{\sigma,m}(v)^{1/2}\partial_{\sigma}\Phi_{\sigma,m}(v).\nonumber 
\end{align}

\subsubsection{Estimate of $\langle v^{(j)},\partial_{x}^{j+1}B^{-1}\left[v_{xx}^{2}\right]\rangle_{2}$}

First we assume $j\ge3$. Since 
\begin{align*}
\langle v^{(j)},\partial_{x}^{j+1}B^{-1}\left[v_{xx}^{2}\right]\rangle_{2} & =\langle v^{(j)},(\partial_{x}^{4}B^{-1})\partial_{x}^{j-3}\left[v_{xx}^{2}\right]\rangle_{2}\\
 & =\sum_{\ell=0}^{j-3}\binom{j-3}{\ell}\langle v^{(j)},(\partial_{x}^{4}B^{-1})v^{(\ell+2)}v^{(j-\ell-1)}\rangle_{2}\\
 & =\sum_{\ell=2}^{j-1}\binom{j-3}{\ell-2}\langle v^{(j)},(\partial_{x}^{4}B^{-1})v^{(\ell)}v^{(j-\ell+1)}\rangle_{2}.
\end{align*}
This is better than \eqref{eq:Q_j}. We follow \eqref{eq:sum Q_j}
with $j\ge3$ instead of $j\ge0$ and get
\begin{equation}
\left|\sum_{j=3}^{m}\frac{1}{j!^{2}}e^{4\pi\sigma j}\langle v^{(j)},\partial_{x}^{j+1}B^{-1}(v_{xx}^{2})\rangle_{2}\right|\le\frac{2\pi\gamma}{\sqrt{3}}\Phi_{\sigma,m}(v)^{1/2}\partial_{\sigma}\Phi_{\sigma,m}(v).\label{eq:pairing v_xx^2  j >=00003D3}
\end{equation}
Next we consider the case $j=0$. We have
\[
|\langle v,\partial_{x}B^{-1}[v_{xx}^{2}]\rangle|_{2}\le\|v\|_{2}\|\partial_{x}B^{-1}[v_{xx}^{2}]\|_{2}
\]
and, by Proposition \ref{prop:multiplication} (ii),
\begin{align*}
\|\partial_{x}B^{-1}[v_{xx}^{2}]\|_{2} & \le\|v_{xx}^{2}\|_{0}\le d_{1}\|v_{xx}\|_{1}\|v_{xx}\|_{0}\\
 & \le(2\pi)^{4}d_{1}\|v\|_{3}\|v\|_{2}.
\end{align*}
These two inequalities yield
\begin{equation}
|\langle v,\partial_{x}B^{-1}[v_{xx}^{2}]\rangle|_{2}\le(2\pi)^{4}d_{1}\|v\|_{3}\|v\|_{2}^{2}\le(2\pi)^{4}d_{1}\|v\|_{4}\|v\|_{2}^{2}.\label{eq:pairing v_xx^2  j=00003D0}
\end{equation}

Next assume $j=1$. We have
\[
|\langle v^{(1)},\partial_{x}^{2}B^{-1}[v_{xx}^{2}]\rangle_{2}|\le\|v^{(1)}\|_{2}\|\partial_{x}^{2}B^{-1}[v_{xx}^{2}]\|_{2}
\]
and, by Proposition \ref{prop:multiplication} (ii), 
\begin{align*}
\|\partial_{x}^{2}B^{-1}[v_{xx}^{2}]\|_{2} & \le\|v_{xx}^{2}\|_{0}\le d_{1}\|v_{xx}\|_{1}\|v_{xx}\|_{0}\\
 & \le(2\pi)^{3}d_{1}\|v^{(1)}\|_{2}\|v\|_{2}.
\end{align*}
Therefore 
\begin{align}
\left|e^{4\pi\sigma}\langle v^{(1)},\partial_{x}^{2}B^{-1}[v_{xx}^{2}]\rangle_{2}\right| & \le(2\pi)^{3}d_{1}\|v\|_{2}e^{4\pi\sigma}\|v^{(1)}\|_{2}^{2}\label{eq:pairing v_xx^2  j=00003D1}\\
 & \le(2\pi)^{3}d_{1}\|v\|_{4}e^{4\pi\sigma}\|v^{(1)}\|_{2}^{2}.\nonumber 
\end{align}
Next assume $j=2$. We have
\begin{align*}
|\langle v^{(2)},\partial_{x}^{3}B^{-1}[v_{xx}^{2}]\rangle_{2}| & \le\|v^{(2)}\|_{2}\|(\partial_{x}^{3}B^{-1})[v_{xx}^{2}]\|_{2}\\
 & \le(2\pi)^{2}\|v\|_{4}\|(\partial_{x}^{3}B^{-1})[v_{xx}^{2}]\|_{2}
\end{align*}
and 
\begin{align*}
\|(\partial_{x}^{3}B^{-1})[v_{xx}^{2}]\|_{2} & \le\|v_{xx}^{2}\|_{1}\le c_{1}\|v_{xx}\|_{1}^{2}\le c_{1}\|v^{(2)}\|_{2}^{2}.
\end{align*}
Therefore
\begin{equation}
\left|e^{8\pi\sigma}\langle v^{(2)},\partial_{x}^{3}B^{-1}[v_{xx}^{2}]\rangle_{2}\right|\le(2\pi)^{2}c_{1}\|v\|_{4}e^{8\pi\sigma}\|v^{(2)}\|_{2}^{2}.\label{eq:pairing v_xx^2  j=00003D2}
\end{equation}
By using \eqref{eq:pairing v_xx^2  j >=00003D3}, \eqref{eq:pairing v_xx^2  j=00003D0},
\eqref{eq:pairing v_xx^2  j=00003D1} and \eqref{eq:pairing v_xx^2  j=00003D2},
we obtain
\begin{align}
 & \left|\sum_{j=0}^{m}\frac{1}{j!^{2}}e^{4\pi\sigma j}\langle v^{(j)},\partial_{x}^{j+1}B^{-1}(v_{xx}^{2})\rangle_{2}\right|\label{eq:pair v_xx^2  0m}\\
 & \le(16\pi^{4}d_{1}+4\pi^{2}c_{1})\|v\|_{4}\Phi_{\sigma,m}(v)+\frac{2\pi\gamma}{\sqrt{3}}\Phi_{\sigma,m}(v)^{1/2}\partial_{\sigma}\Phi_{\sigma,m}(v).\nonumber 
\end{align}

\section*{Appendix: Local study of the non-quasilinear modified $\mu$CH equation\label{sec:Appendix: local-study}}

The difficulty of  \eqref{eq:modifiedmuCH2} lies in the presence
of the non-quasilinear terms $-\frac{1}{3}u_{x}^{3}$ and $-\frac{1}{3}\mu(u_{x}^{3})$.
In \cite{BHPpower}, the authors employed the power series method
to deal with the non-quasilinear term $au^{k-2}u_{x}^{3}$ of the
$k$-$abc$-equation. In the present paper, we overcome the difficulty
of non-quasilinearity by a $\mu$-version of a classical trick used
in the proof of the Cauchy-Kowalevsky theorem (\cite{CourantHilbert})
following \cite{HM} and \cite{FuYing}. We set $v=u_{x}$ and differentiate
 \eqref{eq:modifiedmuCH2} in $x$. It can be proved that  \eqref{eq:modifiedmuCH2}
is equivalent to the following quasilinear modified $\mu$CH system:
\begin{equation}
\begin{cases}
u_{t}+2\mu(u)uv-\dfrac{v^{3}}{3}+\partial_{x}A^{-1}\left[2\mu^{2}(u)u+\mu(u)v^{2}+\gamma u\right]+\dfrac{\mu(v^{3})}{3}=0, & \hspace{-2em}\\
v_{t}+2\mu(u)(uv)_{x}-\dfrac{(v^{3})_{x}}{3}+\partial_{x}A^{-1}\left[2\mu^{2}(u)v+\mu(u)(v^{2})_{x}+\gamma v\right]=0. & \hspace{-2em}
\end{cases}\label{eq:modifiedmuCHsystem}
\end{equation}

Of course, this trick works for the the $k$-$abc$-equation as well.
We can prove unique solvability of the Cauchy problem for \eqref{eq:modifiedmuCHsystem}
and  \eqref{eq:modifiedmuCH2}.

The Cauchy problem \eqref{eq:modifiedmuCH2} for the non-quasilinear
modified $\mu$CH equation can be written in the following form: 
\begin{equation}
\begin{cases}
u_{t}+2\mu(u)uv-\frac{1}{3}v^{3}+\partial_{x}A^{-1}\left[2\mu^{2}(u)u+\mu(u)v^{2}+\gamma u\right]+\frac{1}{3}\mu(v^{3})=0,\\
v=u_{x},\\
u(0,x)=u_{0}(x).
\end{cases}\label{eq:modifiedmuCH-1}
\end{equation}

We introduce the system below.
\begin{equation}
\begin{cases}
u_{t}+2\mu(u)uv-\dfrac{1}{3}v^{3}+\partial_{x}A^{-1}\left[2\mu^{2}(u)u+\mu(u)v^{2}+\gamma u\right]+\dfrac{1}{3}\mu(v^{3})=0, & \hspace{-2em}\\
v_{t}+2\mu(u)(uv)_{x}-\dfrac{1}{3}(v^{3})_{x}+\partial_{x}A^{-1}\left[2\mu^{2}(u)v+\mu(u)(v^{2})_{x}+\gamma v\right]=0, & \hspace{-2em}\\
u(0,x)=u_{0}(x),v(0,x)=v_{0}(x). & \hspace{-2em}
\end{cases}\label{eq:modifiedmuCHsystem-1}
\end{equation}

\begin{thm}
\label{thm:equivalence}The Cauchy problems  \eqref{eq:modifiedmuCH-1}
and  \eqref{eq:modifiedmuCHsystem-1} are equivalent to each other
if $v_{0}(x)=\partial_{x}u_{0}(x)$. In particular, \eqref{eq:modifiedmuCHsystem-1}
implies $v=\partial_{x}u=u_{x}$ if $v_{0}(x)=\partial_{x}u_{0}(x)$.
\end{thm}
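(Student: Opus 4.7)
The plan is to prove both implications. The direction $\eqref{eq:modifiedmuCH-1}\Rightarrow\eqref{eq:modifiedmuCHsystem-1}$ is a direct computation: if $u$ satisfies \eqref{eq:modifiedmuCH-1} and $v=u_{x}$, then the first line of \eqref{eq:modifiedmuCHsystem-1} is just the scalar equation, and applying $\partial_{x}$ to it yields the second line after noting that $\mu(v^{3})/3$ is $x$-constant (so $\partial_{x}$ annihilates it), $\mu(u)$ commutes past $\partial_{x}$ and $A^{-1}$ for the same reason, $u_{tx}=v_{t}$, and $\partial_{x}^{2}A^{-1}[2\mu^{2}(u)u+\mu(u)v^{2}+\gamma u]$ rewrites as $\partial_{x}A^{-1}[2\mu^{2}(u)v+\mu(u)(v^{2})_{x}+\gamma v]$ once one pushes $\partial_{x}$ inside and uses $v=u_{x}$.

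The converse is the heart of the theorem. Given $(u,v)$ solving \eqref{eq:modifiedmuCHsystem-1} with $v_{0}=\partial_{x}u_{0}$, I set $w:=u_{x}-v$, so $w(0,\cdot)=0$. Differentiating the first equation of \eqref{eq:modifiedmuCHsystem-1} in $x$ and subtracting the second, the advection terms $2\mu(u)(uv)_{x}-(v^{3})_{x}/3$ cancel verbatim; inside the nonlocal brackets, $\partial_{x}^{2}A^{-1}[\mu(u)v^{2}]$ and $\partial_{x}A^{-1}[\mu(u)(v^{2})_{x}]$ coincide (since $\mu(u)$ is an $x$-scalar and $A^{-1}$ commutes with $\partial_{x}$), while the $2\mu^{2}(u)u+\gamma u$ and $2\mu^{2}(u)v+\gamma v$ pieces collapse into $(2\mu^{2}(u)+\gamma)\partial_{x}A^{-1}[u_{x}-v]$. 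What survives is the linear equation
\[
w_{t}+\bigl(2\mu^{2}(u)+\gamma\bigr)\,\partial_{x}A^{-1}w=0.
\]

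Because $|\mu(u(t))|$ stays bounded on any compact interval of existence and Proposition~\ref{prop:smoothing} gives $\|\partial_{x}A^{-1}w\|_{\delta,s}\le\|\partial_{x}A^{-1}w\|_{\delta,s+1}\le\|w\|_{\delta,s}$, integrating in $t$ and applying Gronwall to $\|w(t)\|_{\delta,s}$ with initial datum $0$ forces $w\equiv0$, i.e.\ $v=u_{x}$, so $(u,v)$ in fact solves \eqref{eq:modifiedmuCH-1}. The main obstacle is the bookkeeping in the subtraction step: one must recognise that every $\mu$-factor is genuinely $x$-independent (so it commutes with both $\partial_{x}$ and $A^{-1}$) and that the non-quasilinear contributions $v^{3}$ and $\mu(v^{3})$ drop out cleanly, so that the resulting equation for $w$ is linear with a smoothing coefficient; once this cancellation is visible, uniqueness is routine.
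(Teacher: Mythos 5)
Your proof is correct and follows essentially the same route as the paper: the forward direction by differentiating in $x$, and the converse by subtracting to obtain the linear equation $w_{t}+(2\mu^{2}(u)+\gamma)\partial_{x}A^{-1}w=0$ for $w=u_{x}-v$ with $w(0,\cdot)=0$. The only (harmless) difference is the final uniqueness step: the paper integrates the resulting ODE for each Fourier coefficient $w_{k}(t)$ explicitly, whereas you invoke the boundedness of $\partial_{x}A^{-1}$ from Proposition \ref{prop:smoothing} together with Gronwall's inequality; both arguments are valid.
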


\begin{proof}
By differentiation with respect to $x$, we get the second equation
in  \eqref{eq:modifiedmuCHsystem-1} from \eqref{eq:modifiedmuCH-1}.

To show the converse, differentiate both sides of the first equation
of  \eqref{eq:modifiedmuCHsystem-1} in $x$. By comparing it with
the second equation, we get
\[
(v-u_{x})_{t}+\partial_{x}A^{-1}\left[2\mu^{2}(u)(v-u_{x})+\gamma(v-u_{x})\right]=0.
\]
 It is enough to prove that $w_{t}+\partial_{x}A^{-1}[a(t)w]=0$ and
$w(0,x)=0$ imply $w=0$, where $a(t)$ is a continuous function in
$t$. Set $w=\sum_{k\in\mathbb{Z}}w_{k}(t)e^{2k\pi ix}$. Then we
have 
\[
w_{0}'(t)=0,\;w_{k}'(t)-\frac{a(t)w_{k}(t)}{2k\pi i}=0\,(k\ne0).
\]
Since $w_{k}(0)=0$, we have $w_{k}(t)=0$ ($k\in\mathbb{Z}$) for
any $t$. It implies $w=0$ and $v=u_{x}.$
\end{proof}
\begin{rem}
The trick of setting $v=u_{x}$ has been used in \cite{HM} (non-$\mu$
equation) and \cite{FuYing} in a different function space rather
formally, i.e. without discussion corresponding to Theorem \ref{thm:equivalence}.
In \cite{HM}, this trick is applied to a quasilinear equation.
\end{rem}

\begin{thm}
\label{thm:system} If $u_{0}$ and $v_{0}$ are real-analytic functions
on $S^{1}$, then the Cauchy problem  \eqref{eq:modifiedmuCHsystem-1}
has a holomorphic solution near $t=0$. More precisely, we have the
following:

(i) There exists $\Delta>0$ such that $u_{0},v_{0}\in G^{\Delta,s+1}\subset A(\Delta)$
for any $s$.

(ii) If $s>1/2$, there exists a positive time $T_{\Delta}=T(u_{0},v_{0},s,\Delta)$
such that for every $d\in(0,1)$, the Cauchy problem \eqref{eq:modifiedmuCHsystem-1}
has a unique solution which is a holomorphic function valued in $\oplus^{2}G^{\Delta d,s+1}$
in the disk $D(0,T_{\Delta}(1-d))$. Furthermore, if $\gamma=0$,
the analytic lifespan $T_{\Delta}$ satisfies 
\[
T_{\Delta}=\frac{\mathrm{const.}}{\|(u_{0},u_{0}')\|_{\Delta,s+1}^{2}},
\]
when $\Delta$ is fixed. On the other hand, if $\gamma\ne0$, we have
the asymptotic behavior
\[
T_{\Delta}\approx\frac{\mathrm{const.}}{\|(u_{0},v_{0})\|_{\Delta,s+1}^{2}}\;(\text{large initial values}),\quad T_{\Delta}\approx\mathrm{const.}\;(\text{small initial values}).
\]
\end{thm}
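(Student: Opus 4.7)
Part (i) is immediate from Proposition \ref{prop:analyticity}, applied to $u_0$ and $v_0$ separately and then taking $\Delta$ to be the minimum of the two resulting radii. For (ii), my plan is to mimic the proof of Theorem \ref{thm:muCH IVP2} with the product scale
\begin{align*}
X_d := G^{\Delta d,s+1}\oplus G^{\Delta d,s+1}, \qquad \|(u,v)\|_{d,s+1}^{(\Delta)} := \|u\|_{\Delta d,s+1} + \|v\|_{\Delta d,s+1},
\end{align*}
for $0<d\le 1$, so that $(u_0,v_0)\in X_1$ by part (i). Let $F(u,v)=(F_1(u,v),F_2(u,v))$ denote the negative of the nonlinearity appearing in \eqref{eq:modifiedmuCHsystem-1}, i.e.\ what stands on the right of $u_t$ and $v_t$ once moved across the equals sign. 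I will verify conditions (a) and (b) of Theorem \ref{thm:Ov}; condition (b) is routine since each building block is a polynomial composition of bounded operations on $G^{\delta,s+1}$.

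The core of the argument is the term-by-term estimation of $F(u,v)-F(\tilde u,\tilde v)$ in $X_{d'}$ under the working assumption $\|(u,v)-(u_0,v_0)\|_{d,s+1}^{(\Delta)}<R$ and the analogous bound for $(\tilde u,\tilde v)$. The cubic terms $v^3$ and $\mu(v^3)$ are handled via the factorization $v^3-\tilde v^3=(v^2+v\tilde v+\tilde v^2)(v-\tilde v)$ combined with two applications of Proposition \ref{prop:multiplication}(i); Proposition \ref{prop:smoothing} then absorbs any scalar factor $\mu(\cdot)$. Triple products such as $\mu(u)uv$ and $\mu^2(u)u$ are split by standard telescoping. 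Each term involving a space derivative of a product in $F_2$---namely $(v^3)_x$, $(uv)_x$ and $(v^2)_x$---is estimated via the first inequality in Proposition \ref{prop:partial_x}, which produces the required $1/(\delta-\delta')$ factor. The pseudodifferential pieces $\partial_x A^{-1}[\,\cdots\,]$ are treated exactly as in the derivations of \eqref{eq:u^2}, \eqref{eq:partial A^-1 mu(u)u} and \eqref{eq:partial A^-1 u_x^2}, using \eqref{eq:partialx A-1}. Summing all contributions yields
\begin{align*}
\|F(u,v)-F(\tilde u,\tilde v)\|_{d',s+1}^{(\Delta)} \le \frac{L_\Delta}{d-d'}\,\|(u-\tilde u,v-\tilde v)\|_{d,s+1}^{(\Delta)},
\end{align*}
with $L_\Delta=C_1\Delta^{-1}\bigl[(\|(u_0,v_0)\|_{\Delta,s+1}+R)^2 + |\gamma|\bigr]$, and an analogous evaluation at $(u_0,v_0)$ gives
\begin{align*}
\|F(u_0,v_0)\|_{d,s+1}^{(\Delta)} \le \frac{M_\Delta}{1-d}, \qquad M_\Delta = C_2\Delta^{-1}\bigl[\|(u_0,v_0)\|_{\Delta,s+1}^{3} + |\gamma|\,\|(u_0,v_0)\|_{\Delta,s+1}\bigr].
\end{align*}

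Theorem \ref{thm:Ov} then delivers a unique holomorphic solution on $D(0,T_\Delta(1-d))$ with values in $X_d$, where $T_\Delta=R/(16L_\Delta R+8M_\Delta)$. Choosing $R:=\|(u_0,v_0)\|_{\Delta,s+1}$, every contribution to the denominator is of order $\|(u_0,v_0)\|_{\Delta,s+1}^3 + |\gamma|\,\|(u_0,v_0)\|_{\Delta,s+1}$, so
\begin{align*}
T_\Delta \;\approx\; \frac{\Delta\,\|(u_0,v_0)\|_{\Delta,s+1}}{\|(u_0,v_0)\|_{\Delta,s+1}^{\,3} + |\gamma|\,\|(u_0,v_0)\|_{\Delta,s+1}}.
\end{align*}
When $\gamma=0$ this is $\mathrm{const}/\|(u_0,v_0)\|_{\Delta,s+1}^{2}$, and since Theorem \ref{thm:equivalence} lets us take $v_0=u_0'$ in the reduction from \eqref{eq:modifiedmuCH-1}, we recover the stated bound $T_\Delta=\mathrm{const}/\|(u_0,u_0')\|_{\Delta,s+1}^{2}$. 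When $\gamma\ne0$, the cubic term dominates for large data (giving $T_\Delta\approx\mathrm{const}/\|(u_0,v_0)\|_{\Delta,s+1}^{2}$) while the linear $|\gamma|$-term dominates for small data (giving $T_\Delta\approx\mathrm{const}$), matching the two asymptotic regimes in the statement.

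The main obstacle is the bookkeeping forced on us by the cubic, non-quasilinear nonlinearity: unlike Theorem \ref{thm:muCH IVP2}, every factorization splits into three summands, and the resulting Lipschitz constant is unavoidably quadratic in $R+\|(u_0,v_0)\|_{\Delta,s+1}$. The second coordinate $F_2$ is the delicate one, since its derivative terms $(v^3)_x$, $(uv)_x$ and $(v^2)_x$ are not smoothed by any pseudodifferential factor and must be handled via the ``larger $\delta$'' inequality of Proposition \ref{prop:partial_x}; this is precisely what dictates the product-scale setup above. The mild subtlety of isolating the $|\gamma|$ contribution in $L_\Delta$ and $M_\Delta$ is what produces the two distinct asymptotic regimes in the lifespan when $\gamma\ne0$.
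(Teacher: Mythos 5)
Your proposal is correct and follows essentially the same route as the paper: the same product scale $\oplus^{2}G^{\Delta d,s+1}$ with the sum norm, the same telescoping of the triple products, the same factorization $v^{3}-\tilde v^{3}=(v^{2}+v\tilde v+\tilde v^{2})(v-\tilde v)$, the same use of Propositions \ref{prop:multiplication}, \ref{prop:partial_x}, \ref{prop:smoothing} and \eqref{eq:partialx A-1}, and the same bookkeeping of the quadratic Lipschitz constant plus the separate $|\gamma|$ contribution that yields the two asymptotic regimes for the lifespan. The only cosmetic difference is that you carry the general $\Delta$ through the whole argument, whereas the paper works out $\Delta=1$ in detail and rescales as in Theorem \ref{thm:muCH IVP2}.
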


\begin{proof}
We give a detailed proof assuming $\Delta=1$. The general case can
be proved in the same way as for Theorem \ref{thm:muCH IVP2}. The
norm on $\oplus^{2}G^{\delta,s+1}$ is defined by $\|(u,v)\|_{\delta,s+1}=\|u\|_{\delta,s+1}+\|v\|_{\delta,s+1}$.
We use the same notation $\|\cdot\|_{\delta,s+1}$ for $G^{\delta,s+1}$
and $\oplus^{2}G^{\delta,s+1}$. Assume $\|(u,v)-(u_{0},v_{0})\|_{\delta,s+1}<R,\|(u',v')-(u_{0},v_{0})\|_{\delta,s+1}<R$.
Set $R_{s+1}=R+\max\left(\|u_{0}\|_{1,s+1},\|v_{0}\|_{1,s+1}\right)$.
Assume $0<\delta'<\delta\le1$. Then by \eqref{eq:ball1}, we have
\begin{equation}
\|w\|_{\delta,s+1}<R_{s+1},\;|\mu(w)|<R_{s+1},\label{eq:R_=00007Bs+1=00007D}
\end{equation}
where $w$ is any of $u,v,u',v'$. Let us consider differences concerning
$\mu(u)uv$ and $\mu(u)(uv)_{x}$. Since $uv-u'v'=u(v-v')+(u-u')v'$,
we get 
\begin{align}
 & \|uv-u'v'\|_{\delta',s+1}\le\|uv-u'v'\|_{\delta,s+1}\label{eq:uv}\\
 & \le c_{s+1}\left(\|u\|_{\delta,s+1}\|v-v'\|_{\delta,s+1}+\|u-u'\|_{\delta,s+1}\|v'\|_{\delta,s+1}\right)\nonumber \\
 & \le c_{s+1}R_{s+1}\|(u,v)-(u',v')\|_{\delta,s+1}.\nonumber 
\end{align}
Combining \eqref{eq:uv} with Proposition \ref{prop:partial_x}, we
get 
\begin{align}
\|(uv)_{x}-(u'v')_{x}\|_{\delta',s+1} & \le\frac{e^{-1}}{\delta-\delta'}\|(uv)_{x}-(u'v')_{x}\|_{\delta,s+1}\label{eq:uv-1}\\
 & \le\frac{e^{-1}c_{s+1}R_{s+1}}{\delta-\delta'}\|(u,v)-(u',v')\|_{\delta,s+1}.\nonumber 
\end{align}

On the other hand, we have 
\begin{equation}
\|(u'v')_{x}\|_{\delta',s+1}\le\frac{e^{-1}}{\delta-\delta'}\|u'v'\|_{\delta,s+1}\le\frac{e^{-1}c_{s+1}R_{s+1}^{2}}{\delta-\delta'}.\label{eq:u'v'_x}
\end{equation}

Combining 
\begin{align*}
\mu(u)uv-\mu(u')u'v' & =\mu(u)(uv-u'v')+\left[\mu(u)-\mu(u')\right]u'v'
\end{align*}
with \eqref{eq:R_=00007Bs+1=00007D} and  \eqref{eq:uv}, we obtain
\begin{align}
 & \|\mu(u)uv-\mu(u')u'v'\|_{\delta',s+1}\label{eq:mu u uv}\\
 & \le|\mu(u)|\|uv-u'v'\|_{\delta',s+1}+|\mu(u)-\mu(u')|\|u'v'\|_{\delta',s+1}\nonumber \\
 & \le R_{s+1}\|uv-u'v'\|_{\delta',s+1}+c_{s+1}R_{s+1}^{2}\|u-u'\|_{\delta',s+1}\nonumber \\
 & \le2c_{s+1}R_{s+1}^{2}\|(u,v)-(u',v')\|_{\delta,s+1}\nonumber \\
 & \le\frac{2c_{s+1}R_{s+1}^{2}}{\delta-\delta'}\|(u,v)-(u',v')\|_{\delta,s+1}.\nonumber 
\end{align}
Next, combining 
\[
\mu(u)(uv)_{x}-\mu(u')(u'v')_{x}=\mu(u)\left[(uv)_{x}-(u'v')_{x}\right]+\left[\mu(u)-\mu(u')\right](u'v')_{x}
\]
with  \eqref{eq:uv-1} and  \eqref{eq:u'v'_x}, we obtain 
\begin{align}
 & \|\mu(u)(uv)_{x}-\mu(u')(u'v')_{x}\|_{\delta',s+1}\label{eq:mu u uv-1}\\
 & \le|\mu(u)|\|(uv)_{x}-(u'v')_{x}\|_{\delta',s+1}+|\mu(u)-\mu(u')|\|(u'v')_{x}\|_{\delta',s+1}\nonumber \\
 & \le\frac{2e^{-1}c_{s+1}R_{s+1}^{2}}{\delta-\delta'}\|(u,v)-(u',v')\|_{\delta,s+1}.\nonumber 
\end{align}

The next step is to consider $-v^{3}/3$, $-(v^{3})_{x}/3$ and $\mu(v^{3})$.
 The factorization $v^{3}-v'^{3}=(v^{2}+vv'+v'^{2})(v-v')$ implies
\[
\|v^{3}-v'^{3}\|_{\delta,s+1}\le c_{s+1}\|v^{2}+vv'+v'^{2}\|_{\delta,s+1}\|v-v'\|_{\delta,s+1}.
\]

Here we have 
\[
\|v^{2}+vv'+v'\|_{\delta,s+1}^{2}\le c_{s+1}\left(\|v\|_{\delta,s+1}^{2}+\|v\|_{\delta,s+1}\|v'\|_{\delta,s+1}+\|v'\|_{\delta,s+1}^{2}\right)\le3c_{s+1}R_{s+1}^{2}.
\]
Therefore 
\begin{equation}
\|v^{3}-v'^{3}\|_{\delta,s+1}\le3c_{s+1}^{2}R_{s+1}^{2}\|v-v'\|_{\delta,s+1},\label{eq:v^3}
\end{equation}
and it immediately gives
\begin{equation}
\|v^{3}-v'^{3}\|_{\delta,s+1}\le\frac{3c_{s+1}^{2}R_{s+1}^{2}}{\delta-\delta'}\|v-v'\|_{\delta,s+1}.\label{eq:v^3-2}
\end{equation}
Combining \eqref{eq:v^3} with Proposition \ref{prop:partial_x},
we get 
\begin{align}
\|(v^{3})_{x}-(v'^{3})_{x}\|_{\delta',s+1} & \le\frac{e^{-1}}{\delta-\delta'}\|v^{3}-v'^{3}\|_{\delta,s+1}\label{eq:v^3-1}\\
 & \le\frac{3e^{-1}c_{s+1}^{2}R_{s+1}^{2}}{\delta-\delta'}\|v-v'\|_{\delta,s+1}.\nonumber 
\end{align}
An immediate consequence of \eqref{eq:v^3-2} is 
\begin{equation}
\|\mu(v^{3})-\mu(v'^{3})\|_{\delta',s+1}=\|v^{3}-v'^{3}\|_{\delta',s+1}\le\frac{3c_{s+1}^{2}R_{s+1}^{2}}{\delta-\delta'}\|v-v'\|_{\delta,s+1}.\label{eq:mu v 3}
\end{equation}
The last step is to consider the images of $\mu^{2}(u)u,\mu^{2}(u)v,\mu(u)v^{2}$
and $\mu(u)(v^{2})_{x}$ under $\partial_{x}A^{-1}$. Since 
\begin{align*}
\mu^{2}(u)u-\mu^{2}(u')u' & =\mu^{2}(u)(u-u')+\left[\mu(u)+\mu(u')\right]\left[\mu(u)-\mu(u')\right]u',\\
\mu^{2}(u)v-\mu^{2}(u')v' & =\mu^{2}(u)(v-v')+\left[\mu(u)+\mu(u')\right]\left[\mu(u)-\mu(u')\right]v',\\
\mu(u)v^{2}-\mu(u')v'^{2} & =\mu(u)(v^{2}-v'^{2})+\left[\mu(u)-\mu(u')\right]v'^{2},\\
\mu(u)(v^{2})_{x}-\mu(u')(v'^{2})_{x} & =\mu(u)(v^{2}-v'^{2})_{x}+\left[\mu(u)-\mu(u')\right](v'^{2})_{x},
\end{align*}
we have, by $c_{s}\le c_{s+1}$, $\|\cdot\|_{\delta,s}\le\|\cdot\|_{\delta,s+1}$
and $R_{s}\le R_{s+1}$, 
\begin{align*}
\|\mu^{2}(u)u-\mu^{2}(u')u'\|_{\delta,s} & \le|\mu^{2}(u)|\|u-u'\|_{\delta,s}+\left|\mu(u)+\mu(u')\right|\left|\mu(u-u')\right|\|u'\|_{\delta,s}\\
 & \le3R_{s+1}^{2}\|u-u'\|_{\delta,s+1},\\
\|\mu^{2}(u)v-\mu^{2}(u')v'\|_{\delta,s} & \le|\mu^{2}(u)|\|v-v'\|_{\delta,s}+\left|\mu(u)+\mu(u')\right|\left|\mu(u-u')\right|\|v'\|_{\delta,s}\\
 & \le3R_{s+1}^{2}\|(u,v)-(u',v')\|_{\delta,s+1}\\
\|\mu(u)v^{2}-\mu(u')v'^{2}\|_{\delta,s} & \le|\mu(u)|\|v^{2}-v'^{2}\|_{\delta,s}+\left|\mu(u-u')\right|\|v'^{2}\|_{\delta,s}\\
 & \le3c_{s+1}R_{s+1}^{2}\|(u,v)-(u',v')\|_{\delta,s+1},\\
\|\mu(u)(v^{2})_{x}-\mu(u')(v'^{2})_{x}\|_{\delta,s} & \le2\pi|\mu(u)|\|v^{2}-v'^{2}\|_{\delta,s+1}+2\pi\left|\mu(u-u')\right|\|v'^{2}\|_{\delta,s+1}\\
 & \le6\pi c_{s+1}R_{s+1}^{2}\|(u,v)-(u',v')\|_{\delta,s+1}.
\end{align*}
To derive the last one, we have used the second inequality of Proposition
\ref{prop:partial_x}. We employ  \eqref{eq:partialx A-1} to obtain
\begin{align}
 & \left\Vert \partial_{x}A^{-1}\left[\mu^{2}(u)u-\mu^{2}(u')u'\right]\right\Vert _{\delta',s+1}\le\frac{3e^{-1}R_{s+1}^{2}}{\delta-\delta'}\|u-u'\|_{\delta,s+1},\label{eq:719a}\\
 & \left\Vert \partial_{x}A^{-1}\left[\mu^{2}(u)v-\mu^{2}(u')v'\right]\right\Vert _{\delta',s+1}\le\frac{3e^{-1}R_{s+1}^{2}}{\delta-\delta'}\|(u,v)-(u',v')\|_{\delta,s+1},\label{eq:719b}\\
 & \left\Vert \partial_{x}A^{-1}\left[\mu(u)v^{2}-\mu(u')v'^{2}\right]\right\Vert _{\delta',s+1}\label{eq:719c}\\
 & \le\frac{3e^{-1}c_{s+1}R_{s+1}^{2}}{\delta-\delta'}\|(u,v)-(u',v')\|_{\delta,s+1},\nonumber \\
 & \left\Vert \partial_{x}A^{-1}\left[\mu(u)(v^{2})_{x}-\mu(u')(v'^{2})_{x}\right]\right\Vert _{\delta',s+1}\label{eq:719d}\\
 & \le\frac{6\pi e^{-1}c_{s+1}R_{s+1}^{2}}{\delta-\delta'}\|(u,v)-(u',v')\|_{\delta,s+1}.\nonumber 
\end{align}
It is much easier to obtain 
\begin{align}
\left\Vert \partial_{x}A^{-1}\left[\gamma u-\gamma u'\right]\right\Vert _{\delta',s+1} & \le\frac{e^{-1}\gamma}{\delta-\delta'}\|u-u'\|_{\delta,s+1},\label{eq:gamma u}\\
\left\Vert \partial_{x}A^{-1}\left[\gamma v-\gamma v'\right]\right\Vert _{\delta',s+1} & \le\frac{e^{-1}\gamma}{\delta-\delta'}\|v-v'\|_{\delta,s+1}.\label{eq:gamma v}
\end{align}

Set
\begin{align*}
 & F_{\mu,1}(u,v)=-2\mu(u)uv+\dfrac{1}{3}v^{3}-\partial_{x}A^{-1}\left[2\mu^{2}(u)u+\mu(u)v^{2}+\gamma u\right]-\dfrac{1}{3}\mu(v^{3}),\\
 & F_{\mu,2}(u,v)=-2\mu(u)(uv)_{x}+\dfrac{1}{3}(v^{3})_{x}-\partial_{x}A^{-1}\left[2\mu^{2}(u)v+\mu(u)(v^{2})_{x}+\gamma v\right].
\end{align*}
Then by \eqref{eq:mu u uv}, \eqref{eq:v^3}, \eqref{eq:719a}, \eqref{eq:719c},
\eqref{eq:mu v 3} and \eqref{eq:gamma u}, we have 
\begin{align}
 & \|F_{\mu,1}(u,v)-F_{\mu,1}(u',v')\|_{\delta',s+1}\label{eq:LipF1}\\
 & \le\frac{R_{s+1}^{2}(4c_{s+1}+2c_{s+1}^{2}+6e^{-1}+3e^{-1}c_{s+1})+e^{-1}\gamma}{\delta-\delta'}\|(u,v)-(u',v')\|_{\delta,s+1}.\nonumber 
\end{align}
By \eqref{eq:mu u uv-1}, \eqref{eq:v^3-1}, \eqref{eq:719b}, \eqref{eq:719d}
and \eqref{eq:gamma v}, we have 
\begin{align}
 & \|F_{\mu,2}(u,v)-F_{\mu,2}(u',v')\|_{\delta',s+1}\label{eq:LipF2}\\
 & \le\frac{e^{-1}R_{s+1}^{2}(4c_{s+1}+c_{s+1}^{2}+6+6\pi c_{s+1})+e^{-1}\gamma}{\delta-\delta'}\|(u,v)-(u',v')\|_{\delta,s+1}.\nonumber 
\end{align}
We have obtained two inequalities of the Lipschitz type.

Next we estimate $F_{j}(u_{0},v_{0})\,(j=1,2)$. We have 
\begin{align*}
\|\mu(u_{0})u_{0}v_{0}\|_{\delta,s+1} & \le c_{s+1}\|u_{0}\|_{1,s+1}^{2}\|v_{0}\|_{1,s+1},\\
\|v_{0}^{3}\|_{\delta,s+1} & \le c_{s+1}^{2}\|v_{0}\|_{1,s+1}^{3},\\
\|\partial_{x}A^{-1}\left[\mu^{2}(u_{0})u_{0}\right]\|_{\delta,s+1} & \le\frac{e^{-1}}{1-\delta}\|u_{0}\|_{1,s+1}^{3},\\
\|\partial_{x}A^{-1}\left[\mu(u_{0})v_{0}^{2}\right]\|_{\delta,s+1} & \le\frac{e^{-1}c_{s+1}}{1-\delta}\|u_{0}\|_{1,s+1}\|v_{0}\|_{1,s+1}^{2},\\
\|\mu(v_{0}^{3})\|_{\delta,s+1} & \le\|v_{0}\|_{1,s+1}^{3},\\
\|\partial_{x}A^{-1}[\gamma u_{0}]\|_{\delta,s+1} & \le\frac{e^{-1}\gamma}{1-\delta}\|u_{0}\|_{1,s+1}.
\end{align*}
Since $X^{2}Y\le(X+Y)^{3}/3,X^{3}\le(X+Y)^{3}$ for $X,Y\ge0$, we
get 
\begin{align}
 & \|F_{\mu,1}(u_{0},v_{0})\|_{\delta,s+1}\\
 & \le\frac{2c_{s+1}+c_{s+1}^{2}+6e^{-1}+e^{-1}c_{s+1}+1}{3(1-\delta)}\|(u_{0},v_{0})\|_{1,s+1}^{3}\nonumber \\
 & \quad+\frac{e^{-1}\gamma}{1-\delta}\|(u_{0},v_{0})\|_{1,s+1}.\nonumber 
\end{align}
Similarly we have 
\begin{align*}
\|\mu(u_{0})(u_{0}v_{0})_{x}\|_{\delta,s+1} & \le\frac{e^{-1}c_{s+1}}{1-\delta}\|u_{0}\|_{1,s+1}^{2}\|v_{0}\|_{1,s+1},\\
\|(v_{0}^{3})_{x}\|_{\delta,s+1} & \le\frac{e^{-1}c_{s+1}^{2}}{1-\delta}\|v_{0}\|_{1,s+1}^{3},\\
\|\partial_{x}A^{-1}\left[\mu^{2}(u_{0})v_{0}\right]\|_{\delta,s+1} & \le\frac{e^{-1}}{1-\delta}\|u_{0}\|_{1,s+1}^{2}\|v_{0}\|_{1,s+1},\\
\|\partial_{x}A^{-1}\left[\mu(u_{0})(v_{0}^{2})_{x}\right]\|_{\delta,s+1} & \le\frac{2\pi e^{-1}c_{s+1}}{1-\delta}\|u_{0}\|_{1,s+1}\|v_{0}\|_{1,s+1}^{2},\\
\|\partial_{x}A^{-1}[\gamma v_{0}]\|_{\delta,s+1} & \le\frac{e^{-1}\gamma}{1-\delta}\|v_{0}\|_{1,s+1}.
\end{align*}
Therefore 
\begin{align}
 & \|F_{\mu,2}(u_{0},v_{0})\|_{\delta,s+1}\\
 & \le\frac{e^{-1}(2c_{s+1}+c_{s+1}^{2}+2+2\pi c_{s+1})}{3(1-\delta)}\|(u_{0},v_{0})\|_{1,s+1}^{3}\nonumber \\
 & \quad+\frac{e^{-1}\gamma}{1-\delta}\|(u_{0},v_{0})\|_{1,s+1}.\nonumber 
\end{align}
\end{proof}
We set $R=\|(u_{0},v_{0})\|_{1,s+1}$. If $\gamma=0$, the constants
corresponding to $L$ and $M$ in \eqref{eq:Ov Fu0} and \eqref{eq:Ov Lipschitz}
are of degrees 2 and 3 respectively. Therefore $T$ equals a constant
multiple of $\|(u_{0},v_{0})\|_{1,s+1}^{-2}$ if $\gamma=0$. If $\gamma\ne0$,
we have to consider two cases separately: large or small initial values.
When the initial values are large, larger order terms are dominant
and $T$ is approximated by a constant multiple of $\|(u_{0},v_{0})\|_{1,s+1}^{-2}$,
while $T$ approaches a constant as the initial values approach 0.
Theorem \ref{thm:equivalence} allows us to get a result about the
original Cauchy problem \eqref{eq:modifiedmuCH-1}. We assume $u_{0}\in G^{\Delta,s+2}$
so that $v_{0}=\partial_{x}u_{0}$ belongs to $G^{\Delta,s+1}$.
\begin{thm}
If $u_{0}$ is a real-analytic function on $S^{1}$, then the Cauchy
problem  \eqref{eq:modifiedmuCH-1} has a holomorphic solution near
$t=0$. More precisely, we have the following:

(i) There exists $\Delta>0$ such that $u_{0}\in G^{\Delta,s+2}$
for any $s$.

(ii) If $s>1/2$, there exists a positive time $T_{\Delta}=T(u_{0},s,\Delta)$
such that for every $d\in(0,1)$, the Cauchy problem \eqref{eq:modifiedmuCH-1}
has a unique solution which is a holomorphic function valued in $G^{\Delta d,s+2}$
in the disk $D(0,T_{\Delta}(1-d))$. Furthermore, if $\gamma=0$,
the analytic lifespan $T_{\Delta}$ satisfies

\[
T_{\Delta}=\frac{\mathrm{const.}}{\|(u_{0},u_{0}')\|_{\Delta,s+1}^{2}},
\]
when $\Delta$ is fixed. On the other hand, if $\gamma\ne0$, we have
the asymptotic behavior
\[
T_{\Delta}\approx\frac{\mathrm{const.}}{\|(u_{0},u_{0}')\|_{\Delta,s+1}^{2}}\;(\text{large initial values}),\quad T_{\Delta}\approx\mathrm{const.}\;(\text{small initial values}).
\]
\end{thm}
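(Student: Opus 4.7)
The strategy is to reduce the non-quasilinear problem \eqref{eq:modifiedmuCH-1} to the quasilinear system \eqref{eq:modifiedmuCHsystem-1} via Theorem \ref{thm:equivalence}, solve the system by Theorem \ref{thm:system}, and then promote the regularity of the first component from the $s+1$-index to the $s+2$-index using the exact Fourier-side identity relating these norms. Part (i) is immediate from Proposition \ref{prop:analyticity}: real-analyticity on $S^{1}$ yields some $\Delta>0$ with $u_{0}\in G^{\Delta,s+2}$ for every $s$.

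For part (ii), set $v_{0}=\partial_{x}u_{0}$. Since $u_{0}\in G^{\Delta,s+2}$, the second inequality of Proposition \ref{prop:partial_x} (with $\delta'=\delta=\Delta$) gives $v_{0}\in G^{\Delta,s+1}$, so $(u_{0},v_{0})\in\oplus^{2}G^{\Delta,s+1}$. Theorem \ref{thm:system} then produces a unique holomorphic solution $(u,v)\colon D(0,T_{\Delta}(1-d))\to\oplus^{2}G^{\Delta d,s+1}$ of \eqref{eq:modifiedmuCHsystem-1}, and Theorem \ref{thm:equivalence} together with the choice $v_{0}=\partial_{x}u_{0}$ forces $v=u_{x}$; thus $u$ solves \eqref{eq:modifiedmuCH-1}. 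Uniqueness transfers in both directions: any solution of \eqref{eq:modifiedmuCH-1} valued in $G^{\Delta d,s+2}$ gives rise to a solution of the system via $v:=u_{x}$ with the correct initial data, so the uniqueness for \eqref{eq:modifiedmuCHsystem-1} furnished by Theorem \ref{thm:system} implies uniqueness for \eqref{eq:modifiedmuCH-1}.

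To obtain the claimed regularity in $G^{\Delta d,s+2}$, I use the elementary Fourier identity
\[
\|u(t)\|_{\Delta d,s+2}^{2}\;=\;\|u(t)\|_{\Delta d,s+1}^{2}+\frac{1}{(2\pi)^{2}}\|u_{x}(t)\|_{\Delta d,s+1}^{2},
\]
which follows from $\langle k\rangle^{2(s+2)}=\langle k\rangle^{2(s+1)}+k^{2}\langle k\rangle^{2(s+1)}$ and $|\widehat{u_{x}}(k)|=2\pi|k|\,|\hat{u}(k)|$. Since both $\|u(t)\|_{\Delta d,s+1}$ and $\|u_{x}(t)\|_{\Delta d,s+1}=\|v(t)\|_{\Delta d,s+1}$ are controlled by the system-level bound and depend holomorphically on $t$, this identity upgrades the map $t\mapsto u(t)$ to a holomorphic $G^{\Delta d,s+2}$-valued map on $D(0,T_{\Delta}(1-d))$. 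Holomorphy in $t$ is preserved because the map $\partial_{x}\colon G^{\Delta d,s+2}\to G^{\Delta d,s+1}$ is bounded with dense image and the above identity gives an isomorphism between $G^{\Delta d,s+2}$ and the graph of $\partial_{x}$ inside $\oplus^{2}G^{\Delta d,s+1}$.

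The lifespan estimate is inherited from Theorem \ref{thm:system}: by construction $\|(u_{0},v_{0})\|_{\Delta,s+1}=\|(u_{0},u_{0}')\|_{\Delta,s+1}$, so the asymptotics for $T_{\Delta}$ (proportional to $\|(u_{0},u_{0}')\|_{\Delta,s+1}^{-2}$ when $\gamma=0$, and the two-regime behavior when $\gamma\neq 0$) carry over verbatim. The only genuinely delicate point -- and what I regard as the main (mild) obstacle -- is the transfer of uniqueness through the equivalence: one must check that any $G^{\Delta d,s+2}$-valued solution of \eqref{eq:modifiedmuCH-1} gives, after setting $v=u_{x}$, a $\oplus^{2}G^{\Delta d,s+1}$-valued solution of \eqref{eq:modifiedmuCHsystem-1} that satisfies the hypotheses of the uniqueness clause of Theorem \ref{thm:system}. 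This is a direct bookkeeping argument using the boundedness $\partial_{x}\colon G^{\Delta d,s+2}\to G^{\Delta d,s+1}$ and the second-equation derivation already performed in Theorem \ref{thm:equivalence}, so no new estimate is required.
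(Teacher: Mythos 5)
Your proposal is correct and follows exactly the route the paper intends: part (i) is Proposition \ref{prop:analyticity}, and part (ii) is obtained by feeding $v_{0}=\partial_{x}u_{0}$ into Theorem \ref{thm:system} and transferring existence, uniqueness and the lifespan asymptotics back through Theorem \ref{thm:equivalence}. The paper leaves this deduction implicit, and your Fourier-side identity $\|u\|_{\Delta d,s+2}^{2}=\|u\|_{\Delta d,s+1}^{2}+(2\pi)^{-2}\|u_{x}\|_{\Delta d,s+1}^{2}$ is a correct and welcome way of making explicit the upgrade from $G^{\Delta d,s+1}$-valued to $G^{\Delta d,s+2}$-valued holomorphy that the paper takes for granted.
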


\begin{rem}
In \cite{FuYing}, the author solved \eqref{eq:modifiedmuCH-1} in
a space of analytic functions following \cite{HM}. What is new in
the present paper is precise estimates of the lifespan.
\end{rem}

\subsection*{Acknowledgment}

This work was partially supported by JSPS KAKENHI Grant Number 26400127.


\begin{thebibliography}{10}
\bibitem{BHP}R. F. Barostichi, A. A. Himonas and G. Petronilho, Autonomous
Ovsyannikov theorem and applications to nonlocal evolution equations
and systems, \textit{J. Funct. Anal.,} \textbf{270 }(2016), 330-358.

\bibitem{BHPpower}R. F. Barostichi, A. A. Himonas and G. Petronilho,
The power series method for nonlocal and nonlinear evolution equations,
\textit{J. Math. Anal. Appl.,} \textbf{443 }(2016), 834-847.

\bibitem{BHPglobal}R. F. Barostichi, A. A. Himonas and G. Petronilho,
Global analyticity for a generalized Camassa-Holm equation and decay
of the radius of spacial analyticity, \textit{J. Differential Equations.,}
\textbf{263 }(2017), 732-764.

\bibitem{CamassaHolm}R. Camassa and d. Holm, An integrable shallow
water equation with peaked solitons, \emph{Phys. Rev. Lett., }\textbf{71}
(11) (1993), 1661-1664.

\bibitem{CHK}G. M. Coclite, H. Holden and K. H. Karlsen, Well-posedness
of higher-order Camassa-Holm equations, \emph{J. Diff. Equ.}, \textbf{246}
(2009), 929-963.

\bibitem{CourantHilbert}R. Courant and D. Hilbert, \textit{Methods
of Mathematical Physics, II,} New York, N.Y.: Interscience Publishers,
Inc., 1962.

\bibitem{FokasFuchssteiner}A. Fokas and B. Fuchssteiner, Symplectic
structures, their B\"acklund transformations and hereditary symmetries,
\emph{Phys. D, }\textbf{4} (1) 1981/1982, 47-66.

\bibitem{HM}A. A. Himonas and G. Misio\l{}ek, Analyticity of the
Cauchy problem for an integrable evolution equation, \emph{Math. Ann.,
}\textbf{327} (2003), 575-584.

\bibitem{Kato-Masuda}T. Kato and K. Masuda, Nonlinear evolution equations
and analyticity. I., \emph{Ann. Henri }\textit{Poincar\'e}\emph{,
section C}, \textbf{3} (6) (1986), 455-467.

\bibitem{Kato-Ponce}T. Kato and G. Ponce, Commutator estimates and
the Euler and Navier-Stokes equations, \emph{Comm. Pure Appl. Math}.,
\textbf{41} 891-907 (1988).

\bibitem{KLM}B. Khesin, J. Lenells and G. Misio\l{}ek, Generalized
Hunter-Saxton equation and the geometry of the group of circle diffeomorphisms,
\textit{Math. Ann.}, \textbf{342 }(2008), 617-656.

\bibitem{Komatsu}H. Komatsu, A characterization of real analytic
functions, \emph{Proc. Japan Acad}., 36, 90-93 (1960).

\bibitem{Kotake}T. Kotake and M. S. Narasimhan, Regularity theorems
for fractional powers of a linear elliptic operator, \emph{Bull. Soc.
Math. France, }\textbf{90} (1962), 449--471.

\bibitem{LMT}J. Lenells, G. Misio\l{}ek, F. Ti\u{g}lay, Integrable
evolution equations on spaces of tensor densities and their peakon
solutions, \textit{Comm. Math. Phys.,} \textbf{299} (1) (2010) 129--161.

\bibitem{McKean}H. P. McKean, Breakdown of a shallow water equation,
\emph{Asian J. Math}. \textbf{2}. 867-874 (1998)

\bibitem{QFL}C. Qu, Y. Fu and Y. Liu, Well-posedness, wave breaking
and peakons for a modified $\mu$-Camassa-Holm equation, \textit{J.
Funct. Anal.,} \textbf{266 }(2014), 433-477.

\bibitem{FuYing}Y. Fu, A note on the Cauchy problem of a modified
Camassa-Holm equation with cubic nonlinearity, \emph{Discrete Contin.
Dyn. Syst., }\textbf{35} (5) (2015), 2011-2039.

\bibitem{WLQ}F. Wang, F. Li and Z. Qiao, On the Cauchy problem for
a higher-order $\mu$-Camassa-Holm equation, \textit{Discrete Contin.
Dyn. Syst.,} \textbf{38 }(8) (2018), 4163-4187.

\bibitem{WLQ2}F. Wang, F. Li and Z. Qiao, Well-posedness and peakons
for a higher-order \textgreek{m}-Camassa-Holm equation, Nonlinear
Analysis, \textbf{175} (2018), 210-236.

\bibitem{ZY}Z. Zhang and Z. Yin, Global existence for a two-component
Camassa-Holm system with an arbitrary smooth function, \textit{Discrete
Contin. Dyn. Syst.,} \textbf{38 }(11) (2018), 5523-5536.
\end{thebibliography}
\end{document}